\journal{}
\newtheorem{example}{Example}[section]
\newtheorem{remark}{Remark}[section]
\newtheorem{theorem}{Theorem}[section]
\newtheorem{lemma}{Lemma}[section]
\numberwithin{equation}{section}
\DeclareSymbolFont{fouriersymbols}{FMS}{futm}{m}{n}
\DeclareSymbolFont{fourierlargesymbols}{FMX}{futm}{m}{n}
\DeclareMathDelimiter{\tbar}{\mathord}{fouriersymbols}{152}{fourierlargesymbols}{147}
\newcommand{\jump}[1]{\left[\!\left[#1\right]\!\right]}
\newcommand{\bigjump}[1]{\left[\!\!\left[#1\right]\!\!\right]}
\begin{document}

\begin{frontmatter}



\title{Superconvergence of Immersed Finite Element Methods for Interface Problems}

\author{Waixiang Cao\fnref{label1}}
\ead{wxcao@csrc.ac.cn}

\author{Xu Zhang\fnref{label2}}
\ead{xuzhang@math.msstate.edu}

\author{Zhimin Zhang\fnref{label1,label3}}
\ead{zzhang@math.wayne.edu}

\address[label1]{Beijing Computational Science Research Center, Beijing, 100094, China}
\address[label2]{Department of Mathematics and Statistics, Mississippi State University, Mississippi State MS 39762}
\address[label3]{Department of Mathematics, Wayne State University, Detroit, MI 48202}

\fntext[label3]{This work is supported in part by
the China Postdoctoral Science Foundation 2015M570026, and the National Natural Science Foundation of China
(NSFC) under grants No. 91430216, 11471031, 11501026,  and the US
National Science Foundation (NSF) through grant DMS-1419040.}
\begin{abstract}
In this article, we study superconvergence properties of immersed finite element methods for the one dimensional elliptic interface problem. Due to low global regularity of the solution, classical superconvergence phenomenon for finite element methods disappears unless the discontinuity of the coefficient is resolved by partition. We show that immersed finite element solutions inherit all desired superconvergence properties from standard finite element methods without requiring the mesh to be aligned with the interface. In particular, on interface elements, superconvergence occurs at roots of generalized orthogonal polynomials that satisfy both orthogonality and interface jump conditions.
\end{abstract}

\begin{keyword}
superconvergence \sep immersed finite element method \sep interface problems \sep generalized orthogonal polynomials


\end{keyword}

\end{frontmatter}

\section{Introduction}
\label{sec: intro}

Immersed finite element (IFE) methods are a class of finite element methods (FEM) for solving differential equations with discontinuous coefficients, often known as interface problems. Unlike the classical FEM whose mesh is required to be aligned with the interface, IFE methods do not have such restriction. Consequently, IFE methods can use more structured, or even uniform meshes to solve interface problem regardless of interface location. This flexibility is advantageous for problems with complicated interfacial geometry \cite{2010VallaghePapadopoulo} or for dynamic simulation involving a moving interface \cite{2013HeLinLinZhang, 2013LinLinZhang2, 2013LinLinZhang1}.

The main idea of IFE methods is to adapt approximating functions instead of meshes to fit the interface. On elements containing (part of) the interface, which we call interface elements, universal polynomials cannot approximate the solution accurately  because of the low regularity of solution at the interface. A simple remedy is to construct piecewise polynomials as basis functions on interface elements in order to mimic the exact solution. The first IFE method was developed by Li \cite{1998Li} for solving the one-dimensional two-point boundary value problem. Piecewise linear shape functions were constructed on interface elements to incorporate the interface jump conditions. Following this idea, a family of quadratic IFE functions were introduced in \cite{2006CampLinLinSun}. Later in \cite{2007AdjeridLin, 2009AdjeridLin}, Adjerid and Lin extended the IFE approximation to arbitrary polynomial degree, and proved the optimal error estimates in the energy and the $L^2$-norms. In the past decade, IFE methods have also been extensively studied for a variety of interface problems in two dimension \cite{2007GongLiLi, 2004LiLinLinRogers, 2003LiLinWu, 2013LinSheenZhang,2015LinYangZhang1,2015LinYangZhang2} and three dimension \cite{2005KafafyLinLinWang,2010VallaghePapadopoulo}.

There have been many studies in the mathematical theories for IFE methods, for example \cite{2009AdjeridLin, 2010ChouKwakWee, 2008HeLinLin, 1998Li, 2015LinLinZhang}. Most of theoretical analysis focuses on error estimation in Sobolev $H^1$- and $L^2$- norms, but very few literature are concerned with the pointwise convergence. To the best of our knowledge, there is no systematic study on superconvergence phenomenon of IFE methods. Superconvergence theory for classical finite element methods \cite{2001BabuskaStrouboulis,1974DouglasDupont,1995Wahlbin} are invalid for IFE methods, unless the discontinuity of coefficient is resolved by the solution mesh.

Superconvergence phenomena of FEM were discussed as early as 1967 by Zienkiewicz and Cheung \cite{Zienkiewicz-Cheung}.  Later, Douglas and Dupont in \cite{1974DouglasDupont} proved that the $p$-th order $C^0$ finite element method to the two-point boundary value problem converges with rate $O(h^{2p})$ at nodal points.
Since then the superconvergence behavior of FEM has been studied intensively.  We refer to   \cite{Babuska1996,Bramble.Schatz.math.com,Chen.C.M2012,Neittaanmaki1987,V.Thomee.math.comp,1995Wahlbin}  for an incomplete list of references. In the mean time, there also has been considerable interest in studying superconvergence for other numerical methods, for example, spectral and spectral collocation methods \cite{zhang1, zhang2, zhang3},
finite volume methods \cite{Cai.Z1991, Cao;Zhang;Zou2012,Cao;zhang;zou:2kFVM, Chou_Ye2007,Xu.J.Zou.Q2009}, discontinuous Galerkin and local discontinuous Galerkin methods \cite{Adjerid;Massey2006,Cao-Shu-zhang-Yang2D,
   Cao;zhang:supLDG2k+1,Cao;zhang;zou:2k+1,Guo_zhong_Qiu2013,Xie;Zhang2012,Yang;Shu:SIAM2012}.

In this article, we focus on the conforming $p$-th degree IFE methods for the prototypical one-dimensional elliptic interface problem. There are two major contributions in this article. First, we present a novel approach for developing IFE basis functions. The idea is completely different from classical approaches \cite{2007AdjeridLin,2009AdjeridLin}, and the construction is based on the theory of orthogonal polynomials. Our new IFE bases accommodate interface jump conditions, and they satisfy certain orthogonality conditions which will be specified later. These basis functions can be explicitly constructed without solving linear systems. In an interface element, these IFE bases are either polynomials or piecewise polynomials, hence we call them \textit{generalized orthogonal polynomials}.

Next, we analyze superconvergence properties of IFE methods. We will show that superconvergence phenomena occur at the roots of generalized orthogonal polynomials. To be more specific, the convergence rate of $p$-th degree IFE solutions is $O(h^{p+2})$ at nodal points. The accuracy at nodes can be improved to \textit{exact} if the elliptic operator has only the diffusion term. The IFE solution converge to the exact solution with rate $O(h^{p+2})$ at the roots of \textit{generalized Lobatto polynomials}, and the convergence rate of derivatives is escalated to $O(h^{p+1})$ at the roots of \textit{generalized Legendre polynomials}. All the results can be viewed as an extension from the classic result for FEM \cite{1974DouglasDupont}.

The rest of the paper is organized as follows. In Section 2, we recall the IFE methods for interface problems and introduce some notations. In Section 3, we introduce the generalized orthogonal polynomials, based on which we present an explicit approach to construct IFE basis functions. In Section 4, we study the superconvergence properties of IFE methods for interface problems. In Section 5, we report some numerical results. A few concluding remarks are presented in Section 6.

\section{Immersed Finite Element Methods}
Let $\Omega = (a,b)$ be an open interval. Assume that $\alpha\in \Omega$ is an interface point such that $\Omega^- = (a,\alpha)$ and $\Omega^+ = (\alpha,b)$. Consider the following one-dimensional elliptic interface problem
\begin{equation}\label{eq: DE}
  -(\beta u')' + \gamma u' + cu = f, ~~~x\in \Omega^-\cup\Omega^+,
\end{equation}
\begin{equation}\label{eq: BC}
  u(a) = u(b) = 0.
\end{equation}
The diffusion coefficient $\beta$ is assumed to have a finite jump across the interface $\alpha$. Without loss of generality, we assume that $\beta$ is a piecewise constant defined by
\begin{equation}\label{eq: jump}
  \beta(x) =
  \left\{
    \begin{array}{ll}
      \beta^-, & \text{if}~ x\in\Omega^-, \\
      \beta^+, & \text{if}~ x\in\Omega^+,
    \end{array}
  \right.
\end{equation}
where $\min\{\beta^+,\beta^-\}>0$. The coefficients $\gamma$ and $c$ are assumed to be constants. At the interface $\alpha$, the solution is assumed to satisfy the interface jump conditions
\begin{equation}\label{eq: jump condition}
  \jump{u(\alpha)} = 0,~~~
  \bigjump{\beta u'(\alpha)} = 0,
\end{equation}
where $\jump{v(\alpha)} := v(\alpha^+) - v(\alpha^-)$. Denote the ratio of coefficient jump by $\rho = \frac{\beta_{\max}}{\beta_{\min}}$ where $\beta_{\max} = \max\{\beta^+,\beta^-\}$,  $\beta_{\min} = \min\{\beta^+,\beta^-\}$\\

Throughout this article, we use standard notation of Sobolev spaces. We will also need to develop a few new spaces that characterize the interface problems. We define for $m\ge 1$ and $q\ge 1$ the Sobolev space
\begin{eqnarray}
  \tilde W_\beta^{m,q}(\Omega)= \Big\{v\in C(\Omega)\!\!\!\!&:&\!\!\! v|_{\Omega^\pm}\in W^{m,q}(\Omega^\pm),~v|_{\partial\Omega}=0, \nonumber\\
   && \!\!\!\bigjump{\beta v^{(j)}(\alpha)} = 0,~ j = 1,2,\cdots, m\Big\}, \label{eq: space2}
\end{eqnarray}
equipped the norm and semi-norm
\begin{equation*}
\|v\|_{m,q,\Omega}^q=\|v\|_{m,q,\Omega^-}^q+\|v\|_{m,q,\Omega^+}^q,~~~
|v|_{m,q,\Omega}^q=|v|_{m,q,\Omega^-}^q+|v|_{m,q,\Omega^+}^q,
\end{equation*}
for $q<\infty$, and 
\begin{equation*}
\|v\|_{m,\infty,\Omega}=\max\{\|v\|_{m,\infty,\Omega^-},\|v\|_{m,\infty,\Omega^+}\},~~~
|v|_{m,\infty,\Omega}=\max\{|v|_{m,\infty,\Omega^-},|v|_{m,\infty,\Omega^+}\}.
\end{equation*}

On a subset $\Lambda\subset \Omega$ that contains the interface point $\alpha$, we define
\[
     \|v\|_{m,q,\Lambda}^q=\|v\|_{m,q,\Lambda^-}^q+\|v\|_{m,q,\Lambda^+}^q,\ \  
     |v|_{m,q,\Lambda}^q=|v|_{m,q,\Lambda^-}^q+|v|_{m,q,\Lambda^+}^q,
 \] 
where $\Lambda^\pm = \Lambda\cap\Omega^\pm$. If $\Lambda=\Omega$, we usually write $\|\cdot\|_{m,q}$ instead of $\|\cdot\|_{m,q,\Omega}$ for simplicity. In addition, if $q=2$, we simply write $\|\cdot\|_{m}$ instead of $\|\cdot\|_{m,q}$. \\
 
Next, we recall the main idea of the immersed finite element methods (IFEM) for interface problem \eqref{eq: DE} - \eqref{eq: jump condition}. Consider the following interface-independent partition of $\Omega$:
\begin{equation}\label{eq: partition}
  a = x_0 < x_1< \cdots <x_{k-1}\leq\alpha\leq x_{k}<\cdots<x_N = b.
\end{equation}
Based on the partition \eqref{eq: partition}, we define a mesh $\mathcal{T}_h = \{\tau_i\}_{i = 1}^N$, where $\tau_i = (x_{i-1},x_i)$. Denoted by $h_i = x_i-x_{i-1}$ the size of the element $\tau_i$, and by $h=\max\{h_i,i=1,\cdots, N\}$ the mesh size of $\mathcal{T}_h$. Note that the interface $\alpha$ is located in the element $\tau_k$, which we call the interface element. The rest of elements $\tau_i$, $i\ne k$ are called noninterface elements. If the interface $\alpha$ coincides with the mesh point $x_{k-1}$ or $x_{k}$, then the partition \eqref{eq: partition} becomes interface-fitted; hence there is no difference between the IFEM and standard FEM. 

Standard polynomials are used to as basis functions on all noninterface elements. To be more specific, we use the standard Lobatto polynomials as bases. The $p$-th degree FE space on the noninterface element $\tau_i$ is the standard polynomial space of degree $p$,  denoted by ${\mathbb{P}}_p(\tau_i)$. On the interface element $\tau_k$, we construct new IFE basis functions using the generalized Lobatto polynomials (will be defined in \eqref{eq: lobatto 0} - \eqref{eq: lobatto n}). The corresponding $p$-th degree IFE space on $\tau_k$ is denoted by $\tilde{\mathbb{P}}_p(\tau_k)$ shall be defined in \eqref{eq: interface Space} .

We define the $p$-th degree global IFE space on the mesh $\mathcal{T}_h$ by
\begin{equation*}
S_p(\mathcal{T}_h) = \{v\in H^1_0(\Omega): v|_{\tau_i}\in \mathbb{P}_p(\tau_i), \forall i\neq k;~ v|_{\tau_k}\in \tilde{\mathbb{P}}_p(\tau_k)\}
\end{equation*}
The IFEM for \eqref{eq: DE}-\eqref{eq: jump condition} is to find $u_h\in S_p(\mathcal{T}_h) $ such that
\begin{equation*}
(\beta u_h',v_h') + (\gamma u_h', v_h)  + (cu_h,v_h) = (f,v_h),~~~\forall v_h\in S_p(\mathcal{T}_h) ,
\end{equation*}
where $(\cdot,\cdot)$ is the standard $L^2$ inner product on $(a,b)$.

\section{Generalized Orthogonal Polynomials}
In this section, we recall standard Legendre and Lobatto polynomials, and use them as basis functions on noninterface elements. Next, we construct the generalized orthogonal polynomials to be used as basis functions on interface elements. 

\subsection{Standard Orthogonal Polynomials}
As usual, we construct basis functions on the reference interval $\tau = [-1,1]$, then map them to each physical element $\tau_i$ by appropriate affine mapping. Let $P_n(\xi)$ be the Legendre polynomial of degree $n$ on $\tau$ defined by
\begin{equation*}
P_n(\xi) = \frac{1}{2^n n!}\frac{d^n}{d\xi^n}\big[(\xi^2-1)^n\big]
\end{equation*}
Legendre polynomials satisfy the following orthogonality
\begin{equation}\label{eq: Legendre orthogonality}
  \int_{-1}^1 P_m(\xi)P_n(\xi) d\xi = \frac{2}{2n+1}\delta_{mn}.
\end{equation}
Define $\{\psi_n\}$ {to be} the family of Lobatto polynomials on $\tau =[-1,1]$,
\begin{equation}\label{eq: Labotto Poly}
  \psi_0(\xi) = \frac{1-\xi}{2},~~~
  \psi_1(\xi) = \frac{1+\xi}{2},~~~
  \psi_n(\xi) = \int_{-1}^\xi P_{n-1}(t)dt, ~~~ n\geq 2.
\end{equation}

\subsection{Generalized Orthogonal Polynomials}
On the interface element $\tau_k$ containing $\alpha$, we construct a sequence of polynomials satisfying both orthogonality and interface jump conditions. Again, we map $\tau_k$ to the reference interval $\tau = [-1,1]$ containing the reference interface point $\hat\alpha$. Let $\hat\beta(\xi) = \beta(x)$ such that $\hat\beta(\xi) = \beta^-$ on $\tau^-=(-1,\hat\alpha)$ and $\hat\beta(\xi) = \beta^+$ on $\tau^+=(\hat\alpha,1)$.

Define a sequence of orthogonal polynomials $\{L_n\}$ with the weight function $w(\xi) = \frac{1}{\hat\beta(\xi)}$, \emph{i.e.},
\begin{equation}\label{eq: general Legredre orthogonality}
  (L_n,L_m)_w :=\int_{-1}^1 w(\xi) L_n(\xi) L_m(\xi) d\xi = c_n\delta_{mn},
\end{equation}
where $c_n = \|L_n\|_w^2 = (L_n,L_n)_w$. If we require $\{L_n\}$ to be monic polynomials, then they can be uniquely constructed via the following three-term recurrence formula (\cite{2011ShenTangWang}, Theorem 3.1):
\begin{remark}
Let $\{{L}_n\}$ be the family of monic orthogonal polynomials satisfying \eqref{eq: general Legredre orthogonality}. Then $\{{L}_n\}$ can be constructed as follows
\begin{equation}\label{eq: monic generalized Legendre poly 0 1}
  {L}_0(\xi) = 1,~~~{L}_1(\xi) = \xi - a_0, ~~~
\end{equation}
\begin{equation}\label{eq: monic generalized Legendre poly}
  {L}_{n+1}(\xi) = (\xi-a_n){L}_n(\xi) - b_n{L}_{n-1}(\xi),~ ~n\ge 1,
\end{equation}
where
\begin{eqnarray*}
  a_n &=& \frac{(\xi{L}_n,{L}_n)_w}{({L}_n,{L}_n)_w},~~~n\geq 0 \\
  b_n &=& \frac{({L}_n,{L}_n)_w}{({L}_{n-1},{L}_{n-1})_w},~~~n\geq 1.
\end{eqnarray*}
\end{remark}
The polynomials $\{L_n\}$ are generalized from standard Legendre polynomials $\{P_n\}$ by allowing the weight function to be discontinuous. Hence, we call $\{L_n\}$ the \textit{generalized Legendre polynomials}.

Next, we define a sequence of piecewise polynomial $\{\phi_n\}$ in a similar manner as \eqref{eq: Labotto Poly}
\begin{eqnarray}
  \phi_0(\xi) &=& \left\{\begin{array}{ll}
      \frac{(1-\hat\alpha)\beta^-+(\hat\alpha-\xi)\beta^+}{(1-\hat\alpha)\beta^-+(1+\hat\alpha)\beta^+}, & \text{in}~\tau^-, \vspace{1mm}\\
      \frac{(1-\xi)\beta^-}{(1-\hat\alpha)\beta^-+(1+\hat\alpha)\beta^+}, & \text{in}~ \tau^+.
    \end{array}\right. \label{eq: lobatto 0}\\
  \phi_1(\xi) &=& \left\{
  \begin{array}{ll}
      \frac{(1+\xi)\beta^+}{(1-\hat\alpha)\beta^-+(1+\hat\alpha)\beta^+}, & \text{in}~\tau^-, \vspace{1mm}\\
      \frac{(\xi-\hat\alpha)\beta^-+(1+\hat\alpha)\beta^+}{(1-\hat\alpha)\beta^-+(1+\hat\alpha)\beta^+}, & \text{in}~\tau^+.
    \end{array}\right. \label{eq: lobatto 1}\\
  \phi_{n}(\xi) &=&  \int_{-1}^\xi w(t) L_{n-1}(t)dt,~~~n = 2,3,\cdots\label{eq: lobatto n}
\end{eqnarray}
Note that $\phi_0$ and $\phi_1$ are constructed to fulfill nodal value conditions
\begin{equation*}
  \phi_0(-1) = 1, ~~\phi_0(1) = 0,~~
  \phi_1(-1) = 0, ~~\phi_1(1) = 1.
\end{equation*}
and the interface jump condition \eqref{eq: jump condition}. In fact, $\phi_0$ and $\phi_1$ are piecewise linear polynomials, and they are exactly the two Lagrange type IFE nodal basis functions (see \cite{2009AdjeridLin, 1998Li}).

\begin{theorem}
$\{\phi_n\}$ is a sequence of piecewise polynomials and satisfy
\begin{itemize}
  \item the interface jump conditions
\begin{equation}\label{eq: interface condition}
  \jump{\phi_n(\hat\alpha)} = 0,~~~~\bigjump{\hat\beta\phi_n'(\hat\alpha)} = 0, ~~~\forall n\geq 0,
\end{equation}
  \item the weighted orthogonality condition
\begin{equation}\label{eq: general orthogonal condition}
  \langle \phi_m,\phi_n\rangle_{\hat\beta} := \int_{-1}^1 \hat\beta(\xi) \phi_m'(\xi)\phi_n'(\xi)d\xi = \tilde c_n\delta_{mn},~~~\forall m,n\ge 1,
\end{equation}
where $\tilde c_n$ is some nonzero constant.
\end{itemize}
\end{theorem}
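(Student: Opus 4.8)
The plan is to prove the two claims in turn, the crucial observation being that the weighted derivative $\hat\beta\phi_n'$ collapses to a single (non-piecewise) polynomial proportional to $L_{n-1}$ for every $n\ge 1$.

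First I would dispose of the interface jump conditions. For $n=0,1$ the shape functions are the explicit piecewise-linear expressions \eqref{eq: lobatto 0}--\eqref{eq: lobatto 1}, so both conditions follow by direct substitution: one checks that the two branches of $\phi_0$ (resp.\ $\phi_1$) agree at $\hat\alpha$, and that $\hat\beta\phi_0'$ (resp.\ $\hat\beta\phi_1'$) takes the same constant value $-\frac{\beta^-\beta^+}{D}$ (resp.\ $\frac{\beta^-\beta^+}{D}$) on both sides of $\hat\alpha$, where $D=(1-\hat\alpha)\beta^-+(1+\hat\alpha)\beta^+$. For $n\ge 2$, since $\phi_n(\xi)=\int_{-1}^\xi w(t)L_{n-1}(t)\,dt$ is the antiderivative of a bounded function, $\phi_n$ is continuous at $\hat\alpha$, giving $\jump{\phi_n(\hat\alpha)}=0$ at once. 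For the flux condition I differentiate: $\phi_n'=wL_{n-1}=\hat\beta^{-1}L_{n-1}$, hence $\hat\beta\phi_n'=L_{n-1}$, which is a genuine polynomial and therefore continuous at $\hat\alpha$; this yields $\bigjump{\hat\beta\phi_n'(\hat\alpha)}=0$.

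The second claim rests on recording the identity $\hat\beta\phi_n'=\kappa_n L_{n-1}$, equivalently $\phi_n'=\kappa_n w L_{n-1}$, for all $n\ge 1$, where $\kappa_n=1$ for $n\ge 2$ by the computation just made, and $\kappa_1=\frac{\beta^-\beta^+}{D}$ with $L_0\equiv 1$ from the explicit form of $\phi_1$. With this in hand, for $m,n\ge 1$ I would write
\[
\langle\phi_m,\phi_n\rangle_{\hat\beta}=\int_{-1}^1(\hat\beta\phi_m')\,\phi_n'\,d\xi=\kappa_m\kappa_n\int_{-1}^1 w\,L_{m-1}L_{n-1}\,d\xi=\kappa_m\kappa_n\,(L_{m-1},L_{n-1})_w,
\]
and invoke the weighted orthogonality \eqref{eq: general Legredre orthogonality} of the $\{L_n\}$ to conclude $\langle\phi_m,\phi_n\rangle_{\hat\beta}=\kappa_m\kappa_n c_{m-1}\delta_{mn}$. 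Setting $\tilde c_n=\kappa_n^2 c_{n-1}$ completes the orthogonality; positivity of $\beta^\pm$, of $D$, and of $c_{n-1}=\|L_{n-1}\|_w^2$ guarantees $\tilde c_n\neq 0$.

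The only genuinely non-routine point is the case involving $\phi_1$: unlike the $n\ge 2$ functions it is not built as an integral of $L_{n-1}$, so one must check by hand that the piecewise-linear flux $\hat\beta\phi_1'$ is actually constant across $\hat\alpha$ and hence a scalar multiple of $L_0$. I expect this small verification to be the main thing to get right, since once the identity $\hat\beta\phi_n'=\kappa_n L_{n-1}$ is established uniformly for all $n\ge 1$, the rest reduces cleanly to the orthogonality of the generalized Legendre polynomials.
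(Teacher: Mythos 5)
Your proof is correct and follows essentially the same route as the paper: direct verification for $\phi_0,\phi_1$, continuity of the integral plus polynomiality of $\hat\beta\phi_n'=L_{n-1}$ for $n\ge 2$, and reduction of the weighted orthogonality to \eqref{eq: general Legredre orthogonality}. Your explicit tracking of the constant $\kappa_1$ that makes $\hat\beta\phi_1'$ a multiple of $L_0$ is in fact slightly more careful than the paper's one-line orthogonality computation, which applies $\langle\phi_m,\phi_n\rangle_{\hat\beta}=(L_{m-1},L_{n-1})_w$ without comment in the case $m=1$ or $n=1$.
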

\begin{proof}
We first prove the interface jump conditions \eqref{eq: interface condition}. It is true for $\phi_0$ and $\phi_1$ by direct verification using \eqref{eq: lobatto 0} and \eqref{eq: lobatto 1}. For $n\ge 2$, we note that $\phi_n$ is continuous because it is defined through the integral \eqref{eq: lobatto n}. Moreover, since $\{L_n\}$ is a sequence of polynomials, then
\begin{equation*}
  \bigjump{\hat\beta\phi_n'(\hat\alpha)} = \beta^+\phi_n'(\hat\alpha+) - \beta^-\phi_n'(\hat\alpha-) =  L_{n-1}(\hat\alpha+) - L_{n-1}(\hat\alpha-) = 0.
\end{equation*}

The orthogonality \eqref{eq: general orthogonal condition} follows from \eqref{eq: general Legredre orthogonality} and \eqref{eq: lobatto n}, \emph{i.e.},
\begin{equation*}
  \langle \phi_m,\phi_n\rangle_{\hat\beta} = (L_{m-1}, L_{n-1})_w = c_{n-1} \delta_{m-1,n-1} = \tilde c_n\delta_{mn}.
\end{equation*}
\end{proof}

The piecewise polynomials $\{\phi_n\}$ are generalized from standard Lobatto polynomials $\{\psi_n\}$ defined in \eqref{eq: Labotto Poly}. The construction \eqref{eq: lobatto n} uses piecewise constant weight function $w(\xi) = \frac{1}{\hat\beta(\xi)}$ instead of a universal constant one. We call $\{\phi_n\}$ the \textit{generalized Lobatto polynomials}.

The generalized Lobatto polynomials $\{\phi_n\}$ form a sequence of IFE basis functions satisfying both interface jump conditions and orthogonal conditions. In Figure \ref{fig: lobatto IFE basis}, we plot a few generalized Legendre polynomials $L_n$ and generalized Lobatto polynomials $\phi_n$ for the configuration of $\hat\alpha = 0.15$ and $\hat\beta = \{1,5\}$. In Figure \ref{fig: lobatto IFE basis 2pt}, we plot the generalized polynomials for multiple (two) interface points $\hat\alpha = -0.15$ and $0.4$. The coefficient $\hat\beta$ has three pieces in this case, \emph{i.e.},  $\hat\beta = \{1,5,3\}$.
\begin{figure}[thb]
  \centering
  \includegraphics[width=.49\textwidth]{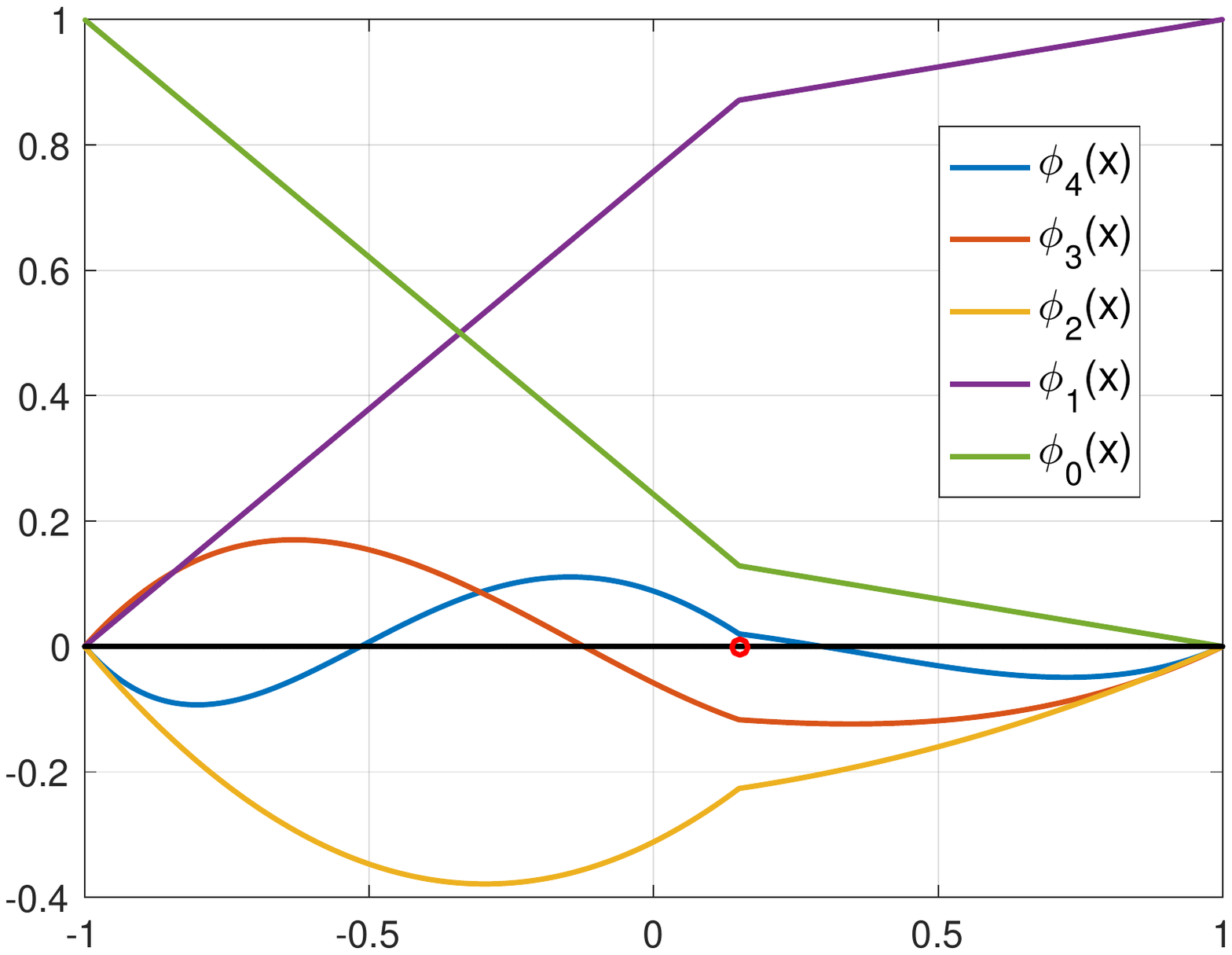}
  \includegraphics[width=.49\textwidth]{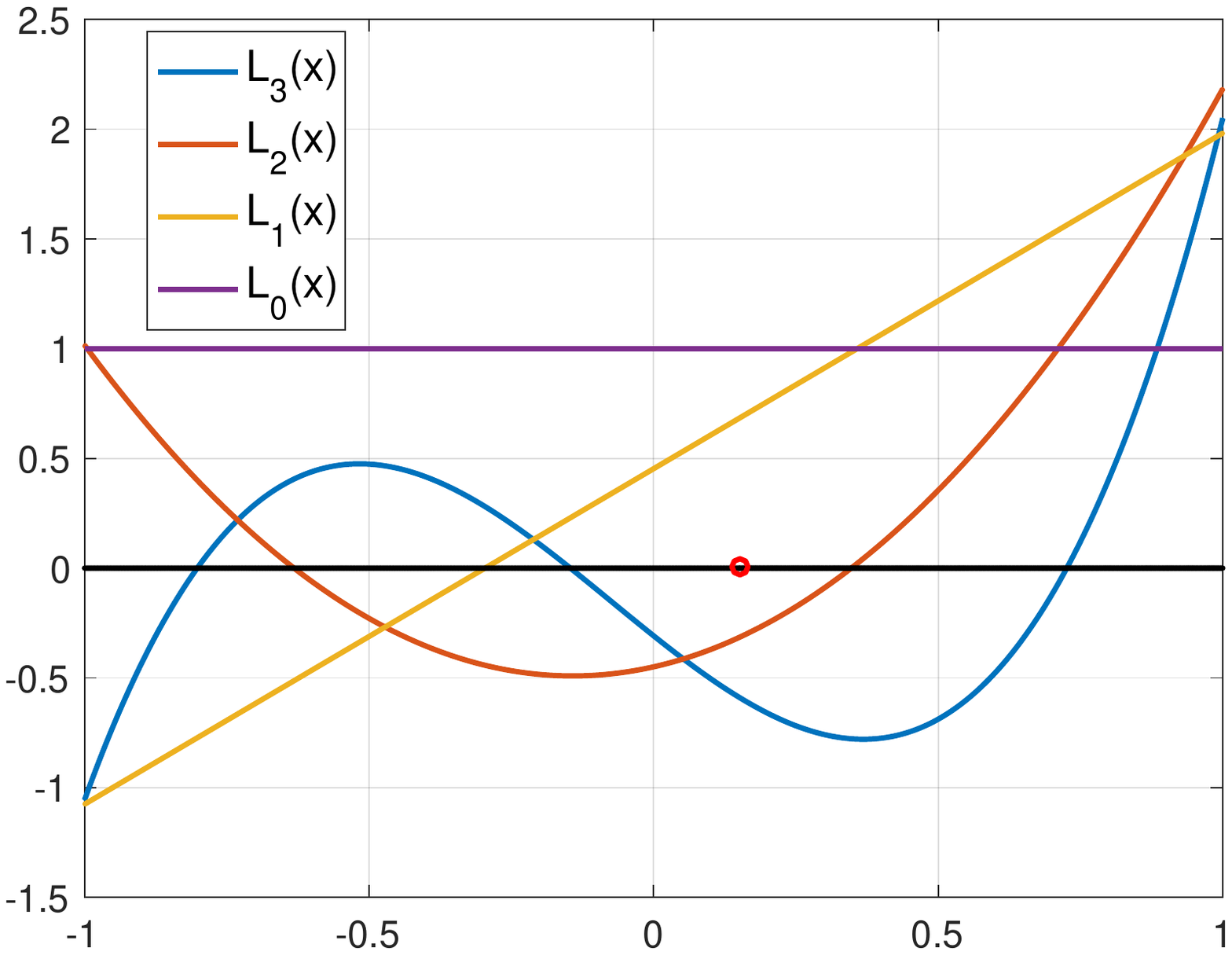}\\
  \caption{Generalized Lobatto (left) and Legendre (right) polynomials with one interface point}
  \label{fig: lobatto IFE basis}
\end{figure}

\begin{figure}[thb]
  \centering
  \includegraphics[width=.49\textwidth]{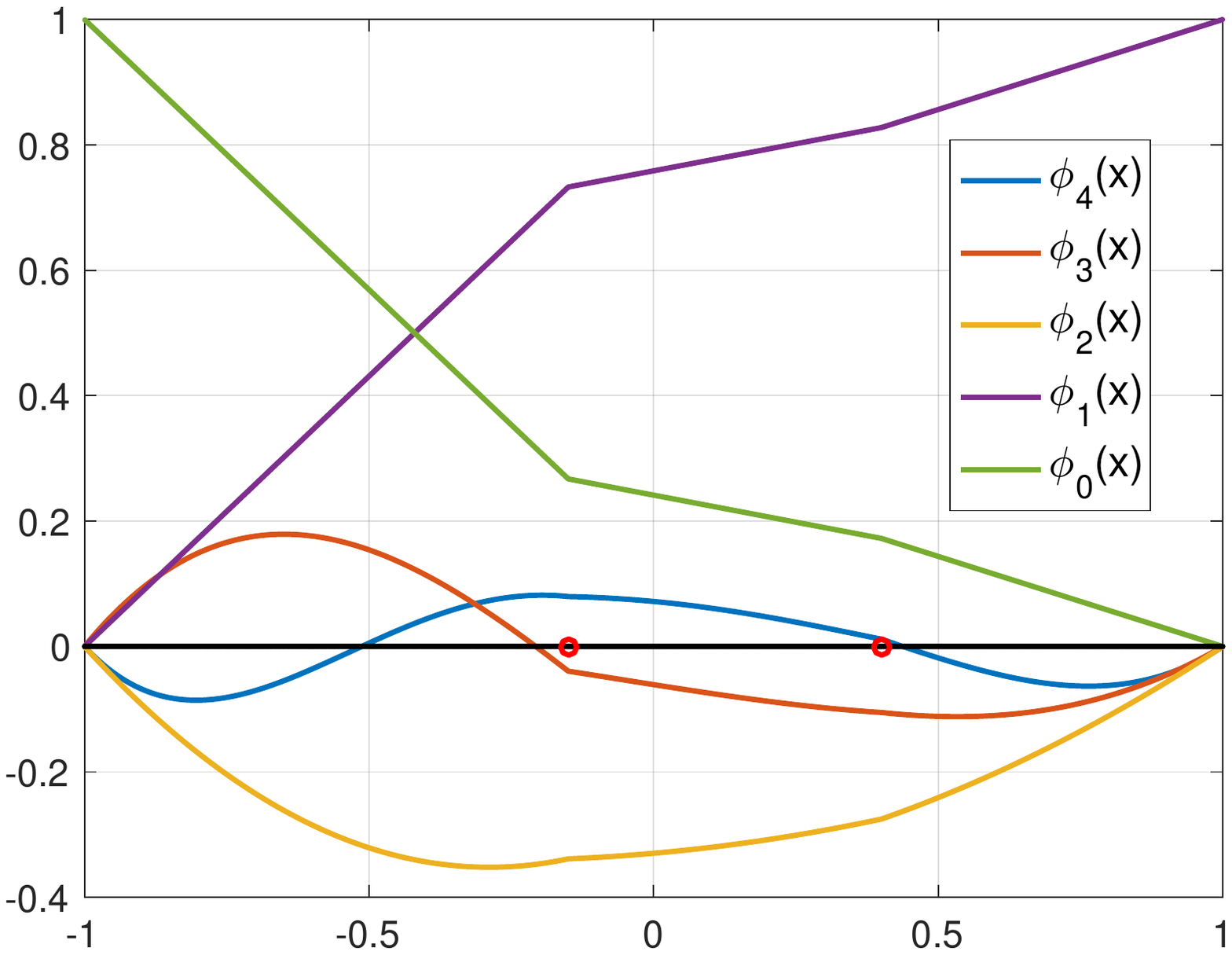}
  \includegraphics[width=.49\textwidth]{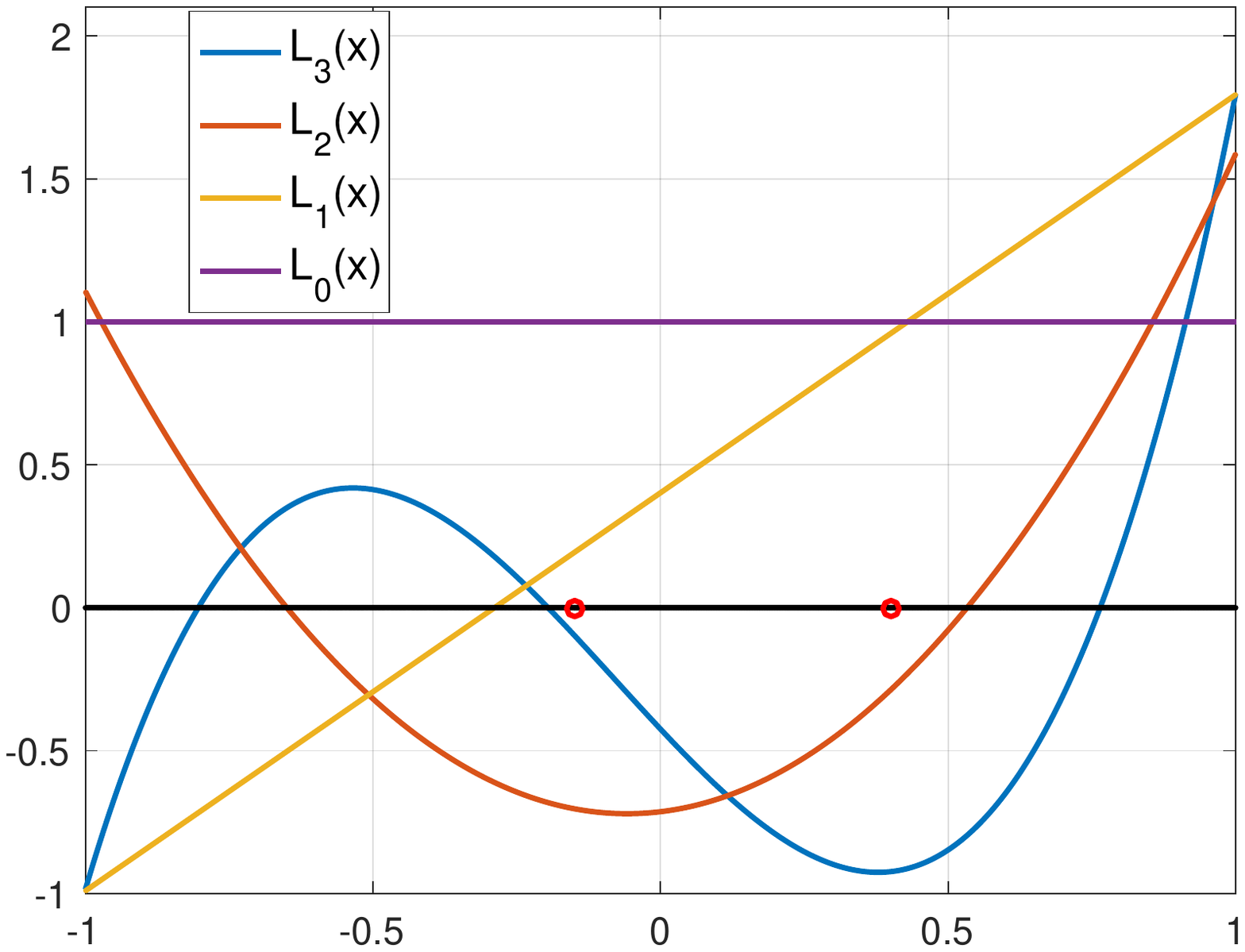}\\
  \caption{Generalized Lobatto (left) and Legendre (right) polynomials with two interface points}
  \label{fig: lobatto IFE basis 2pt}
\end{figure}

\begin{remark}
The generalized Lobatto polynomials $\{\phi_n\}$ are identical (up to a multiple constant) to IFE basis functions introduced in \cite{2007AdjeridLin}. However, the construction in this article is more explicit and does not require solving a linear system. This procedure is more advantageous when there are multiple discontinuities in an interval.
\end{remark}

\begin{remark}
In the construction procedure of $\phi_n$, we did not impose the \textit{extended} interface jump conditions \cite{2007AdjeridLin}:
\begin{equation}\label{eq: extra jump condition}
  \bigjump{\hat\beta \phi_n^{(j)}(\hat\alpha)} = 0,~~~~\forall~ j = 2,3, \cdots, n.
\end{equation}
However, it can be easily verified that all the generalized Lobatto polynomials $\{\phi_n\}$ satisfy \eqref{eq: extra jump condition} automatically.
\end{remark}

We can obtain the local FE basis functions $\psi_{i,n}$ on each noninterface element $\tau_i$ and the IFE basic functions $\phi_{k,n}$ on the interface element $\tau_k$ by the following affine mappings,
\begin{equation}\label{eq: Lobatto Noninterface}
  \psi_{i,n}(x) := \psi_n(\xi) = \psi_n\left(\frac{2x-x_{i-1}-x_i}{h_{i}}\right),~~~~n \ge 0.
\end{equation}
\begin{equation}\label{eq: Lobatto interface}
  \phi_{k,n}(x) := \phi_n(\xi) = \phi_n\left(\frac{2x-x_{k-1}-x_k}{h_{k}}\right),~~~~n \ge 0.
\end{equation}

Then the $p$-th degree local FE space $\mathbb{P}_p(\tau_i) $ on noninterface elements $\tau_i$, $i\neq k$, and IFE space $\tilde{\mathbb{P}}_p(\tau_k)$ on interface element $\tau_k$ are defined by
\begin{equation}\label{eq: Noninterface Space}
  \mathbb{P}_p(\tau_i) = \text{span}\{\psi_{i,n}: n = 0,1,\cdots, p\}.
\end{equation}
\begin{equation}\label{eq: interface Space}
  \tilde{\mathbb{P}}_p(\tau_k) = \text{span}\{\phi_{k,n}: n = 0,1,\cdots, p\}.
\end{equation}
Finally, the $p$-th degree global IFE space is defined by
\begin{equation}\label{eq: IFE space}
  S_p(\mathcal{T}_h) := \{v\in H_0^1(\Omega): v|_{\tau_i} \in \mathbb{P}_p(\tau_i), ~i\ne k, ~\text{and}~ v|_{\tau_k} \in \tilde{\mathbb{P}}_p(\tau_k)\}.
\end{equation}
The IFEM for the interface problem \eqref{eq: DE} - \eqref{eq: jump condition} is: find $u_h\in S_p(\mathcal{T}_h)$ such that
\begin{equation}\label{eq: IFE method}
  a(u_h,v_h):= (\beta u_h', v_h')+ (\gamma u_h',v_h) + (cu_h,v_h) = (f,v_h),~~~~\forall v_h\in S_p(\mathcal{T}_h).
\end{equation}

\subsection{Properties of Generalized Orthogonal Polynomials}
In this subsection, we investigate some fundamental properties of the generalized orthogonal polynomials.

First, it is interesting to know the number and distribution of zeros for the generalized Lobatto polynomials and generalized Legendre polynomials in the interval $[-1,1]$. To prove our main result, we need the following lemma.
\begin{lemma} (Generalized Rolle's theorem)
\label{lemma: gen Rolle}
Assume that the function $f$ is real-valued and continuous on a closed interval $[a,b]$ with $f(a)=f(b)$. If for every $x$ in the open interval $(a,b)$, both of one side limits
\begin{equation*}
  f'(x+) = \lim_{h\to 0^+}\frac{f(x+h)-f(x)}{h},~~~~
  f'(x-) = \lim_{h\to 0^-}\frac{f(x+h)-f(x)}{h}
\end{equation*}
exist, 
then there is some number $c$ in the open interval $(a,b)$ such that one of the two limits $f'(c+)$ and $f'(c-)$ is $\ge 0$ and the other is $\leq 0$.
\end{lemma}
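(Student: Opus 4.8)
The plan is to mimic the classical proof of Rolle's theorem, replacing the step where the two-sided derivative vanishes at an interior extremum with a sign analysis of the two one-sided difference quotients. First I would invoke continuity of $f$ on the compact interval $[a,b]$ to conclude that $f$ attains both its maximum $M$ and its minimum $m$ at points of $[a,b]$.

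Next I would dispose of the trivial case. If $f$ is constant on $[a,b]$, then for every interior point $c$ both one-sided limits equal $0$, and the conclusion holds with any choice of $c$. Otherwise $f$ is non-constant, and since $f(a)=f(b)$, at least one of the strict inequalities $M > f(a)$ or $m < f(a)$ must hold. In the first case the maximum cannot be attained at an endpoint, so there is an interior point $c\in(a,b)$ with $f(c)=M$; in the second case the minimum is attained at an interior point. This step, locating an \emph{interior} extremum, is where the hypothesis $f(a)=f(b)$ is essential, and it is the crux of the argument.

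Finally I would carry out the sign analysis at the interior extremum $c$. Suppose $c$ is an interior maximum. For $h>0$ small enough that $c+h\le b$, we have $f(c+h)-f(c)\le 0$, so the difference quotient $\frac{f(c+h)-f(c)}{h}\le 0$; passing to the limit $h\to 0^+$ (which exists by hypothesis) yields $f'(c+)\le 0$. For $h<0$ the numerator is still $\le 0$ while the denominator is now negative, so the quotient is $\ge 0$, and the limit $h\to 0^-$ gives $f'(c-)\ge 0$. Thus $f'(c-)\ge 0\ge f'(c+)$, so one of the two one-sided limits is $\ge 0$ and the other $\le 0$, as required. The case of an interior minimum is symmetric: the same computation there gives $f'(c-)\le 0\le f'(c+)$.

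I do not foresee any serious obstacle. The only points requiring care are that the weak inequalities on the difference quotients are preserved under the limit (which they are, since the one-sided limits are assumed to exist) and the bookkeeping of signs coming from the sign of $h$ in the denominator. The generalization away from genuine differentiability costs us only that we cannot conclude $f'(c)=0$, which is precisely why the statement is phrased in terms of opposite signs of the two one-sided derivatives rather than the vanishing of a single derivative.
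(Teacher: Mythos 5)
Your proof is correct and is exactly the argument the paper has in mind: the authors omit the proof, remarking only that it is ``straightforward and similar to the standard Rolle's theorem,'' and your route---extreme value theorem, reduction to an interior extremum via $f(a)=f(b)$, then the sign analysis of the two one-sided difference quotients---is precisely that standard adaptation.
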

The above lemma generalizes the Rolle's theorem to functions that are continuous on $[a,b]$, but not necessarily differentiable at all interior points of $(a,b)$. The proof is straightforward and similar to the standard Rolle's theorem; hence we omit it in this article. 

\begin{theorem}\label{theo2:2}
The generalized Legendre polynomials $\{L_n\}$ and generalized Lobatto polynomials $\{\phi_n\}$ have the same numbers of roots as the standard Legendre polynomials $\{P_n\}$ and Lobatto polynomials $\{\psi_n\}$, respectively, \emph{i.e.},
\begin{enumerate}
  \item For $n\geq 1$, $L_n$ has $n$ simple roots in the open interval $(-1,1)$.
  \item For $n\ge 1$, $\phi_{n+1}(\pm 1) = 0$, and $\phi_{n+1}$ has $n-1$ simple ``roots" in the open interval $(-1,1)$, \emph{i.e.}, the piecewise polynomial $\phi_{n+1}(\xi)$ crosses the $\xi$-axis $n-1$ times in $(-1,1)$.
\end{enumerate}
\end{theorem}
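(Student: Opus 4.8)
The plan is to handle the two parts separately, leaning throughout on the positivity of the weight $w(\xi)=1/\hat\beta(\xi)$ so that the discontinuity of $\hat\beta$ never interferes with the orthogonal-polynomial machinery. For part (1), since $\{L_n\}$ are genuine polynomials of degree $n$ and $w>0$ is integrable, I would run the classical sign-change argument verbatim. Suppose $L_n$ changes sign at only $m<n$ points $y_1<\cdots<y_m$ in $(-1,1)$; set $q(\xi)=\prod_{j=1}^m(\xi-y_j)$ with $\deg q=m<n$, and note that $L_n q$ keeps one sign on $(-1,1)$, so $\int_{-1}^1 w\,L_n q\,d\xi\neq0$. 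This contradicts \eqref{eq: general Legredre orthogonality}, which forces $\int_{-1}^1 w\,L_n q\,d\xi=0$ because $\deg q<n$. Hence $L_n$ has at least $n$ sign changes, and being of degree $n$ it has exactly $n$ simple roots, all in $(-1,1)$.

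For part (2) I would first record the endpoint values. By \eqref{eq: lobatto n}, $\phi_{n+1}(-1)=0$ trivially, while $\phi_{n+1}(1)=\int_{-1}^1 w\,L_n\,d\xi=(L_n,L_0)_w=0$ for $n\ge1$, since $L_0\equiv1$ and $L_n\perp_w L_0$. Next I would exploit that $\phi_{n+1}'=w\,L_n$ away from $\hat\alpha$; because $w>0$, the one-sided derivatives $w(\hat\alpha\pm)L_n(\hat\alpha)$ both carry the sign of $L_n(\hat\alpha)$, so the only points at which the two one-sided derivatives of $\phi_{n+1}$ can have opposite signs are the $n$ roots $z_1<\cdots<z_n$ of $L_n$ from part (1).

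The upper bound then follows from Lemma \ref{lemma: gen Rolle}. If $\phi_{n+1}$ has $r$ interior sign changes $\xi_1<\cdots<\xi_r$, then together with $\pm1$ it has $r+2$ zeros; applying the generalized Rolle theorem on each of the $r+1$ consecutive subintervals produces a point where the one-sided derivatives have opposite signs, which by the previous remark must be a root of $L_n$. These roots lie in distinct open subintervals, so $r+1\le n$, i.e. $r\le n-1$.

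The crux is the matching lower bound $r\ge n-1$, and this is where I expect the main obstacle: the naive test used in part (1) fails, because the weight $w$ together with the endpoint constraints prevents a direct orthogonality contradiction for $\phi_{n+1}$ itself. The trick I would use is to integrate by parts so that the identity $\phi_{n+1}'=w\,L_n$ reappears and absorbs the weight. Assuming for contradiction that $r\le n-2$, set $\Pi(\xi)=\prod_{j=1}^r(\xi-\xi_j)$, so that $\phi_{n+1}\Pi$ keeps one sign and $\int_{-1}^1\phi_{n+1}\Pi\,d\xi\neq0$, and let $R(\xi)=\int_{-1}^\xi\Pi(t)\,dt$, a polynomial of degree $r+1\le n-1$ with $R(-1)=0$. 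Integrating by parts and using $\phi_{n+1}(\pm1)=0$ gives $\int_{-1}^1\phi_{n+1}\Pi\,d\xi=-\int_{-1}^1 w\,L_n\,R\,d\xi$, whose right-hand side vanishes by \eqref{eq: general Legredre orthogonality} since $\deg R\le n-1$. This contradiction yields $r\ge n-1$, hence $r=n-1$. Finally, comparing the $n+1$ zeros of $\phi_{n+1}$ (the $n-1$ crossings together with $\pm1$) against the $n$ roots of $L_n$, generalized Rolle places exactly one $z_i$ strictly between consecutive zeros, so no crossing coincides with a root of $L_n$; consequently the one-sided derivatives of $\phi_{n+1}$ are nonzero at each crossing, and the $n-1$ interior roots are simple.
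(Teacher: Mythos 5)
Your proof is correct and follows essentially the same route as the paper: part (1) is the classical sign-change argument for weighted orthogonal polynomials, and part (2) rests on the endpoint identity $\phi_{n+1}(1)=(L_n,L_0)_w=0$, the orthogonality of $\phi_{n+1}$ to $\mathbb{P}_{n-2}$ obtained by integrating by parts through $\hat\beta\phi_{n+1}'=L_n$, and Lemma \ref{lemma: gen Rolle} to cap the number of crossings by counting roots of $L_n$ — exactly the two contradiction cases $k<n-1$ and $k>n-1$ in the paper, merely reorganized as matching lower and upper bounds. The only cosmetic differences are that you antidifferentiate the test polynomial rather than differentiate the $\phi_m$ to get the key orthogonality, and you add an explicit (and welcome) verification that the interior crossings are simple.
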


\begin{proof}
Note that $\{L_n\}$ is a family of orthogonal polynomials on $[-1,1]$. The weight function $w(\xi) = \hat\beta(\xi)^{-1}$ is positive and is a Lebesgue integrable function. Hence, the polynomial $L_n$ has $n$ simple roots in $(-1,1)$.\\

For the generalized Lobatto polynomial $\phi_{n+1}$, by its definition \eqref{eq: lobatto 1}, it is obvious that $\phi_{n+1}(-1) = 0$. The orthogonality condition \eqref{eq: general Legredre orthogonality} yields
\begin{equation*}
  \phi_{n+1}(1) = \int_{-1}^{1}w(\xi){L}_{n}(\xi)d\xi = \int_{-1}^{1}w(\xi){L}_{n}(\xi) {L}_{0}(\xi) d\xi = 0.
\end{equation*}

In the remaining of the proof, we will show that $\phi_{n+1}$ has exactly $n-1$ roots in the open interval $(-1,1)$.
By \eqref{eq: interface condition} and \eqref{eq: general orthogonal condition}, we have for $m\leq n$,
\begin{eqnarray*}
  \int_{-1}^1 \hat\beta\phi_{n+1}'(\xi) \phi_{m}'(\xi) d\xi &=&
   - \int_{-1}^1 \phi_{n+1}(\xi) (\hat\beta\phi_{m}')'(\xi) d\xi \\
 &=&   - \int_{-1}^1  \phi_{n+1}(\xi) L_{m-1}'(\xi) d\xi= 0.
\end{eqnarray*}  
 Since $L_{m-1}'\in \mathbb{P}_{m-2}(\tau)$, then 
\begin{equation}\label{eq: tmp4}
  \int_{-1}^1\phi_{n+1}(\xi) v(\xi)d\xi = 0, ~~~\forall \ v\in \mathbb{P}_{n-2}(\tau).
\end{equation}
In particular, choosing $v=1$ we have
\begin{equation*}
  \int_{-1}^1 \phi_{n+1}(\xi) d\xi = 0.
\end{equation*}
Since $\phi_{n+1}$ is continuous, and its average is zero over $(-1,1)$, therefore it must change signs at least once in $(-1,1)$. Let $\xi_1$, $\xi_2$, $\cdots$, $\xi_k$ be all points in $(-1,1)$ at which $\phi_{n+1}$ changes signs. We will show that $k=n-1$ by contradiction.\\

Suppose $k< n-1$. We choose $v(\xi) = (\xi-\xi_1)(\xi-\xi_2)\cdots(\xi-\xi_k)\in \mathbb{P}_{n-2}(\tau)$ so that  $\phi_{n+1}(\xi)v(\xi)$ does not change signs. The orthogonality \eqref{eq: tmp4} yields
\begin{equation}\label{eq: temp1}
  \int_{-1}^1 \phi_{n+1}(\xi) v(\xi) d\xi = 0.
\end{equation}
This contradicts \eqref{eq: temp1}. \\

Suppose $k> n-1$. Without loss of generality, we assume $-1<\xi_1<\cdots<\xi_k<1$ partitions $[-1,1]$ into $k+1$ subintervals, and $\hat\alpha\in(\xi_i,\xi_{i+1})$. On all $k$ noninterface subintervals, applying standard Rolle's theorem, we conclude that the derivative of $\phi_{n+1}(\xi)$ has at least one zero in each of these $k$ noninterface intervals. Hence, the weighted derivative $L_n(\xi) = \hat\beta\phi_{n+1}'(\xi)$ has at least $k$ zeros on  noninterface intervals.

On the interface subinterval $(\xi_i,\xi_{i+1})$, $\phi_{n+1}$ is not differentiable at the interior point $\hat\alpha$, then by the generalized Rolle's theorem (Lemma \ref{lemma: gen Rolle}), there exists a point $c$ such that one of $\phi_{n+1}'(c-)$ and $\phi_{n+1}'(c+)$ is non-negative, and the other is non-positive. It can be directly verified that 
\begin{equation*}
  L_n(c-) = \hat\beta(c-)\phi_{n+1}'(c-), ~~~~~
  L_n(c+) = \hat\beta(c+)\phi_{n+1}'(c+)
\end{equation*}
are also one of each, because $\hat\beta$ is strictly positive. Also, $L_n$ is a polynomial, thus continuous everywhere including at $c$.  Hence, $L_n(c) = 0$. That is, the polynomial $L_n$ has a zero in $(\xi_i,\xi_{i+1})$, which means $L_n(\xi)$ has at least $k+1 (>n)$ zeros on $(-1,1)$. This contradicts the first part of the theorem. 

In conclusion, $\phi_{n+1}$ has exactly $n-1$ roots on the open interval $(-1,1)$.
\end{proof}

Next we show the consistency of the generalized orthogonal polynomials with standard orthogonal polynomials.
\begin{lemma}
If the interface coincides with the boundary \emph{i.e.}, $\hat\alpha = \pm 1$, or if there is no jump of coefficient, \emph{i.e.}, $\beta^+ = \beta^-$, then $\{\phi_n\}$ and $\{L_n\}$ become standard Lobatto polynomial $\{\psi_n\}$ and Legendre polynomials $\{P_n\}$, respectively, up to a multiple constant.
\end{lemma}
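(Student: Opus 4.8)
The plan is to reduce both hypotheses to a single structural observation: under either assumption the weight function $w(\xi)=1/\hat\beta(\xi)$ collapses to a positive constant on $(-1,1)$, and once the weight is constant both families are forced to coincide with the standard orthogonal polynomials. So the first thing I would establish is exactly this reduction. If $\beta^+=\beta^-=:\beta$, then trivially $\hat\beta\equiv\beta$, so $w\equiv 1/\beta$ on all of $(-1,1)$. If instead $\hat\alpha=1$, the subinterval $\tau^+=(\hat\alpha,1)=(1,1)$ is empty, so $\hat\beta\equiv\beta^-$ and $w\equiv 1/\beta^-$ on the whole interval; symmetrically, $\hat\alpha=-1$ forces $\tau^-=\emptyset$ and $w\equiv 1/\beta^+$. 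In every case $w$ is a positive constant, which I denote by $w_0$.

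Next I would treat the generalized Legendre polynomials. Because $(L_n,L_m)_w=w_0\int_{-1}^1 L_n(\xi)L_m(\xi)\,d\xi$, the orthogonality relation \eqref{eq: general Legredre orthogonality} is, for a constant weight, equivalent to $\int_{-1}^1 L_n L_m\,d\xi=0$ whenever $m\ne n$. Hence the monic family $\{L_n\}$ fixed by \eqref{eq: general Legredre orthogonality} is precisely the monic family orthogonal with respect to the constant weight $1$, which by uniqueness of monic orthogonal polynomials is the monic Legendre family. Comparing leading coefficients against \eqref{eq: Legendre orthogonality}, $P_n$ has leading coefficient $(2n)!/\big(2^n(n!)^2\big)$, so $L_n=\frac{2^n(n!)^2}{(2n)!}\,P_n$, i.e. $L_n$ is $P_n$ up to a multiplicative constant.

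For the Lobatto family I would split into $n\ge 2$ and $n\in\{0,1\}$. For $n\ge 2$, definition \eqref{eq: lobatto n} gives $\phi_n(\xi)=\int_{-1}^\xi w_0\,L_{n-1}(t)\,dt = w_0\,\frac{2^{\,n-1}((n-1)!)^2}{(2n-2)!}\int_{-1}^\xi P_{n-1}(t)\,dt$, which by \eqref{eq: Labotto Poly} is exactly a constant multiple of $\psi_n$. For $n=0,1$, I would substitute the reduced weight into \eqref{eq: lobatto 0}--\eqref{eq: lobatto 1}: the common denominator $(1-\hat\alpha)\beta^-+(1+\hat\alpha)\beta^+$ becomes $2\beta$ when $\beta^+=\beta^-$ (respectively $2\beta^\mp$ in the degenerate boundary cases), and a one-line simplification collapses both branches of $\phi_0$ to $(1-\xi)/2=\psi_0$ and both branches of $\phi_1$ to $(1+\xi)/2=\psi_1$, with multiplicative constant equal to $1$.

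The hard part is not substantial here; the only point genuinely requiring care is the degenerate geometry $\hat\alpha=\pm1$. There one must observe that although $\hat\beta$ and $\phi_0,\phi_1$ are written as piecewise objects, the evaluation takes place on a single nontrivial subinterval (the other being empty), so the piecewise formulas \eqref{eq: lobatto 0}--\eqref{eq: lobatto 1} really do reduce to the single linear Lobatto functions and the weight really is globally constant. Everything else in the argument is routine bookkeeping of the multiplicative constants relating monic to normalized polynomials.
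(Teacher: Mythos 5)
Your proposal is correct and follows essentially the same route as the paper: reduce every hypothesis to the observation that $w$ is a positive constant, conclude $L_n = c_n P_n$ (the paper cites the recurrence \eqref{eq: monic generalized Legendre poly} where you invoke uniqueness of monic orthogonal polynomials, an equivalent one-liner), and then integrate via \eqref{eq: lobatto n} to get $\phi_n = \mathrm{const}\cdot\psi_n$; your explicit check of $\phi_0,\phi_1$ and of the constants is extra detail the paper omits. The only blemish is the parenthetical ``$2\beta^{\mp}$'': the denominator $(1-\hat\alpha)\beta^- + (1+\hat\alpha)\beta^+$ equals $2\beta^{\pm}$ when $\hat\alpha=\pm 1$, though this sign slip does not affect the cancellation or the conclusion.
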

\begin{proof}
Suppose $\hat\alpha = -1$. The weight function $w(\xi) = \frac{1}{\beta^+}$ becomes a constant. By the recurrence formula \eqref{eq: monic generalized Legendre poly}, it is easy to see that ${L}_n = c_{n}{P}_n$, where $c_n$ is a constant. By \eqref{eq: lobatto n} we have
\begin{equation*}
  \phi_n(\xi) = \int_{-1}^{\xi}\frac{1}{\beta^+}L_{n-1}(s)ds =  \frac{1}{\beta^+}c_{n-1}\int_{-1}^{\xi}{P}_{n-1}(s)ds = \frac{1}{\beta^+}c_{n-1}\psi_n(\xi),
\end{equation*}
for some constant $c_{n-1}$.\\

When $\hat\alpha = 1$, the argument is similar. When $\beta^+ = \beta^-$, the weight function $w(\xi) = \frac{1}{\beta^-}$ becomes a constant. The corresponding result can be obtained following a similar argument as above.
\end{proof}

We define a class of differential operators $D_x^j$ and integral operators $D_x^{-j}$, $j\geq1$:
\begin{equation}\label{eq: diff op}
(D_x^{1} v)\big|_{\tau_i} = (D_xv)\big|_{\tau_i} = v'(x), ~~  (D_x^{j}v)\big|_{\tau_i}=D_x(D_x^{j-1}v)\big|_{\tau_i}
\end{equation}
and $ D_x^{-j}: \tilde W_{\beta}^{m,q}(\Omega)\rightarrow \tilde W_{\beta}^{m,q}(\Omega), j\ge 1$  by 
\begin{equation}\label{eq: inte op}
     (D_x^{-1}v)\big|_{\tau_i}=\int_{x_{i-1}}^x v(x)dx,\ \
     (D_x^{-j}v)\big|_{\tau_i}=\int_{x_{i-1}}^xD_x^{-(j-1)}v(x)dx ,\ \ j\geq 2.
\end{equation} 
Next we prove an important inverse inequality for generalized polynomials. 
\begin{lemma}
\textbf{(Inverse Inequality)}~There exists a constant $C$, depending only on the polynomial degree $p$ such that 
\begin{equation}\label{eq: inverse}
|v|_{l,q,\tau_k}\leq C\rho h^{m-l+\frac{1}{q}-\frac{1}{r}}|v|_{m,r,\tau_k},~~~\forall v\in \tilde{\mathbb{P}}_p(\tau_k),
\end{equation}
where $1\leq q\leq \infty$, $1\leq r\leq \infty$, $0\leq m\leq l$, and  $\rho = \frac{\beta_{max}}{\beta_{min}}$.
\end{lemma}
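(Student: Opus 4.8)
The plan is to reduce the piecewise-polynomial inverse estimate on $\tau_k$ to a classical polynomial inverse estimate on the fixed reference interval $\tau=[-1,1]$, paying only one factor of $\rho$ for the coefficient jump. First I would map $\tau_k$ to $\tau$ by the affine change of variables in \eqref{eq: Lobatto interface}; writing $\hat v(\xi)=v(x)$, the chain rule gives $|v|_{s,q,\tau_k}=(2/h_k)^{s}(h_k/2)^{1/q}\,|\hat v|_{s,q,\tau}$ (with the convention $1/\infty:=0$), so dividing the $(l,q)$ relation by the $(m,r)$ relation produces exactly the factor $h_k^{\,m-l+1/q-1/r}$, which under the (quasi-uniform) mesh is controlled by a multiple of $h^{\,m-l+1/q-1/r}$. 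What then remains is the scale-free reference inequality $|\hat v|_{l,q,\tau}\le C\rho\,|\hat v|_{m,r,\tau}$ for all $\hat v\in\tilde{\mathbb P}_p(\tau)$, with $C=C(p)$ independent of $\hat\alpha$ and of $\beta^\pm$.

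The key structural observation is that such a $\hat v$ is governed by a single genuine polynomial. Every element of $\tilde{\mathbb P}_p(\tau)$ is continuous at $\hat\alpha$ and satisfies the full set of extended jump conditions \eqref{eq: extra jump condition}, so that $\beta^-(\hat v|_{\tau^-})^{(j)}(\hat\alpha)=\beta^+(\hat v|_{\tau^+})^{(j)}(\hat\alpha)$ for $j=1,\dots,p$. Taylor-expanding the two polynomial pieces about $\hat\alpha$ then collapses to the identity $\hat v|_{\tau^+}(\xi)=\hat v(\hat\alpha)+\tfrac{\beta^-}{\beta^+}\big(\hat v|_{\tau^-}(\xi)-\hat v(\hat\alpha)\big)$. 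Setting $w:=\hat v|_{\tau^-}\in\mathbb P_p(\tau)$ and $t:=\beta^-/\beta^+$, this reads $\hat v=w$ on $\tau^-$ and $\hat v=t\,w+(1-t)w(\hat\alpha)$ on $\tau^+$; conversely every $w\in\mathbb P_p(\tau)$ produces such a $\hat v$, so $w\mapsto\hat v$ is a linear bijection of $\mathbb P_p(\tau)$ onto $\tilde{\mathbb P}_p(\tau)$. The virtue of this reduction is that it never applies an inverse inequality on the sub-interval $\tau^-$ or $\tau^+$, whose length degenerates as $\hat\alpha\to\pm1$; instead all seminorms are transferred to the fixed polynomial $w$ on the whole of $[-1,1]$.

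With this in hand the case $m\ge1$ is immediate. For any order $s\ge1$ the additive constant vanishes under differentiation, so $|\hat v|_{s,q,\tau^-}=|w|_{s,q,\tau^-}$ and $|\hat v|_{s,q,\tau^+}=t\,|w|_{s,q,\tau^+}$, whence $\min(1,t)\,|w|_{s,q,[-1,1]}\le|\hat v|_{s,q,\tau}\le\max(1,t)\,|w|_{s,q,[-1,1]}$, the outer identity holding because $w$ is a single polynomial. Applying the standard polynomial inverse inequality $|w|_{l,q,[-1,1]}\le C(p)\,|w|_{m,r,[-1,1]}$ and chaining the upper bound for $|\hat v|_{l,q,\tau}$ with the lower bound for $|\hat v|_{m,r,\tau}$ yields the factor $\max(1,t)\,\max(1,1/t)$, which equals $\rho$ exactly since one of the two maxima is always $1$. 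This gives $|\hat v|_{l,q,\tau}\le C(p)\rho\,|\hat v|_{m,r,\tau}$, and rescaling proves \eqref{eq: inverse} whenever $m\ge1$.

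The main obstacle is the borderline case $m=0$, where the right-hand $L^r$-norm no longer annihilates the constant $(1-t)w(\hat\alpha)$ and the clean comparison breaks. The upper bound survives: I would estimate the derivative part of $|\hat v|_{l,q,\tau}$ by $C(p)\max(1,t)\|w\|_{0,r,[-1,1]}$ as above, and the constant part by $|1-t|\,2^{1/q}|w(\hat\alpha)|\le C(p)|1-t|\,\|w\|_{0,r,[-1,1]}$ using the finite-dimensional bound $|w(\hat\alpha)|\le C(p)\|w\|_{0,r,[-1,1]}$ valid uniformly in $\hat\alpha\in[-1,1]$. The delicate step is the reverse inequality $\|w\|_{0,r,[-1,1]}\le C(p)\rho\,\|\hat v\|_{0,r,\tau}$, since cancellation in $t\,w+(1-t)w(\hat\alpha)$ on $\tau^+$ could a priori make $\|\hat v\|_{0,r,\tau}$ small while $w$ is not. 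I would rule this out by inverting the bijection explicitly, $w=\hat v$ on $\tau^-$ and $w=t^{-1}\big(\hat v-(1-t)\hat v(\hat\alpha)\big)$ on $\tau^+$ (using $w(\hat\alpha)=\hat v(\hat\alpha)$), controlling the point value $|\hat v(\hat\alpha)|$ by $\|\hat v\|_{0,r,\tau}$ through finite-dimensional norm equivalence on $\tilde{\mathbb P}_p(\tau)$, and verifying that the resulting constant remains bounded by a multiple of $\rho$ uniformly as $\hat\alpha\to\pm1$, where $\tilde{\mathbb P}_p(\tau)$ degenerates to the ordinary polynomial space. Combining the two bounds again produces the single factor $\rho$, and rescaling completes \eqref{eq: inverse}.
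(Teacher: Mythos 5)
Your proof is correct, but it takes a genuinely different route from the paper's. The paper's argument never leaves the physical element: it observes that for $v\in\tilde{\mathbb P}_p(\tau_k)$ the flux $\beta v'$ is a single polynomial in $\mathbb P_{p-1}(\tau_k)$, writes $|v|_{l,q,\tau_k}\le \beta_{\min}^{-1}\,|\beta v'|_{l-1,q,\tau_k}$, applies the standard inverse inequality to the polynomial $\beta v'$ between orders $l-1$ and $m-1$, and returns via $|\beta v'|_{m-1,r,\tau_k}\le \beta_{\max}\,|v|_{m,r,\tau_k}$; the two coefficient bounds combine into the single factor $\rho$. Your bijection $w\mapsto\hat v$ is the integrated form of the same structural fact (indeed $\hat\beta\,\hat v'=\beta^-w'$), but you realize it through the Taylor expansion at $\hat\alpha$ and a two-sided norm comparison between $\hat v$ and $w$, at the cost of an explicit reference-element scaling step and the $\max/\min$ bookkeeping. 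What your longer route buys is the endpoint $m=0$: the paper's chain genuinely requires $m\ge 1$ (it needs the seminorm $|\beta v'|_{m-1,r}$), yet the lemma is later invoked with exactly $m=0$, $l=1$, $q=r=\infty$ in the proof of Theorem \ref{theo:11}, where $|u_h-\mathcal I_hu|_{1,\infty}\le Ch^{-1}\|u_h-\mathcal I_hu\|_{0,\infty}$ is used. Your final paragraph supplies precisely the missing argument; the one loose end there, the uniform bound $|\hat v(\hat\alpha)|\le C(p)\|\hat v\|_{0,r,\tau}$, closes cleanly by evaluating $\hat v$ on whichever of $\tau^{\pm}$ has length at least $1$, where $\hat v$ coincides (up to an affine recombination with $w$) with a single degree-$p$ polynomial on an interval of non-degenerate length. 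One small caveat shared with the paper's own proof: replacing $h_k$ by $h$ in the scaling factor when $m-l+\frac1q-\frac1r<0$ does require the quasi-uniformity you flag in passing.
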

\begin{proof} First we consider $q<\infty$, and $r<\infty$.
\begin{eqnarray}
|v|_{l,q,\tau_k}^q 
&=& \int_{\tau_k} |D_x^l v|^q dx\nonumber\\
&=& \int_{\tau_k^+} \frac{1}{(\beta^+)^q}|D_x^{l-1} (\beta^+v')|^q dx+ \int_{\tau_k^-} \frac{1}{(\beta^-)^q}|D_x^{l-1} (\beta^-v')|^q dx \nonumber\\
&\leq& \frac{1}{(\beta_{min})^q} \int_{\tau_k} |D_x^{l-1} (\beta v')|^q dx \nonumber\\
&=& \frac{1}{(\beta_{min})^q} |\beta v'|_{l-1,q,\tau_k}^q\label{eq: 1st part}
\end{eqnarray}
Note that $\beta v'$ is a polynomial for all $v\in \tilde{\mathbb{P}}_p(\tau_k)$. In fact, $\beta v' \in {\mathbb{P}}_{p-1}(\tau_k)$. Standard inverse inequality \cite{1994BrennerScott} applied to $\beta v'$ yields
\begin{equation}\label{eq: 2nd part}
|\beta v'|_{l-1,q,\tau_k} \leq Ch^{m-l+\frac{1}{q}-\frac{1}{r}}|\beta v'|_{m-1,r,\tau_k}
\end{equation}
On the other hand,
\begin{eqnarray}
|\beta v'|_{m-1,r,\tau_k}^r
&=& \int_{\tau_k} |D_x^{m-1} (\beta v')|^r dx\nonumber\\
&=& \int_{\tau_k^+}(\beta^+)^r |D_x^{m} v|^r dx+\int_{\tau_k^-} (\beta^-)^r|D_x^{m} v|^r dx\nonumber\\
&\leq& (\beta_{max})^r\int_{\tau_k}|D_x^{m} v|^r dx\nonumber\\
&=& (\beta_{max})^r|v|_{m,r,\tau_k}^r \label{eq: 3rd part}
\end{eqnarray}
Combining \eqref{eq: 1st part}, \eqref{eq: 2nd part}, and \eqref{eq: 3rd part} we have
\begin{equation}\label{eq: inverse p q less infty}
|v|_{l,q,\tau_k} \leq C\rho h^{m-l+\frac{1}{q}-\frac{1}{r}}|v|_{m,r,\tau_k}
\end{equation}

If $q=\infty$ (or $r=\infty$), we have
\begin{equation*}
|v|_{l,\infty,\tau_k} = \lim_{q\to\infty}|v|_{l,q,\tau_k} ~~~\forall  v\in \tilde{\mathbb{P}}_p(\tau_k).
\end{equation*}
Thus, the estimate \eqref{eq: inverse p q less infty} holds true.

\end{proof}

\section{Superconvergence Analysis}
In this section, we analyze the superconvergence property for the IFE method \eqref{eq: IFE method}. We first analyze the convergence estimates for interpolation. Then we discuss the superconvergence analysis for diffusion (only) interface problems \emph{i.e.}, $\gamma = c = 0$ in \eqref{eq: DE}. Finally, we consider the general elliptic interface problems, \emph{i.e.}, $\gamma \neq 0$, and $c\neq 0$.

\subsection{IFE Interpolation}
We consider the IFE interpolation using generalized Lobatto polynomials.    For any $u\in \tilde W_{\beta}^{m,q}(\Omega), m\ge 1$, we have the following Lobatto expansion of $u$ on noninterface elements $\tau_i$
\begin{equation}\label{eq: expansion on noninterface elem}
  u(x) |_{\tau_i}= \sum_{n = 0}^\infty u_{i,n}\psi_{i,n}(x),~~~i\neq k
\end{equation}
where
\begin{equation}\label{coeff:noninterface}
   u_{i,0} = u(x_{i-1}), \ \  u_{i,1} = u(x_{i}),\ \  u_{i,n}=\dfrac{\displaystyle\int_{\tau_i} u'(x)\psi_{i,n}'(x)dx}{\displaystyle\int_{\tau_i} \psi_{i,n}'(x)\psi_{i,n}'(x)dx}, \ \  n\ge 2.
  \end{equation}
On the interface element $\tau_k$,  since the flux $\beta u'$ is continuous, then it can be expanded by generalized Legendre polynomials $\{L_{k,n}\}$
\begin{equation*}\label{eq: expansion on interface elem 0}
  \beta u'(x) = \sum_{n = 0}^\infty u_{k,n}L_{k,n}(x){.}
\end{equation*}
Dividing by $\beta$ and then integrating on both sides yield the expansion for $u$
\begin{equation}\label{eq: expansion on interface elem}
  u(x)|_{\tau_k}
   =\sum_{n = 0}^\infty u_{k,n}\int_{x_{k-1}}^{x}\frac{1}{\beta(t)}L_{k,n}(t)dt =\sum_{n = 0}^\infty u_{k,n}\phi_{k,n}(x).
\end{equation}
  By the orthogonality  \eqref{eq: general orthogonal condition} and the properties of generized Lobatto polynomials in Theorem \ref{theo2:2}, we have
 \begin{equation}\label{coeff:interface}
    u_{k,0} = u(x_{k-1}), \ \ u_{k,1} = u(x_{k}),\ \ u_{k,n} = \frac{\langle u, \phi_{k,n}\rangle_{\tau_k}}{\langle \phi_{k,n},\phi_{k,n}\rangle_{\tau_k}},~~~n\geq 2,
\end{equation}
  where
 \begin{equation*}\label{eq: tmp2}
  \langle u,v\rangle_{\tau_k} =  \int_{x_{k-1}}^{x_k}\beta u'(x) v'(x)dx,~~~~\forall\  u,v\in \tilde W_{\beta}^{m,q}(\Omega).
\end{equation*}
Using the (generalized) Lobatto expansions \eqref{eq: expansion on noninterface elem} and \eqref{eq: expansion on interface elem} on noninterface and interface elements,
  we define the IFE interpolation $\mathcal{I}_h: \tilde W_{\beta}^{m,q}(\Omega) \to S_p{(\mathcal{T}_h)}$  as follows
\begin{equation}\label{eq: interpolation}
  (\mathcal{I}_hu)|_{\tau_i} =
  \left\{
    \begin{array}{ll}
      \sum\limits_{n=0}^p u_{i,n}\psi_{i,n}(x), & \text{if}~ i\neq k \vspace{1mm}\\
      \sum\limits_{n=0}^p u_{i,n}\phi_{i,n}(x), & \text{if}~ i = k.
    \end{array}
  \right.
\end{equation}
   

  \begin{lemma}
       There holds for all $j\le n-2$
 \begin{equation}\label{inter:1}
          D_x^{-j}\phi_{k,n}(x_{k-1})= D_x^{-j}\phi_{k,n}(x_{k})= 0
\end{equation}
where $D_x^{-j}$ is the integral operator defined in \eqref{eq: inte op}.
 \end{lemma}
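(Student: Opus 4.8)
The plan is to turn each boundary value $(D_x^{-j}\phi_{k,n})(x_k)$ into a single weighted moment of $\phi_{k,n}$ against a polynomial of controlled degree, and then kill that moment with the orthogonality \eqref{eq: tmp4} established in the proof of Theorem \ref{theo2:2}. The endpoint $x_{k-1}$ is immediate and carries no constraint on $n$: since every $D_x^{-j}$ integrates from $x_{k-1}$, definition \eqref{eq: inte op} gives $(D_x^{-1}\phi_{k,n})(x_{k-1}) = \int_{x_{k-1}}^{x_{k-1}}\phi_{k,n}(t)\,dt = 0$, and a one-line induction on $j$ propagates this to $(D_x^{-j}\phi_{k,n})(x_{k-1}) = 0$ for all $j \ge 1$.

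For the right endpoint I would first record Cauchy's formula for repeated integration,
\[
 (D_x^{-j}\phi_{k,n})(x) = \frac{1}{(j-1)!}\int_{x_{k-1}}^{x}(x-t)^{j-1}\phi_{k,n}(t)\,dt,
\]
which follows from \eqref{eq: inte op} by induction on $j$ (the inductive step is one application of Fubini's theorem, the lower limit staying fixed at $x_{k-1}$). Evaluating at $x = x_k$ and passing to the reference element through the affine map \eqref{eq: Lobatto interface}, under which $t = \tfrac{h_k}{2}\xi + \tfrac{x_{k-1}+x_k}{2}$ and hence $x_k - t = \tfrac{h_k}{2}(1-\xi)$, turns the boundary value into
\[
 (D_x^{-j}\phi_{k,n})(x_k) = \frac{1}{(j-1)!}\Big(\frac{h_k}{2}\Big)^{j}\int_{-1}^{1}(1-\xi)^{j-1}\phi_{n}(\xi)\,d\xi .
\]
The only feature that matters here is that the polynomial factor $(1-\xi)^{j-1}$ has degree exactly $j-1$.

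It remains to apply orthogonality. Relabeling $n+1 \mapsto n$ in \eqref{eq: tmp4} yields $\int_{-1}^{1}\phi_n(\xi)v(\xi)\,d\xi = 0$ for every $v \in \mathbb{P}_{n-3}(\tau)$. Since the hypothesis $j \le n-2$ is equivalent to $j-1 \le n-3$, the factor $(1-\xi)^{j-1}$ lies in $\mathbb{P}_{n-3}(\tau)$, so the displayed moment vanishes and $(D_x^{-j}\phi_{k,n})(x_k) = 0$. I expect the only delicate point to be the index bookkeeping: one must verify that the degree $j-1$ of the moment polynomial lands inside the orthogonality class $\mathbb{P}_{n-3}$ precisely on the stated range $j \le n-2$, which is what makes the bound sharp. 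Everything else---the continuity of the piecewise-polynomial $\phi_{k,n}$, the affine change of variables, and the vanishing boundary terms in the repeated integration---is routine.
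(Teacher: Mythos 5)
Your proof is correct, and it reaches the conclusion by a different mechanism than the paper. Both arguments ultimately rest on the same fact, namely that $\phi_{k,n}$ vanishes at the endpoints of $\tau_k$ and satisfies $\int_{\tau_k}\phi_{k,n}v\,dx=0$ for all $v\in\mathbb{P}_{n-3}(\tau_k)$ (the paper's equation \eqref{iin}, which is your relabeled \eqref{eq: tmp4}). The paper exploits this recursively: one integration by parts shows that $D_x^{-1}\phi_{k,n}$ inherits the same two properties with the orthogonality class degraded to $\mathbb{P}_{n-4}$, and iterating, $D_x^{-j}\phi_{k,n}$ vanishes at the endpoints and is orthogonal to $\mathbb{P}_{n-3-j}$ for $j\le n-3$, the endpoint value at each stage being the case $v=1$. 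You instead collapse the $j$-fold integration into Cauchy's single-kernel formula $\frac{1}{(j-1)!}\int_{x_{k-1}}^{x}(x-t)^{j-1}\phi_{k,n}(t)\,dt$ and test orthogonality once against $(1-\xi)^{j-1}\in\mathbb{P}_{j-1}\subseteq\mathbb{P}_{n-3}$; your index check $j\le n-2\iff j-1\le n-3$ is exactly right, and the treatment of the left endpoint is correctly identified as trivial for all $j\ge1$. The trade-off is minor: your version is shorter and makes the sharpness of the range $j\le n-2$ visible in one line, while the paper's recursion also records the intermediate orthogonality of $D_x^{-j}\phi_{k,n}$ to $\mathbb{P}_{n-3-j}$ as a structural byproduct (in effect, your Cauchy kernel is the closed form of the paper's repeated integrations by parts). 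Either argument is acceptable.
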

 \begin{proof}
      Note from \eqref{eq: tmp4} and Theorem \ref{theo2:2} that
 \begin{equation}\label{iin}
 \phi_{k,n}(x_{k-1})=\phi_{k,n}(x_k)=0,\ \ \  \int_{x_{k-1}}^{x_k}\phi_{k,n} (x)v(x)dx=0,\ \ \forall\  v\in \mathbb P_{n-3}(\tau_k).
  \end{equation}
     Choosing $v=1$ in the above equation, we  immediately  obtain
 \[
   D_x^{-1}\phi_{k,n}(x_{k})=\int_{x_{k-1}}^{x_k}\phi_{k,n}(x)dx=0=
   D_x^{-1}\phi_{k,n}(x_{k-1}),\ \ \forall \ n\ge 3.
\]
  Moveover, noticing that $D_x^{-1}v\in \mathbb P_{n-3}(\tau_k)$ for all $v\in\mathbb P_{n-4}(\tau_k)$,   we have,  from \eqref{iin} and the integration by parts,
 \[
    \int_{\tau_k}D_x^{-1}\phi_{k,n}(x)v(x)dx=-\int_{\tau_k}\phi_{k,n}(x)D_x^{-1}v(x)dx=0,\ \ \forall \ v\in\mathbb P_{n-4}(\tau_k).
\]
   In other words,  $D_x^{-1}\phi_{k,n}$ shares the same properties  of $\phi_{k,n}$,  \emph{i.e.},
 \[
    D_x^{-1}\phi_{k,n}(x_{k})=
        D_x^{-1}\phi_{k,n}(x_{k-1})= 0,\ \ \  \int_{x_{k-1}}^{x_k}D_x^{-1}\phi_{k,n}(x)v(x)dx=0,\ \ v\in \mathbb P_{n-4}(\tau_k).
 \]
 By recursion,  there holds for all $j\le n-3$
\[
     D_x^{-j}\phi_{k,n}(x_{k})=
        D_x^{-j}\phi_{k,n}(x_{k-1})= 0,\ \ \  \int_{x_{k-1}}^{x_k}D_x^{-j}\phi_{k,n}(x)v(x)dx=0,\ \ v\in \mathbb P_{n-3-j}(\tau_k),
\]
 %
    which yields
   \[
        D_x^{-(j+1)}\phi_{k,n}(x_k)=\int_{x_{k-1}}^{x_k}D_x^{-j}\phi_{k,n}(x)dx=0=D_x^{-(j+1)}\phi_{k,n}(x_{k-1}),\ \  j\le n-3.
   \]
     This finishes our proof.
  \end{proof}

   Now we are ready to show the approximation properties of the IFE interpolation  $\mathcal{I}_hu$.

\begin{lemma}
Assume that $u\in \tilde W_{\beta}^{p+2,\infty}(\Omega)$, and $\mathcal I_hu$ is the IFE interpolation of $u$ defined by \eqref{eq: interpolation}. The following orthogonality and approximation properties hold true. 
  \begin{enumerate}
  \item Orthogonality:
\begin{equation}\label{ortho:1}
      \int_{\tau_i}\beta(u-\mathcal I_hu)'v' dx=0,\ \ \forall \ v\in S_p(\mathcal{T}_h),\ i=1,\ldots,N.
 \end{equation}
  \item Superconvergence on noninterface elements $\tau_i$, $i\ne k$: There exists a constant $C$ depending only on the polynomial degree $p$ such that 
\begin{equation}\label{approx:super1}
  |(u-{\cal I}_hu) (l_{im}) | \le C h^{p+2}|u|_{p+2,\infty,\tau_i},~~~i\ne k,
\end{equation}
\begin{equation}\label{approx:super2}
  |(u'-({\cal I}_hu)') (g_{in})| \leq Ch^{p+1}|u|_{p+2,\infty,\tau_i},~~~~i\neq k,
\end{equation}
where $l_{im}$, $m=1,\cdots, p-1$ are interior roots of $\psi_{i,p+1}$ on $\tau_i$, and $g_{in}$, $n=1,\cdots, p$ are roots of $P_{i,p}$ on $\tau_i$.
 \item Superconvergence on interface element $\tau_k$: There exists a constant $C$ depending only on the polynomial degree $p$ and the ratio of coefficient $\rho$ such that 
\begin{equation}\label{approx:super4}
  |(u-{\cal I}_hu) (l_{km}) | \le C h^{p+2}|u|_{p+2,\infty,\tau_k},
\end{equation}
\begin{equation}\label{approx:super3}
  |(\beta u'-(\beta{\cal I}_hu)') (g_{kn})| \leq Ch^{p+1}|u|_{p+2,\infty,\tau_k},
\end{equation}
where $l_{km}$, $m=1,\cdots, p-1$ are interior roots of $\phi_{k,p+1}$ on $\tau_k$, and $g_{kn}$, $n=1,\cdots, p$ are roots of $L_{k,p}$ on $\tau_k$.
\end{enumerate}
 \end{lemma}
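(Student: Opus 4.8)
The plan is to deduce everything from one structural fact: the interpolant is a projection interpolant in disguise. On a noninterface element $\tau_i$ the derivatives $\psi_{i,n}'$ are scaled Legendre polynomials ($\psi_{i,n}'\propto P_{i,n-1}$), and on the interface element $\tau_k$ the weighted derivatives satisfy $\hat\beta\phi_{k,n}'=L_{k,n-1}$; moreover the coefficients in \eqref{coeff:noninterface} and \eqref{coeff:interface} are precisely the Legendre, respectively generalized Legendre, coefficients of $u'$ and of the flux $\beta u'$. Hence $(u-\mathcal I_h u)'|_{\tau_i}$ is $L^2$-orthogonal to $\mathbb P_{p-1}(\tau_i)$ by \eqref{eq: Legendre orthogonality}, while $\beta(u-\mathcal I_h u)'|_{\tau_k}$ is $L^2_w$-orthogonal ($w=1/\hat\beta$) to $\mathbb P_{p-1}(\tau_k)$ by \eqref{eq: general orthogonal condition}. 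Since $\beta$ is constant on each $\tau_i$ and $v'|_{\tau_i}\in\mathbb P_{p-1}$ for $v\in S_p(\mathcal T_h)$, the orthogonality \eqref{ortho:1} follows on every element. Equivalently, writing $\Pi_{p-1}$ for the $L^2$ projection onto $\mathbb P_{p-1}(\tau_i)$ and $\Pi_{p-1}^w$ for the weighted projection onto $\mathbb P_{p-1}(\tau_k)$, I record the characterization $(\mathcal I_h u)'=\Pi_{p-1}u'$ on $\tau_i$ and $\beta(\mathcal I_h u)'=\Pi_{p-1}^w(\beta u')$ on $\tau_k$, which drives the rest.

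For the function-value estimates \eqref{approx:super1} and \eqref{approx:super4} I would insert the intermediate degree-$(p+1)$ interpolant $\mathcal I_h^{p+1}u$, obtained by retaining the extra term $n=p+1$ in \eqref{eq: interpolation}. Two facts then suffice. First, $\mathcal I_h^{p+1}u-\mathcal I_h u$ equals $u_{i,p+1}\psi_{i,p+1}$ on $\tau_i$ (resp. $u_{k,p+1}\phi_{k,p+1}$ on $\tau_k$), which vanishes exactly at the interior roots $l_{im}$ of $\psi_{i,p+1}$ (resp. $l_{km}$ of $\phi_{k,p+1}$); these $p-1$ roots exist by Theorem \ref{theo2:2}. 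Second, $u-\mathcal I_h^{p+1}u$ vanishes at the element endpoints, since the nodal coefficients $u_{\cdot,0},u_{\cdot,1}$ reproduce endpoint values (the endpoint-vanishing structure exploited in \eqref{inter:1}), and its derivative equals $u'-\Pi_p u'$ on $\tau_i$ and $\beta^{-1}\big(\beta u'-\Pi_p^w(\beta u')\big)$ on $\tau_k$. Bounding that derivative in $L^\infty$ by $O(h^{p+1})$ and integrating from the nearest endpoint gives $|u-\mathcal I_h^{p+1}u|\le Ch\cdot O(h^{p+1})=O(h^{p+2})$ at the chosen points, and adding the two contributions yields \eqref{approx:super1} and \eqref{approx:super4}.

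The derivative estimates \eqref{approx:super2} and \eqref{approx:super3} I would obtain by isolating the top modal coefficient. On $\tau_i$ write $u'-\Pi_{p-1}u'=(u'-\Pi_p u')+\hat u_p P_{i,p}$, and on $\tau_k$ write $\beta u'-\Pi_{p-1}^w(\beta u')=\big(\beta u'-\Pi_p^w(\beta u')\big)+\hat g_p L_{k,p}$. The top modes $P_{i,p}$ and $L_{k,p}$ vanish at their $p$ simple Gauss points $g_{in}$ and $g_{kn}$ (again Theorem \ref{theo2:2}), so at those points the derivative error collapses to the degree-$p$ projection remainders $u'-\Pi_p u'$ and $\beta u'-\Pi_p^w(\beta u')$, each of which is $O(h^{p+1})$ in $L^\infty$.

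The main obstacle is the interface element, and the key idea that unlocks it is that, although $u$ is only piecewise smooth, the flux $g:=\beta u'$ is \emph{globally} smooth on $\tau_k$. Indeed, membership $u\in\tilde W_\beta^{p+2,\infty}(\Omega)$ encodes $\bigjump{\beta u^{(j)}(\alpha)}=0$ for $j=1,\dots,p+2$, and since $\beta$ is piecewise constant these say exactly that $g^{(j-1)}=\beta u^{(j)}$ is continuous across $\alpha$ for all $j\le p+2$; hence $g\in W^{p+1,\infty}(\tau_k)$ as an ordinary Sobolev function with $|g|_{p+1,\infty,\tau_k}\le\beta_{\max}|u|_{p+2,\infty,\tau_k}$, so classical polynomial-approximation theory applies to $g$. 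The remaining delicacy is that $\Pi_p^w$ carries the discontinuous weight $w=1/\hat\beta$; to bound $\|g-\Pi_p^w g\|_{\infty,\tau_k}$ I would compare the weighted projection error with the unweighted best approximation (the weight lies between $\beta_{\max}^{-1}$ and $\beta_{\min}^{-1}$, so the two $L^2$ errors differ by a factor controlled by $\rho$), apply Bramble--Hilbert to the best polynomial error, and finally pass from $L^2$ to $L^\infty$ on $\tau_k$ via the inverse inequality \eqref{eq: inverse}. This is precisely where the factor $\rho$ in the interface constants arises and is the technically hardest step; once it is in place, every interface estimate mirrors the noninterface (classical FEM) argument, and all four bounds follow.
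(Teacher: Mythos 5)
Your proof is correct, but it follows a genuinely different route from the paper's. The paper works entirely with the modal expansions \eqref{eq: expansion on noninterface elem} and \eqref{eq: expansion on interface elem}: it proves the coefficient decay $|u_{i,n}|\le C_n h^n|u|_{n,\infty,\tau_i}$ by repeated integration by parts against the Rodrigues representation of $P_{n-1}$ on noninterface elements, and $|u_{k,n}|\le C_{n,\rho}h^n|u|_{n,\infty,\tau_k}$ on the interface element via the antiderivative operators $D_x^{-j}$ and the endpoint-vanishing property \eqref{inter:1}; the error is then the tail $\sum_{n\ge p+1}$ of the expansion, whose leading mode $\psi_{i,p+1}$ (resp.\ $\phi_{k,p+1}$, $P_{i,p}$, $L_{k,p}$) vanishes at the special points, leaving an $O(h^{p+2})$ (resp.\ $O(h^{p+1})$) remainder. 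You instead exploit the projection characterization $(\mathcal I_hu)'=\Pi_{p-1}u'$ and $\beta(\mathcal I_hu)'=\Pi_{p-1}^w(\beta u')$, insert the degree-$(p+1)$ interpolant, and convert everything into standard $L^\infty$ estimates for (weighted) $L^2$ projections plus integration from an endpoint. Both arguments hinge on the same two structural facts (the special points are roots of the first discarded mode, and the function-level error gains a factor of $h$ by vanishing at the endpoints), but they package them differently. The paper's coefficient-decay route exposes the explicit leading term $u_{i,p+2}\psi_{i,p+2}$ of the error, which is useful for finer asymptotics; your route avoids summing the infinite tail (where the paper's term-by-term bound $|u_{i,n}|\lesssim h^n$ for $n>p+2$ strictly needs more regularity than $\tilde W_\beta^{p+2,\infty}$, so a uniform $h^{p+2}$ bound on all higher coefficients is really what is required there), and it isolates the one genuinely new difficulty on the interface element — the $L^\infty$ bound for the $1/\hat\beta$-weighted projection of the globally smooth flux $\beta u'$ — resolving it by norm equivalence of the weighted and unweighted $L^2$ norms (costing a factor of $\rho$) together with the inverse inequality \eqref{eq: inverse}. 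This is a clean and arguably more robust alternative; just make sure, when writing it up, to state explicitly that the $L^\infty$ operator norm of $\Pi_p$ on the reference element is a constant depending only on $p$ (scale invariance of the $L^2$ projection), and that on $\tau_k$ the constant is taken uniform in the interface location $\hat\alpha$ through the $\rho$-dependence, exactly as you indicate.
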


\begin{proof}
   By \eqref{eq: expansion on noninterface elem}, \eqref{eq: expansion on interface elem} and \eqref{eq: interpolation}, we have
 \begin{equation}\label{eq: interpolation error}
  (u-\mathcal{I}_hu)|_{\tau_i} =
  \left\{
    \begin{array}{ll}
      \sum\limits_{n=p+1}^\infty u_{i,n}\psi_{i,n}(x), & \text{if}~ i\neq k, \vspace{1mm}\\
      \sum\limits_{n=p+1}^\infty u_{i,n}\phi_{i,n}(x), & \text{if}~ i= k.
    \end{array}
  \right.
\end{equation}
   Then  \eqref{ortho:1} follows from the orthogonal properties of  (generalized) Lobatto polynomials.

On each noninterface element $\tau_i, i\neq k$, we have from \eqref{coeff:noninterface}
  \begin{eqnarray}
  u_{i,n}
   &=& \frac{h_i}{2n-1}\int_{\tau_i} u'(x)\psi_{i,n}'(x)dx
   = \frac{2}{2n-1}\int_{-1}^1 \frac{d u(\xi)}{d\xi}P_{n-1}(\xi)d\xi \nonumber\\
   &=& \frac{2}{2n-1}\frac{1}{{(n-1)!}2^{n-1}}\int_{-1}^1 \frac{d u(\xi)}{d\xi} \frac{d^{n-1}}{d\xi^{n-1}}(\xi^2-1)^{n-1}d\xi \nonumber\\
   &=& \frac{2}{2n-1}\frac{(-1)^{j-1}}{{(n-1)!}2^{n-1}}\int_{-1}^1 \frac{d^{j} u(\xi)}{d\xi^j}\frac{d^{n-j}}{d\xi^{n-j}}(\xi^2-1)^{n-1}d\xi,\ \ j\le n\nonumber.
\end{eqnarray}
Since
\[
   \frac{d^{j} u(\xi)}{d\xi^j}=\left(\frac{h_i}{2}\right)^j\frac{d^{j} u(x)}{dx^j},
  \]
  then let $j=n$, we have 
\begin{equation}\label{eq: bound for uim}
  |u_{i,n}|\leq C_{n}h^n|u|_{n,\infty,\tau_i},
\end{equation}
where $C_{n}$ is a positive constant depending only on $n$. By \eqref{eq: interpolation error} and \eqref{eq: bound for uim} we can show \eqref{approx:super1} as follows
\begin{eqnarray*}
 (u-\mathcal{I}_hu)(l_{im}) &=&  \sum\limits_{n=p+1}^\infty u_{i,n}\psi_{i,n}(l_{im}) \leq |u_{i,p+2}||\psi_{i,p+2}(l_{im})| + O(h^{p+3}) \\
 &\leq& C_p h^{p+2}|u|_{p+2,\infty,\tau_i},
\end{eqnarray*}
where $C_p$ depends only on the polynomial degree $p$.

On the interface element $\tau_k$, by \eqref{coeff:interface},
 \begin{eqnarray*}
       u_{k,n}&=&\frac{1}{\langle \phi_n,\phi_n\rangle_{\tau}}\frac{h_k}{2}\int_{\tau_k}(\beta u')(x)\phi'_{k,n}(x)dx\\
              &=&  \frac{(-1)^{j-1}}{\langle \phi_n,\phi_n\rangle_{\tau}}\frac{h_k}{2}
 \int_{x_{k-1}}^{x_{k}}(\beta u')^{(j+1)}(x)D_x^{-j}\phi_{k,n}(x)dx,\ \ j\le n-2.
   \end{eqnarray*}
      Here in the last step, we have used the integration by parts and \eqref{inter:1}. We let $j=n-2$, and use the estimate $ \|D_x^{-1}v\|_{0,\infty}\le h\|v\|_{0,\infty}$ to obtain
 \begin{equation}\label{eq: bound for uim 2}
  |u_{k,n}|\leq  C_{n}h^{n}|\beta u'|_{n-1,\infty,\tau_k}\leq C_{n,\rho}h^{n}|u|_{n,\infty,\tau_k},\end{equation}
where $C_{n,\rho}$ depends on $n$ and the coefficient ratio $\rho$.
   Then \eqref{approx:super4} follow from \eqref{eq: interpolation error} and \eqref{eq: bound for uim 2}
 \begin{eqnarray*}
 (u-\mathcal{I}_hu)(l_{km}) &=&  \sum\limits_{n=p+1}^\infty u_{k,n}\phi_{i,n}(l_{km}) \leq |u_{k,p+2}||\phi_{k,p+2}(l_{km})| + O(h^{p+3}) \\
 &\leq& C_{p,\rho} h^{p+2}|u|_{p+2,\infty,\tau_k},
\end{eqnarray*} 
 where $C_{p,\rho}$ depends only on the polynomial degree $p$ and coefficient ratio $\rho$. \\
 
For derivatives, we note that
  \begin{eqnarray*}
  (u'-(\mathcal{I}_hu)')|_{\tau_i} &=& \frac{2}{h_i}\sum\limits_{n=p}^\infty u_{i,n}P_{i,n}(x),~~~ \text{if}~ i\neq k,\label{eq: interpolation error derivative}\\
  (\beta u'-(\beta \mathcal{I}_hu)')|_{\tau_k} &=&  \frac{2}{h_k}  \sum\limits_{n=p}^\infty u_{k,n}L_{k,n}(x).\label{eq: interpolation error flux}
\end{eqnarray*}
Then \eqref{approx:super2} and \eqref{approx:super3} follow from \eqref{eq: bound for uim}-\eqref{eq: bound for uim 2}.  The proof is complete.
\end{proof}

\subsection{Superconvergence for diffusion interface problems}

We first consider the diffusion interface problem, \emph{i.e.}, $\gamma = c = 0$ in \eqref{eq: DE}. Assume that $u_h\in S_p(\mathcal{T}_h)$ is the IFE solution of
\begin{equation}\label{eq: IFE method Diffusion}
  a(u_h,v_h) := (\beta u_h',v_h') = (f,v_h),~~~\forall v_h\in S_p(\mathcal{T}_h).
\end{equation}
By the Poincar\'{e} inequality, and the orthogonality \eqref{ortho:1}, we have
\begin{eqnarray*}
  \|\mathcal{I}_hu-u_h\|_{1}^{2}\leq  C a(\mathcal{I}_hu-u_h,\mathcal{I}_hu-u_h)\leq C a(\mathcal{I}_hu-u,\mathcal{I}_hu-u_h) = 0.
\end{eqnarray*}
Hence, $u_h = \mathcal{I}_hu$. That means $u_h$ inherits all superconvergent properties \eqref{approx:super1} - \eqref{approx:super3} of ${\cal I}_hu$.
We summarize these results in the following theorem.
\begin{theorem}
Let $\mathcal{T}_h = \{\tau_i\}_{i=1}^N$ be a mesh of $\Omega$ such that the interface $\alpha\in \tau_k$.  Let $u_h\in S_p(\mathcal{T}_h)$ be the IFE solution of \eqref{eq: IFE method Diffusion} where $p\ge 2$, and $u\in \tilde W_\beta^{p+2,\infty}(\Omega)$ be the exact solution of \eqref{eq: DE} - \eqref{eq: jump condition}. Then we have the following results.
\begin{itemize}
  \item $u_h$ is exact at the mesh points, \emph{i.e.},
\begin{equation}\label{eq: exact on nodes}
  (u - u_h) (x_i) = 0,~~~~\forall~ i = 0,1,\cdots, N.
\end{equation}
  \item On every noninterface element $\tau_i$, $i\ne k$, $u_h$ is superconvergent at roots of Lobatto polynomial $\psi_{i,p+1}$, and the derivative $u_h'$ is superconvergent at roots of Legendre polynomial $P_{i,p}$. That is, there exists a constant $C$ depending only on polynomial degree $p$ such that 
\begin{equation}\label{eq: supercvg Lobatto Gauss}
  (u - u_h) (l_{im}) = Ch^{p+2}|u|_{p+2,\infty},~~
  (u' - u_h') (g_{in}) = Ch^{p+1}|u|_{p+2,\infty}.
\end{equation}
  \item On the interface element $\tau_k$, $u_h$ is superconvergent at roots of generalized Lobatto polynomial $\phi_{k,p+1}$, and the flux $\beta u_h'$ is superconvergent at roots of generalized Legendre polynomial $L_{k,p}$. That is, there exists a constant $C$ depending only on polynomial degree $p$ and the ratio of coefficient jump $\rho$ such that
\begin{equation}\label{eq: supercvg Lobatto Gauss inter}
  (u - u_h) (l_{km}) = Ch^{p+2}|u|_{p+2,\infty},~~
  (\beta u' - \beta u_h') (g_{kn}) = Ch^{p+1}|u|_{p+2,\infty}.
\end{equation}
\end{itemize}
\end{theorem}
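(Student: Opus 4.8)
The plan is to collapse the entire statement onto the interpolation estimates already established, via the Galerkin--interpolation coincidence $u_h = \mathcal{I}_h u$. First I would record that when $\gamma = c = 0$ the bilinear form $a(u_h,v_h) = (\beta u_h',v_h')$ is symmetric and, by the Poincar\'e inequality, coercive on $S_p(\mathcal{T}_h)\subset H_0^1(\Omega)$. The IFE solution satisfies the Galerkin orthogonality $a(u-u_h,v_h)=0$ for all $v_h\in S_p(\mathcal{T}_h)$, while the interpolant satisfies the elementwise orthogonality \eqref{ortho:1}, which summed over all elements gives $a(u-\mathcal{I}_h u,v_h)=0$ for the same test functions. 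Subtracting the two identities and testing with $v_h = \mathcal{I}_h u - u_h$ yields $a(\mathcal{I}_h u - u_h,\mathcal{I}_h u - u_h)=0$, and coercivity then forces $\mathcal{I}_h u = u_h$. This single identity is the heart of the theorem: once it holds, every assertion about $u_h$ becomes an assertion about the interpolant, whose pointwise behaviour is completely described by the preceding lemma.

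Granting the coincidence, I would next prove the nodal exactness \eqref{eq: exact on nodes}. By construction the interpolation coefficients satisfy $u_{i,0}=u(x_{i-1})$ and $u_{i,1}=u(x_i)$ on every element (see \eqref{coeff:noninterface} and \eqref{coeff:interface}). The lowest two (generalized) Lobatto basis functions are exactly the Lagrange nodal shape functions, namely $\psi_{i,0},\psi_{i,1}$ on noninterface elements and $\phi_{k,0},\phi_{k,1}$ on the interface element, each equal to one at a single endpoint and zero at the other, whereas every higher basis function vanishes at both endpoints: $\psi_{i,n}(x_{i-1})=\psi_{i,n}(x_i)=0$ for $n\ge 2$ from \eqref{eq: Labotto Poly} (using $\int_{-1}^1 P_{n-1}=0$), and $\phi_{k,n}(x_{k-1})=\phi_{k,n}(x_k)=0$ for $n\ge 2$ from Theorem \ref{theo2:2}. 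Hence $(\mathcal{I}_h u)(x_i)=u(x_i)$ at every mesh point, and since $u_h=\mathcal{I}_h u$ this gives $(u-u_h)(x_i)=0$.

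Finally, the superconvergence estimates \eqref{eq: supercvg Lobatto Gauss} and \eqref{eq: supercvg Lobatto Gauss inter} follow by substituting $u_h=\mathcal{I}_h u$ into the interpolation bounds of the previous lemma. On a noninterface element this transfers \eqref{approx:super1} (the $O(h^{p+2})$ bound at the interior roots $l_{im}$ of $\psi_{i,p+1}$) and \eqref{approx:super2} (the $O(h^{p+1})$ derivative bound at the roots $g_{in}$ of $P_{i,p}$); on the interface element it transfers \eqref{approx:super4} and \eqref{approx:super3} verbatim, giving the stated bounds at the roots of $\phi_{k,p+1}$ and $L_{k,p}$. No new estimate is required.

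The main obstacle is therefore not in this theorem but in the two ingredients it consumes: the interpolation lemma and, above all, the coincidence $u_h=\mathcal{I}_h u$. That coincidence rests entirely on the interpolation orthogonality \eqref{ortho:1}, which is available precisely because the generalized Lobatto basis was engineered to be orthogonal in the energy inner product $(\beta u',v')$. I would stress that symmetry and coercivity of $a(\cdot,\cdot)$ enter in an essential way here: for the general elliptic problem the convection and reaction terms destroy the identity $u_h=\mathcal{I}_h u$, so recovering superconvergence in that case will demand a separate supercloseness or duality argument rather than the clean collapse exploited above.
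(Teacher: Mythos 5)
Your proposal is correct and follows essentially the same route as the paper: establish $u_h=\mathcal{I}_h u$ by combining Galerkin orthogonality with the interpolation orthogonality \eqref{ortho:1} and coercivity of $(\beta\, \cdot',\cdot')$, then read off nodal exactness and the superconvergence bounds from the interpolation lemma. The paper states this more tersely (it does not spell out the nodal-exactness step, which is immediate from $u_{i,0}=u(x_{i-1})$, $u_{i,1}=u(x_i)$ and the vanishing of the higher basis functions at the endpoints), but the argument is identical.
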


\subsection{Superconvergence for general elliptic interface problems}

We consider the general second-order elliptic interface problem. As the standard finite element approximation, we cannot expect $u_h$ is exact at the mesh points. However, we may establish similar superconvergence results as the counterpart finite element methods by using  the superconvergence analysis tool. To this end, we will need to construct a special function  $\omega$.  Define 
\[
  \tilde S_p(\mathcal{T}_h) := \{v\in H^1(\Omega): v|_{\tau_i} \in \mathbb{P}_p(\tau_i), ~i\ne k, ~\text{and}~ v|_{\tau_k} \in \tilde{\mathbb{P}}_p(\tau_k), v(a)=0\}.
\]
  Let $\omega\in\tilde S_p(\mathcal{T}_h)$ be a  function satisfying 
 \begin{equation}\label{corr:w}
   (\beta\omega',v')=(\gamma (u-{\cal I}_hu),v'),\ \ \forall v\in \tilde S_p({\cal T}_h).  
\end{equation}
   Apparently, the Lax-Milgram theory assures the existence and uniqueness of the solution $\omega$. Moreover, we have the following estimate for $\omega$.

   \begin{lemma}  Let $u\in \tilde W^{p+1,\infty}_{\beta}(\Omega)$ and  $\omega\in  \tilde S_p(\mathcal{T}_h)$ be the special function defined by \eqref{corr:w}.  Then for all $p\ge 2$, 
 \begin{equation}\label{esti:w}
    \|\omega\|_{0,\infty}\le C h^{p+2}\|u\|_{p+1,\infty}, 
\end{equation}
 where $C$ is a positive constant  depending only on the coefficients $\beta$ and $\gamma$. 
 \end{lemma}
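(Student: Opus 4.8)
The plan is to reduce the bound on the discrete function $\omega$ to a pointwise bound on an explicit continuous function, and then to control the IFE interpolant of that function. Write $\eta := u-\mathcal{I}_hu$ for the interpolation error and introduce
\begin{equation*}
  w(x) := \gamma\int_a^x \frac{\eta(t)}{\beta(t)}\,dt .
\end{equation*}
Since the test functions in $\tilde S_p(\mathcal{T}_h)$ are only required to vanish at $a$, their derivatives span every admissible direction, so the right-hand side of \eqref{corr:w} is exactly $(\beta w',v')$, where $\beta w'=\gamma\eta$ is continuous and hence $w$ is an admissible (flux-continuous) function. Consequently \eqref{corr:w} reads $(\beta(\omega-w)',v')=0$ for all $v\in\tilde S_p(\mathcal{T}_h)$, i.e. $\omega$ is the energy projection of $w$ onto $\tilde S_p(\mathcal{T}_h)$. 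The element-wise orthogonality \eqref{ortho:1} (which is local and therefore holds for test functions in $\tilde S_p$ as well) shows that $\mathcal I_hw$ satisfies the very same identity, and since $(\beta\,\cdot',\cdot')$ is coercive on $\tilde S_p(\mathcal{T}_h)$ by the Poincar\'e inequality, uniqueness yields $\omega=\mathcal I_hw$, exactly as in the diffusion case $u_h=\mathcal I_hu$.

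Next I would prove $\|w\|_{0,\infty}\le Ch^{p+2}\|u\|_{p+1,\infty}$. The crucial ingredient is that $\eta$ has vanishing moments against $\mathbb P_{p-2}$ on each element: combining \eqref{eq: tmp4} (and its noninterface analogue for the Lobatto tail) with $p\ge2$ gives $\int_{\tau_i}\eta\,q\,dx=0$ for all $q\in\mathbb P_{p-2}(\tau_i)$, in particular $\int_{\tau_i}\eta\,dx=0$. To estimate $w(x)$ I split $\int_a^x\eta/\beta$ into full elements plus one partial element. On each full noninterface element $\beta$ is constant, so $\int_{\tau_i}\eta/\beta=\beta_i^{-1}\int_{\tau_i}\eta=0$, and these terms do not accumulate over the $O(h^{-1})$ elements. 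The interface element contributes $\int_{\tau_k}\eta/\beta$ only once, and since $\eta|_{\tau_k}=\sum_{n\ge p+1}u_{k,n}\phi_{k,n}$ with $|u_{k,n}|\le C_{n,\rho}h^{n}|u|_{n,\infty}$ by \eqref{eq: bound for uim 2}, this single term is $O(h^{p+2})$. The remaining partial integral is $O(h^{p+2})$ as well: on a noninterface element the mean-free antiderivative $D_x^{-1}(\eta/\beta)$ vanishes at both endpoints, while on the interface element one uses the crude bound $\le h\|\eta/\beta\|_{0,\infty}$. Summing the three contributions gives $\|w\|_{0,\infty}\le Ch^{p+2}\|u\|_{p+1,\infty}$, and in particular $|w(x_i)|\le Ch^{p+2}\|u\|_{p+1,\infty}$ at every node.

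Finally I would bound $\|\omega\|_{0,\infty}=\|\mathcal I_hw\|_{0,\infty}$ through the expansion coefficients of $\mathcal I_hw$, element by element. The nodal coefficients are the nodal values $w(x_{i-1}),w(x_i)$, already shown to be $O(h^{p+2})$. For the higher coefficients $n\ge2$, formulas \eqref{coeff:noninterface} and \eqref{coeff:interface} express $w_{i,n}$ as a ratio whose numerator is $\int_{\tau_i}\beta w'\,\psi_{i,n}'\,dx$ on noninterface elements (and the analogous expression with $\phi_{k,n}'$ on $\tau_k$); bounding the numerator by Cauchy--Schwarz through $\|\beta w'\|_{0,\tau_i}=\|\gamma\eta\|_{0,\tau_i}=O(h^{p+3/2})$ and the denominator from below by $O(h^{-1})$ yields $|w_{i,n}|=O(h^{p+2})$. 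As there are only $p+1$ such terms with uniformly bounded basis functions, summation gives $\|\mathcal I_hw\|_{0,\infty}\le Ch^{p+2}\|u\|_{p+1,\infty}$, which is the assertion.

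I expect the main obstacle to be the interface element: there $\eta/\beta$ is not mean-free because $\beta$ jumps, so the cancellation used on noninterface elements is unavailable, and one must instead exploit that this element is counted only once while invoking the sharp $\rho$-dependent coefficient bound \eqref{eq: bound for uim 2}; this is precisely where the dependence of $C$ on $\beta$ and $\gamma$ enters. A secondary technical point is justifying the projection identity $\omega=\mathcal I_hw$ on the one-sided space $\tilde S_p(\mathcal{T}_h)$, where $v(b)$ is free, rather than on $S_p(\mathcal{T}_h)$.
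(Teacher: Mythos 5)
Your argument is correct and, once the identification $\omega=\mathcal I_h w$ with $w(x)=\gamma\int_a^x(u-\mathcal I_hu)/\beta\,dt$ is unwound, it reproduces the paper's own proof: the paper obtains the same nodal telescoping $\omega(x_i)-\omega(x_{i-1})=\int_{\tau_i}\frac{\gamma}{\beta}(u-\mathcal I_hu)\,dx$ (which vanishes off the interface element for $p\ge2$ by the same mean-zero cancellation, leaving a single $O(h^{p+2})$ contribution from $\tau_k$) by testing \eqref{corr:w} with $v'=1$, and the same $O(h^{p+2})$ bound on the higher expansion coefficients by testing with $\psi_{i,n}$ and $\phi_{k,n}$. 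The projection identity is a clean way to package this, but the decomposition, the key cancellation, and the estimates coincide with the paper's.
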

\begin{proof}  In each element $\tau_i$, we assume that $\omega$ has the following expansion
\begin{equation}\label{w expansion}
 \omega|_{\tau_i} =
  \left\{
    \begin{array}{ll}
      \sum\limits_{n=2}^p c_{i,n}\psi_{i,n}(x)+\omega(x_{i-1})\psi_{i,0}(x)+\omega(x_i)\psi_{i,1}(x), & \text{if}~ i\neq k, \vspace{1mm}\\
      \sum\limits_{n=2}^p c_{i,n}\phi_{i,n}(x)+\omega(x_{i-1})\phi_{i,0}(x)+\omega(x_i)\phi_{i,1}(x), & \text{if}~ i= k. 
    \end{array}
  \right.
\end{equation}
   By choosing $v=\psi_{i,n}$  and $v=\phi_{k,n}$  in \eqref{corr:w}, where $2\le n\le p$, we can find
 \[ 
c_{i,n}=
  \left\{
    \begin{array}{ll}
      \frac{2n-1}{2}\int_{\tau_i}\frac{\gamma}{\beta} (u-{\cal I}_hu)(x) P_{i,n-1}(x) dx, & \text{if}~ i\neq k, \vspace{1mm}\\
    \dfrac{1}{\langle \phi_n,\phi_n\rangle_{\tau}}\int_{\tau_i}\frac{\gamma}{\beta} (u-{\cal I}_hu)(x) L_{i,n-1}(x) dx , & \text{if}~ i= k. 
    \end{array}
  \right.
 \] 
   Apparently, by the standard approximation theory, 
\begin{equation}\label{esti:c}
    | c_{i,n}|\le C h\|u-{\cal I}_hu\|_{0,\infty}\le C h^{p+2}\|u\|_{p+1,\infty}. 
\end{equation}
   Here  the constant $C$ depends only on the coefficients $\beta$ and $\gamma$. 
    Similarly, we separately choose  $v'=P_{i,0}=1, i\neq k$ and  $v'=\phi'_{k,1}$ in \eqref{corr:w}  to 
     obtain 
 \[
     \omega(x_i)-\omega(x_{i-1})=\int_{\tau_i}\frac{\gamma}{\beta} (u-{\cal I}_hu)(x)dx,   \ \forall i. 
 \]
     In light of  \eqref{eq: interpolation error} and the orthogonal properties of (generalized) Lobotto polynomials, we know that $u-{\cal I}_hu$ is orthogonal to polynomials of degree no more than ${p-2}$. Then  for $p\ge 2$
\[
     \omega(x_i)-\omega(x_{i-1})=0,\ \ i\neq k. 
\]
       Since $\omega(x_0)=\omega(a)=0$,  we have 
\[ 
\omega(x_i) =
  \left\{
    \begin{array}{ll}
    0, & \text{if}~ i\le k-1, \vspace{1mm}\\
   \int_{\tau_k}\frac{\gamma}{\beta} (u-{\cal I}_hu)(x) dx, & \text{if}~ i \ge k. 
    \end{array}
  \right.
\]
   Consequently, 
\begin{equation}\label{esti wxi}
     |\omega(x_i)|\le C  h\|u-{\cal I}_hu\|_{0,\infty}\le C h^{p+2}\|u\|_{p+1,\infty},\ \ \forall i. 
\end{equation}
   Then the estimate \eqref{esti:w} follows from \eqref{w expansion}, \eqref{esti:c}, and \eqref{esti wxi}.
\end{proof}

    Now we are ready to show the superconvergence for general elliptic interface problems.

 \begin{theorem}\label{theo:11}
Let $\mathcal{T}_h = \{\tau_i\}_{i=1}^N$ be an partition of $\Omega$ such that the interface $\alpha\in \tau_k$.  Let $u_h\in S_p(\mathcal{T}_h)$ be the IFE solution of \eqref{eq: IFE method} where $p\ge 2$, and $u\in \tilde W_\beta^{p+2,\infty}(\Omega)$ be the exact solution of \eqref{eq: DE} - \eqref{eq: jump condition}. Then we have the following superconvergence results.  
\begin{itemize}
  \item There exists a constant $C$, depending on $p$, $\rho$, $\gamma$, $c$ such that the following estimate holds true on every noninterface element $\tau_i$, $i\ne k$.
  \begin{equation}\label{eq: supercvg Lobatto Gauss General}
  (u - u_h) (l_{im}) = Ch^{p+2}\|u\|_{p+2,\infty},~~
  (u' - u_h') (g_{in}) = Ch^{p+1}\|u\|_{p+2,\infty},
\end{equation}
  where $l_{im}$, $m=0,1,cdots,p$ are roots of $\psi_{i,p+1}$, including the mesh points, and $g_{in}$, $n=1,2,\cdots, n$ are roots of $P_{i,p}$.
  \item There exists a constant $C$, depending on $p$, $\rho$, $\gamma$, $c$ such that the following estimate holds true on the interface element $\tau_k$.
  \begin{equation}\label{eq: supercvg Lobatto Gauss inter General}
 (u - u_h) (l_{km}) = Ch^{p+2}\|u\|_{p+2,\infty},~~
  (\beta u' - \beta u_h') (g_{kn}) = Ch^{p+1}\|u\|_{p+2,\infty},
\end{equation}
where $l_{km}$, $m=0,1,\cdots,p$ are roots of $\phi_{i,p+1}$, including the mesh points, and $g_{kn}$, $n=1,2,\cdots, n$ are roots of $L_{k,p}$.
\end{itemize}
\end{theorem}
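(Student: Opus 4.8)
The plan is to transfer the superconvergence of the interpolant $\mathcal{I}_h u$, already established in the interpolation estimates \eqref{approx:super1}--\eqref{approx:super3}, to the IFE solution $u_h$ by controlling the \emph{discrete error} $e := u_h - \mathcal{I}_h u$. Setting $\eta := u - \mathcal{I}_h u$ and writing $u - u_h = \eta - e$, it suffices to show $\|e\|_{0,\infty} \le C h^{p+2}\|u\|_{p+2,\infty}$ (with $\rho$-dependence on $\tau_k$): the function-value estimates then follow by the triangle inequality with \eqref{approx:super1} and \eqref{approx:super4}, and the derivative estimates from the inverse inequality \eqref{eq: inverse} together with \eqref{approx:super2} and \eqref{approx:super3}. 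To get the equation governing $e$, I would combine the Galerkin orthogonality $a(u-u_h,v_h)=0$ (consistency holds since both $u$ and the IFE functions satisfy the flux continuity at $\alpha$) with the interpolation orthogonality \eqref{ortho:1}, which annihilates the diffusion part and yields $a(e,v_h) = (\gamma\eta',v_h) + (c\eta,v_h)$ for all $v_h \in S_p(\mathcal{T}_h)$.

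The special function $\omega$ of \eqref{corr:w} is used to absorb the troublesome convection term. Integrating $(\gamma\eta',v_h)$ by parts element by element, all boundary contributions cancel because $\eta$ vanishes at every mesh point and $\eta,v_h$ are continuous across $\alpha$ (and $\gamma$ is constant), so $(\gamma\eta',v_h) = -(\gamma\eta,v_h') = -(\beta\omega',v_h')$ by the definition of $\omega$. Rearranging gives $a(e+\omega,v_h) = (\gamma\omega',v_h) + (c\omega,v_h) + (c\eta,v_h)$. The first two terms are $O(h^{p+2})$ thanks to $\|\omega\|_{0,\infty} \le Ch^{p+2}\|u\|_{p+1,\infty}$ from \eqref{esti:w} (after one integration by parts in the first term, using $v_h\in H^1_0$), while the reaction term is treated via the key orthogonality $\eta \perp \mathbb{P}_{p-2}$ inherited from \eqref{eq: interpolation error}.

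The remaining task, and the main obstacle, is to turn this into a sharp \emph{pointwise} bound $\|e\|_{0,\infty} = O(h^{p+2})$: an energy estimate alone only delivers $\|e\|_1 = O(h^{p+1})$, one order short. I would therefore first record the energy estimate from the stability of the IFE method for $h$ small, noting that $(\gamma v',v)=0$ on $H_0^1$ so that the principal part controls $|e|_1$, and then upgrade it by a duality / discrete Green's function argument. The convection contribution has been rendered harmless by $\omega$ through \eqref{esti:w}; the reaction contribution $(c\eta,v_h)$ is reduced to $O(h^{p+2})$ by using $\eta \perp \mathbb{P}_{p-2}$ to subtract element averages of the Green's function and pick up an extra power of $h$. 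Since $\|\omega\|_{0,\infty}$ is itself $O(h^{p+2})$, this yields $\|e\|_{0,\infty} = O(h^{p+2})$. This step requires particular care on the interface element $\tau_k$, where the relevant dual/Green's function is only piecewise smooth across $\alpha$ and the inverse inequality \eqref{eq: inverse} carries the factor $\rho$, which is precisely the source of the $\rho$-dependence of the interface constant.

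Once $\|e\|_{0,\infty} \le C h^{p+2}\|u\|_{p+2,\infty}$ is established, the first estimates in \eqref{eq: supercvg Lobatto Gauss General} and \eqref{eq: supercvg Lobatto Gauss inter General} follow by adding the interpolation superconvergence at the (generalized) Lobatto points, now including the mesh points. For the derivatives, the inverse inequality \eqref{eq: inverse} gives $\|e'\|_{0,\infty} = O(h^{p+1})$ on noninterface elements and $\|\beta e'\|_{0,\infty} = O(\rho\, h^{p+1})$ on $\tau_k$; combining these with the interpolation estimates \eqref{approx:super2} and \eqref{approx:super3} at the (generalized) Gauss points produces the second estimates in both displays.
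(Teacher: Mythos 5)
Your overall architecture coincides with the paper's: both introduce the correction function $\omega$ of \eqref{corr:w} to absorb the convection term, reduce the problem to a pointwise bound on the discrete error $u_h-\mathcal{I}_hu$, and finish by adding the interpolation superconvergence \eqref{approx:super1}--\eqref{approx:super3} together with the inverse inequality \eqref{eq: inverse} for the derivative statements. The divergence occurs at the one step that actually carries the theorem, and there your proposal has a gap. You assert that ``an energy estimate alone only delivers $\|e\|_1=O(h^{p+1})$, one order short,'' and therefore reach for a duality / discrete Green's function argument. That diagnosis is incorrect, and the substitute tool is not established. The paper gets the full order $O(h^{p+2})$ \emph{inside the energy estimate}: since $\eta:=u-\mathcal{I}_hu$ satisfies $\eta\perp\mathbb{P}_{p-2}(\tau_i)$ on every element (in particular $\int_{\tau_i}\eta\,dx=0$ for $p\ge2$), the antiderivative $D_x^{-1}\eta$ vanishes at every mesh point, so one may integrate the reaction term by parts elementwise, $(c\eta,v_h)=-(cD_x^{-1}\eta,v_h')$, with $\|D_x^{-1}\eta\|_{0,\infty}\le h\|\eta\|_{0,\infty}=O(h^{p+2})$. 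Combined with $\|\omega\|_{0,\infty}=O(h^{p+2})$ from \eqref{esti:w}, this gives $|a(u-u_I,v_h)|\le Ch^{p+2}\|u\|_{p+1,\infty}\|v_h\|_1$ with $u_I=\mathcal{I}_hu+\omega$, hence $\|u_h-u_I\|_1\le Ch^{p+2}\|u\|_{p+1,\infty}$ by coercivity. In one dimension the passage from $H^1$ to $L^\infty$ is then free: $(u_h-u_I)(a)=0$ gives $\|u_h-u_I\|_{0,\infty}\le C|u_h-u_I|_1$. No Green's function is needed.

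By contrast, your route requires pointwise estimates for a discrete Green's function of the IFE space (including its behavior across the interface element, where it is only piecewise smooth), none of which is available off the shelf or supplied in your sketch; establishing it would be a substantial piece of analysis in its own right. The ingredient you correctly identified --- the orthogonality $\eta\perp\mathbb{P}_{p-2}$ gaining an extra power of $h$ --- is exactly what closes the argument, but it should be deployed directly against the test function via $D_x^{-1}$ in the energy identity rather than against a Green's function. With that substitution your proof becomes essentially the paper's. Two minor points: the cancellation of the convection term should be checked carefully for signs against \eqref{corr:w} (you want $(\gamma\eta',v_h)$ and $(\beta\omega',v_h')$ to cancel after one integration by parts, which fixes the sign convention in the definition of $\omega$); and the nodal superconvergence claimed in the theorem at the mesh points themselves comes out of the same $L^\infty$ bound on $u_h-\mathcal{I}_hu$, since $\eta$ vanishes there exactly --- your triangle-inequality step covers this, so no separate argument is needed.
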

\begin{proof}  First,  let 
\[
   u_I={\cal I}_hu+\omega, 
\]
  where $\omega$ is defined by \eqref{corr:w}.  By  \eqref{eq: IFE method}  and the coercivity of the bilinear form of the IFE method,  we have
\begin{eqnarray*}
   \|u_h-u_I\|_{1}^{2}
& \leq & C a(u_h-u_I,u_h-u_I)
 =C a(u-u_I,u_h-u_I). 
\end{eqnarray*}
  By \eqref {ortho:1} and \eqref{corr:w}, we have 
\begin{eqnarray*}
    |a(u-u_I,v)|&=&|(c(u-{\cal I}_hu),v)-(\gamma \omega, v')-(c \omega,v)|\\
        &=&|-(cD_x^{-1}(u-{\cal I}_hu),v')-(\gamma \omega, v')-(c \omega,v)|\\
        &\le & C \left( h \|u-{\cal I}_hu\|_{0,\infty}+\| \omega\|_{0,\infty}\right)\|v\|_{1}, \ \ \forall v\in S_p({\cal T}_h), 
\end{eqnarray*}
   where in the second step, we have used the integration by parts, and the fact that 
 \[
    D_x^{-1}(u-{\cal I}_hu)(x_{i})= D_x^{-1}(u-{\cal I}_hu)(x_{i-1})=0. 
 \]
    Consequently,  
 \[
      \|u_h-u_I\|_{1}\le C \left( h \|u-{\cal I}_hu\|_{0,\infty}+\| \omega\|_{0,\infty}\right) \le Ch^{p+2}\|u\|_{p+1,\infty}. 
  \]
    Noticing  that $(u_h-u_I)(a)=0$,  we have 
 \[
     (u_h-u_I)(x)=\int_{a}^x  (u_h-u_I)'(x) dx,  
 \]
   which yields 
 \[
      \|u_h-u_I\|_{0,\infty}\le C |u_h-u_I|_1\le Ch^{p+2}\|u\|_{p+1,\infty}, 
 \]
   and thus, 
  \[
      \|u_h-{\cal I}_hu\|_{0,\infty}\le  Ch^{p+2}\|u\|_{p+1,\infty}+\|\omega\|_{0,\infty}\le C h^{p+2}\|u\|_{p+1,\infty}. 
 \]
    Since $u_h-{\cal I}_hu\in S_p( {\cal T}_h)$, the inverse inequality holds. Then 
  \[
      |u_h-{\cal I}_hu|_{1,\infty}\le  C h^{-1}  \|u_h-{\cal I}_hu\|_{0,\infty}\le C h^{p+1}\|u\|_{p+1,\infty}. 
 \]  
      Then \eqref{eq: supercvg Lobatto Gauss General} and \eqref{eq: supercvg Lobatto Gauss inter General} follow from \eqref{approx:super1}- \eqref{approx:super2}. 
\end{proof}


\begin{remark}
    As we may recall,  the convergence rates $O(h^{p+2})$ at the Lobatto (generalized Lobatto) points and 
     $O(h^{p+1})$ at the Gauss (generalized Lobatto) points are the same as these of the counterpart FEM. 
    While,  as for the convergence rate at mesh nodes,  the order $O(h^{p+2})$ in the Theorem \ref{theo:11} is lower than  that  of the FEM for $p\ge 3$, which is  $O(h^{2p})$.  Nevertheless, our numerical experiments demonstrate that the convergence rate at mesh points sometimes might be even higher than $O(h^{p+2})$.
\end{remark}

\begin{remark}
For problems with multiple interface points, the analytical results in Theorem 4.1 and Theorem 4.2 are still true. Example 5.2 provides a numerical evidence for this scenario.
\end{remark}

\begin{remark}
In general, there is no superconvergence at the interface point, because the IFE method treats the interface as an interior point. Even if there is no coefficient jump, the IFE method (becomes standard FE method) has no superconvergence behavior at a random interior point, unless it coincides with Lobatto or Gauss points.
\end{remark}

\section{Numerical Experiments}
In this section, we present some numerical experiments to demonstrate the superconvergence of IFE methods. 

We use a family of uniform mesh $\{\mathcal{T}_h\}, h>0$ where $h$ denotes the mesh size. We will test linear ($p$=1), quadratic ($p$=2), and cubic ($p$=3) IFE approximation. In the following experiments, we always start from a mesh consisting of eight elements. Due to the finite machine precision, we choose different sets of meshes for different polynomial degrees $p$. The convergence rate is calculated using linear regression of the errors.

We compute the error $e_h=u_h-u$ in the following norms
\begin{eqnarray*}
  &\|e_h\|_{node} = \max\limits_{x\in{\{x_i\}}}|u_h(x)-u(x)|,
  &\|e_h\|_{L^\infty} = \max\limits_{x\in\Omega}|u_h(x)-u(x)|, \\
  &\|e_h\|_{Lob} = \max\limits_{x\in\{l_{ip}\}}|u_h(x)-u(x)|,
  &\|\beta e_h'\|_{Gau} = \max\limits_{x\in\{g_{ip}\}}|\beta u_h'(x)-\beta u'(x)|, \\
  &\|e_h\|_{L^2} = \left(\displaystyle\int_\Omega|u_h-u|^2dx\right)^{\frac{1}{2}},
  &|e_h|_{H^1} = \left(\int_\Omega|u_h'-u'|^2dx\right)^{\frac{1}{2}}.
\end{eqnarray*}
Here, $\|e_h\|_{node}$ denotes the maximum error over all the nodes (mesh points). $\|e_h\|_{L^\infty}$ is the infinity norm over the whole domain $\Omega$. To compute it, we select eight uniformly distributed points on each non-interface element, and select 10 points in each sub-element of an interface element. Among all these sample points, we compute the largest discrepancy from the exact solution. $\|\beta e_h'\|_{Gau}$ is the maximum error of flux over all (generalized) Legendre points. $\|e_h\|_{Lob}$ is maximum error over all (generalized) Lobatto points, respectively. $\|e_h\|_{L^2}$ and $|e_h|_{H^1}$ are the standard Sobolev $L^2$- and semi-$H^1$- norms.

\begin{example} \textbf(One interface point)
In this example, we consider an interface problem with one interface point. We use the following example as the exact solution 
\begin{equation}\label{eq: IFE ex1}
  u(x) =
  \left\{
    \begin{array}{ll}
      \dfrac{1}{\beta^-}\cos(x), & \text{if}~~x \in [0,\alpha), \\
      \dfrac{1}{\beta^+}\cos(x) + \left(\dfrac{1}{\beta^-} - \dfrac{1}{\beta^+}\right)\cos(\alpha), & \text{if}~~x \in (\alpha,1]. \\
    \end{array}
  \right.
\end{equation}
It is easy to verify that
\begin{equation*}
  \jump{u(\alpha)} = 0,~~~
  \bigjump{\beta u^{(j)}(\alpha)} = 0, ~~\forall j \geq 1.
\end{equation*}
\end{example}

We consider the general elliptic interface problem, and choose the coefficient $(\beta^-,\beta^+) = (1,5)$, $\gamma=1$, $c = 10$, and the interface $\alpha = \pi/6$. Errors of the IFE solution of degree $p=1,2,3$ in the aforementioned norms are reported in Tables \ref{table: general elliptic P1-IFE-1-5}, \ref{table: general elliptic P2-IFE-1-5}, and \ref{table: general elliptic P3-IFE-1-5}, respectively.  At (generalized) Legendre-Gauss points and (generalized) Lobatto points (for $p$ = 2,3), the convergence rates are $O(h^{p+1})$ and $O(h^{p+2})$, respectively. At mesh points, the IFE solutions $u_h$ demonstrate a superconvergence order of at least $O(h^{p+2})$ for $p=2,3$, compared to the rate $O(h^{p+1})$ in the infinity norm $\|\cdot\|_{L^\infty}$. These data indicate that at these special points, IFE solution are super-close to the exact solution, and the convergence rates are one order higher than optimal rate. Moreover, the convergence rates are $O(h^{p+1})$ and $O(h^p)$ in $\|\cdot\|_{L^2}$ and $|\cdot|_{H^1}$ norm, which is consistent with the diffusion interface problem in \cite{2009AdjeridLin}.

\begin{table}[thb]
\begin{center}
\begin{small}
\begin{tabular}{|r|c|c|c|c|c|}
\hline
$1/h$ & $\|e_h\|_{node}$ & $\|e_h\|_{L^\infty}$ & $\|\beta e_h'\|_{Gau}$ &$\|e_h\|_{L^2}$ & $\|e_h\|_{H^1}$\\
\hline
    8 & 5.71e-05 & 1.92e-03 & 1.07e-03 & 9.97e-04 & 2.51e-02 \\
  16 & 1.43e-05 & 4.81e-04 & 2.75e-04 & 2.48e-04 & 1.24e-02 \\
  32 & 3.25e-06 & 1.20e-04 & 6.98e-05 & 6.21e-05 & 6.26e-03 \\
  64 & 5.44e-07 & 3.01e-05 & 1.75e-05 & 1.56e-05 & 3.14e-03 \\
128 & 2.07e-07 & 7.53e-06 & 4.40e-06 & 3.91e-06 & 1.58e-03 \\
256 & 5.16e-08 & 1.88e-06 & 1.10e-06 & 9.78e-07 & 7.88e-04 \\
512 & 1.29e-08 & 4.71e-07 & 2.76e-07 & 2.44e-07 & 3.94e-04 \\
\hline
rate & 2.02        & 1.99        & 1.99        & 2.00         & 1.00  \\
\hline\end{tabular}
\caption{Error of $P_1$ IFE Solution with $\beta=[1,5]$, $\alpha = \pi/6$, $\gamma=1$, $c=1$.}
\label{table: general elliptic P1-IFE-1-5}
\end{small}
\end{center}
\end{table}

\begin{table}[thb]
\begin{center}
\begin{small}
\begin{tabular}{|r|c|c|c|c|c|c|}
\hline
$1/h$ & $\|e_h\|_{node}$ & $\|e_h\|_{L^\infty}$ &$\|e_h\|_{Lob}$ & $\|\beta e_h'\|_{Gau}$ &$\|e_h\|_{L^2}$ & $\|e_h\|_{H^1}$\\
\hline
    8 & 3.69e-08 & 6.69e-06 & 2.98e-07 & 9.74e-06 & 2.51e-06 & 1.31e-04 \\
  16 & 5.22e-09 & 8.85e-07 & 1.63e-08 & 1.25e-06 & 3.17e-07 & 3.33e-05 \\
  24 & 1.17e-09 & 2.67e-07 & 3.13e-09 & 3.67e-07 & 9.47e-08 & 1.48e-05 \\
  32 & 1.77e-10 & 1.14e-07 & 1.20e-09 & 1.55e-07 & 3.97e-08 & 8.25e-06 \\
  40 & 3.60e-11 & 5.88e-08 & 5.57e-10 & 7.98e-08 & 2.07e-08 & 5.38e-06 \\
  48 & 2.42e-11 & 3.54e-08 & 2.65e-10 & 4.67e-08 & 1.21e-08 & 3.76e-06 \\
  56 & 2.92e-11 & 2.22e-08 & 1.20e-10 & 2.93e-08 & 7.57e-09 & 2.76e-06 \\
\hline
rate & 4.11        & 2.93        & 3.92        & 2.99        & 2.98         & 1.99  \\
\hline
\end{tabular}
\caption{Error of $P_2$ IFE Solution with $\beta=[1,5]$, $\alpha = \pi/6$, $\gamma=1$, $c=1$.}
\label{table: general elliptic P2-IFE-1-5}
\end{small}
\end{center}
\end{table}

\begin{table}[thb]
\begin{center}
\begin{small}
\begin{tabular}{|r|c|c|c|c|c|c|}
\hline
$1/h$ & $\|e_h\|_{node}$ & $\|e_h\|_{L^\infty}$ &$\|e_h\|_{Lob}$ & $\|\beta e_h'\|_{Gau}$ &$\|e_h\|_{L^2}$ & $\|e_h\|_{H^1}$\\
\hline
    8 & 4.24e-10 & 1.18e-07 & 1.65e-09 & 6.97e-08 & 5.59e-08 & 4.27e-06 \\
  10 & 1.65e-10 & 4.83e-08 & 5.38e-10 & 2.88e-08 & 2.29e-08 & 2.19e-06 \\
  12 & 6.63e-11 & 2.33e-08 & 2.16e-10 & 1.40e-08 & 1.11e-08 & 1.27e-06 \\
  14 & 2.57e-11 & 1.26e-08 & 1.00e-10 & 7.58e-09 & 5.97e-09 & 7.95e-07 \\
  16 & 8.77e-12 & 7.37e-09 & 5.13e-11 & 4.45e-09 & 3.50e-09 & 5.32e-07 \\
  18 & 1.74e-12 & 4.60e-09 & 2.85e-11 & 2.78e-09 & 2.18e-09 & 3.73e-07 \\
  20 & 1.00e-12 & 3.02e-09 & 1.69e-11 & 1.82e-09 & 1.43e-09 & 2.72e-07 \\
\hline
rate & 6.79     & 4.00     & 4.00     & 5.00     & 4.00     & 3.00  \\
\hline\end{tabular}
\caption{Error of $P_3$ IFE Solution with $\beta=[1,5]$, $\alpha = \pi/6$, $\gamma=1$, $c=1$.}
\label{table: general elliptic P3-IFE-1-5}
\end{small}
\end{center}
\end{table}

Next we illustrate superconvergence behavior at roots of (generalized) orthogonal polynomials. In Figures \ref{fig: error P1 IFE},  \ref{fig: error P2 IFE}, and \ref{fig: error P3 IFE}, we list the plots of solution error $u_h-u$ and the flux error $\beta u_h'-\beta u'$ on the mesh consisting of eight elements. Also, we highlight the roots of corresponding orthogonal polynomials by star with red color. Clearly, at those points, errors are much smaller compared to other points. Note that the interface $\alpha\in (0.5,0.6)$, and the red-color-marked points on this interface element are roots of generalized Lobatto/Legendre polynomials. 

\begin{figure}[htb]
  \centering
  \includegraphics[width=.48\textwidth]{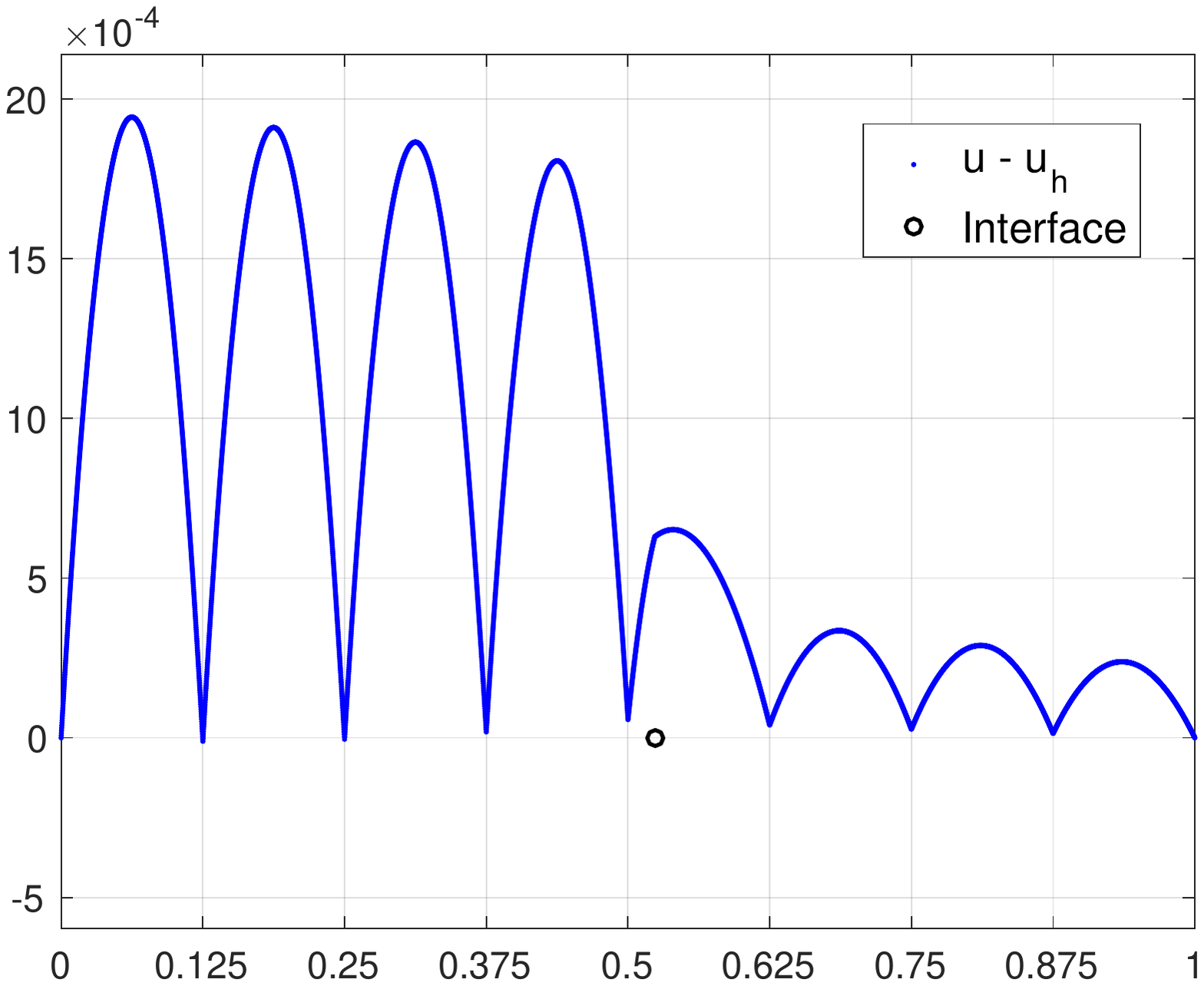}~
  \includegraphics[width=.48\textwidth]{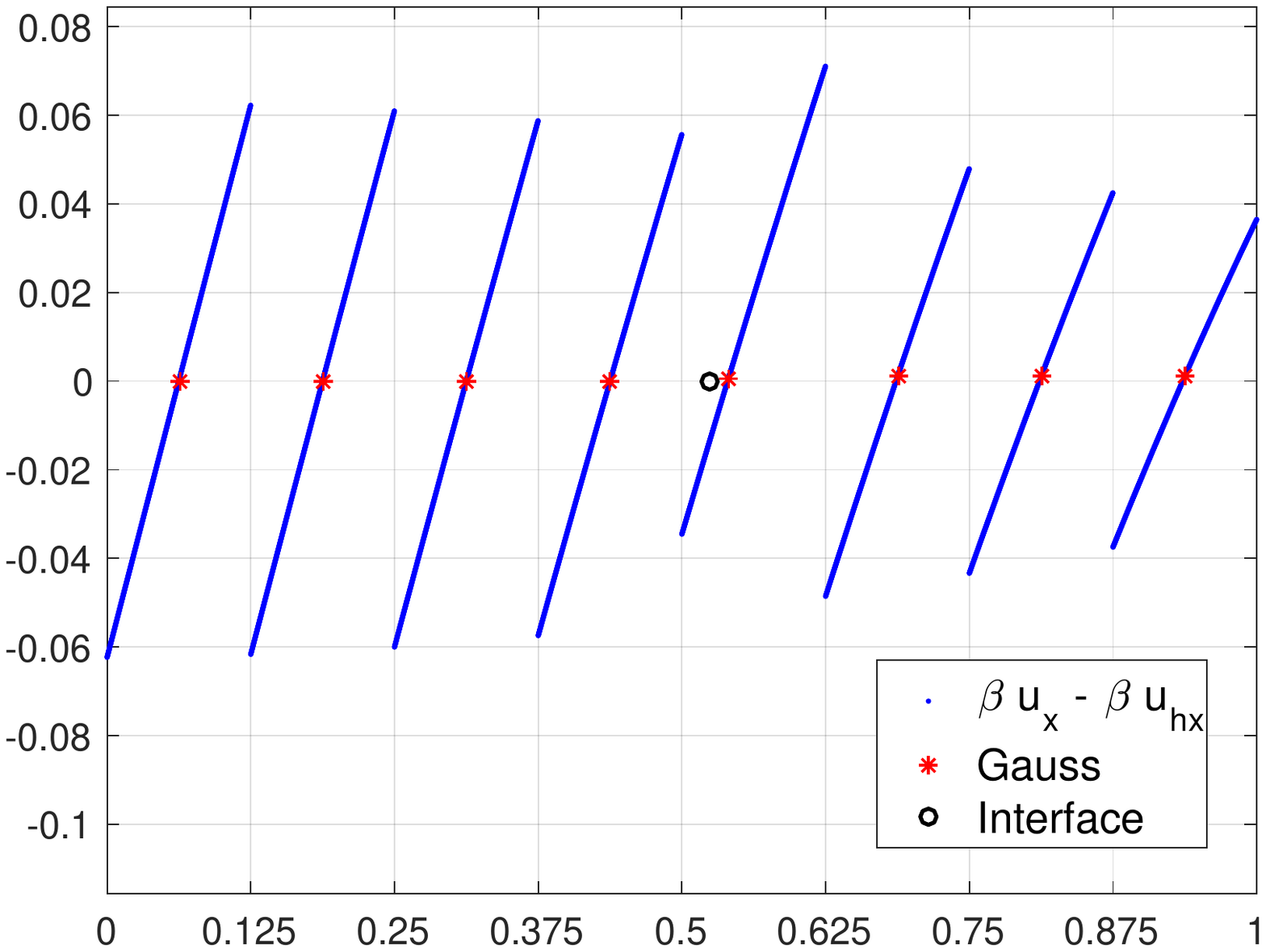}
  \caption{Error and flux error of $P_1$ IFE solution. $\beta = [1,5]$, $\alpha = \pi/6$, $\gamma=1$, $c=1$.}
  \label{fig: error P1 IFE}
\end{figure}

\begin{figure}[htb]
  \centering
  \includegraphics[width=.48\textwidth]{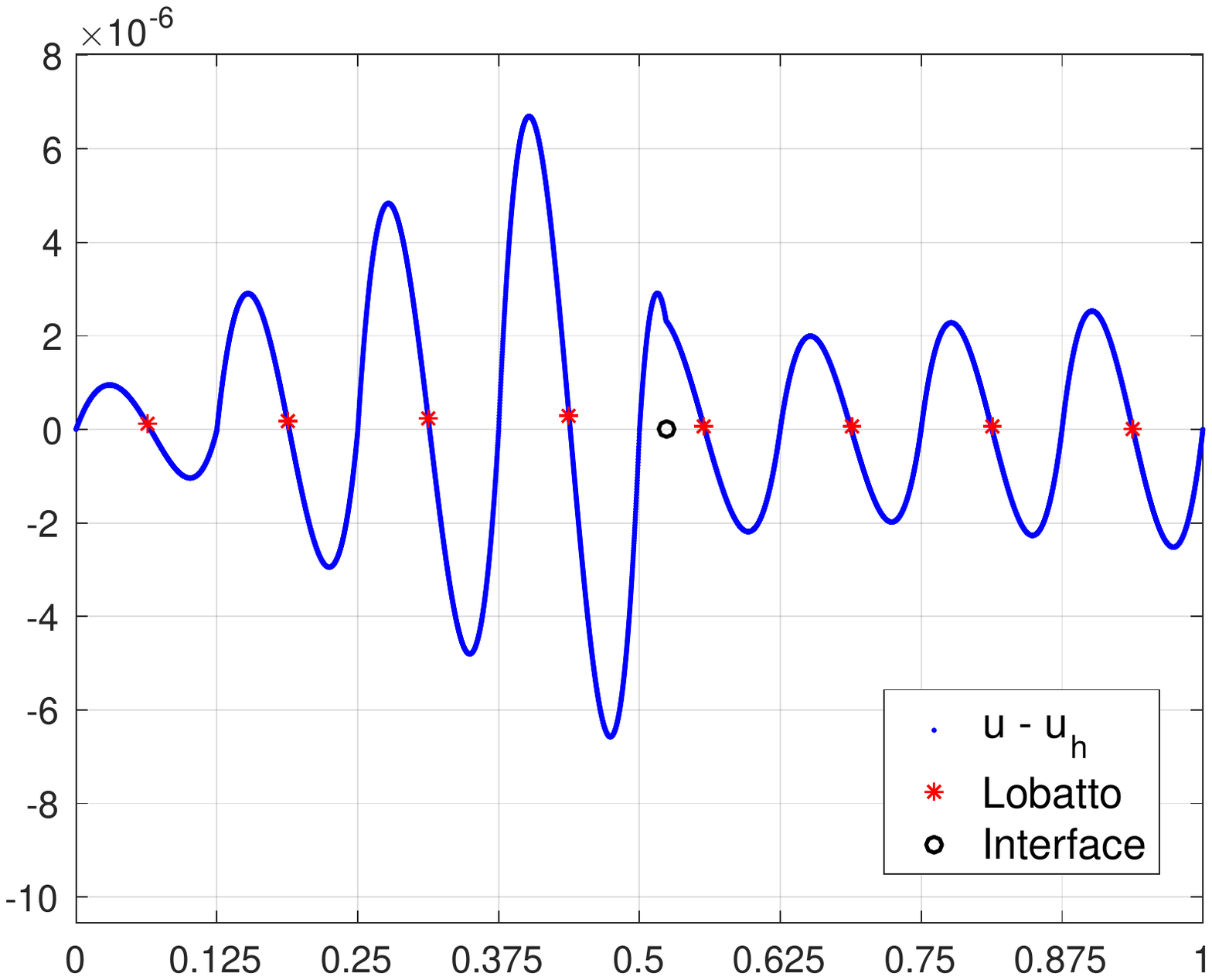}~
  \includegraphics[width=.48\textwidth]{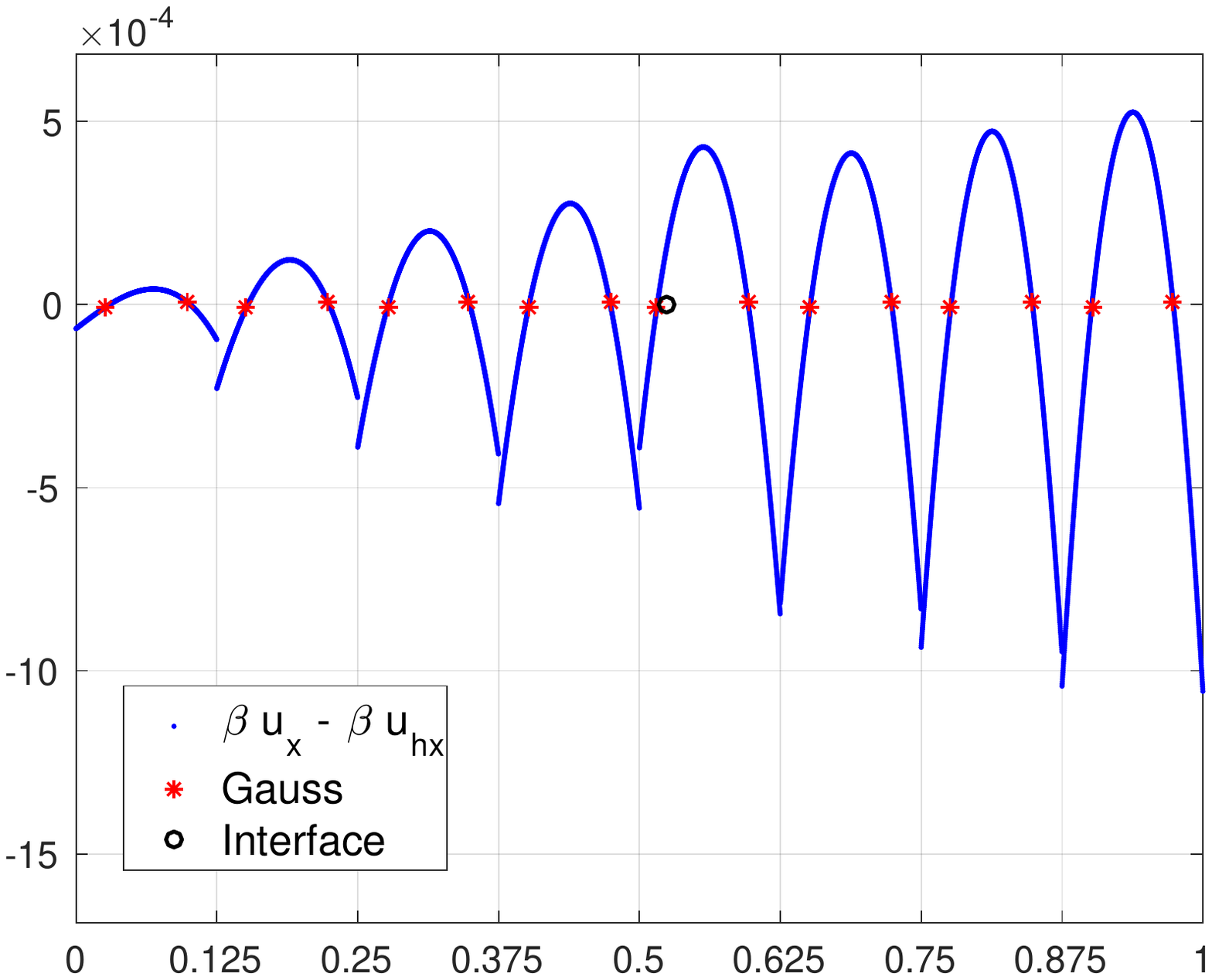}
  \caption{Error and flux error of $P_2$ IFE solution. $\beta = [1,5]$, $\alpha = \pi/6$, $\gamma=1$, $c=1$.}
  \label{fig: error P2 IFE}
\end{figure}

\begin{figure}[htb]
  \centering
  \includegraphics[width=.48\textwidth]{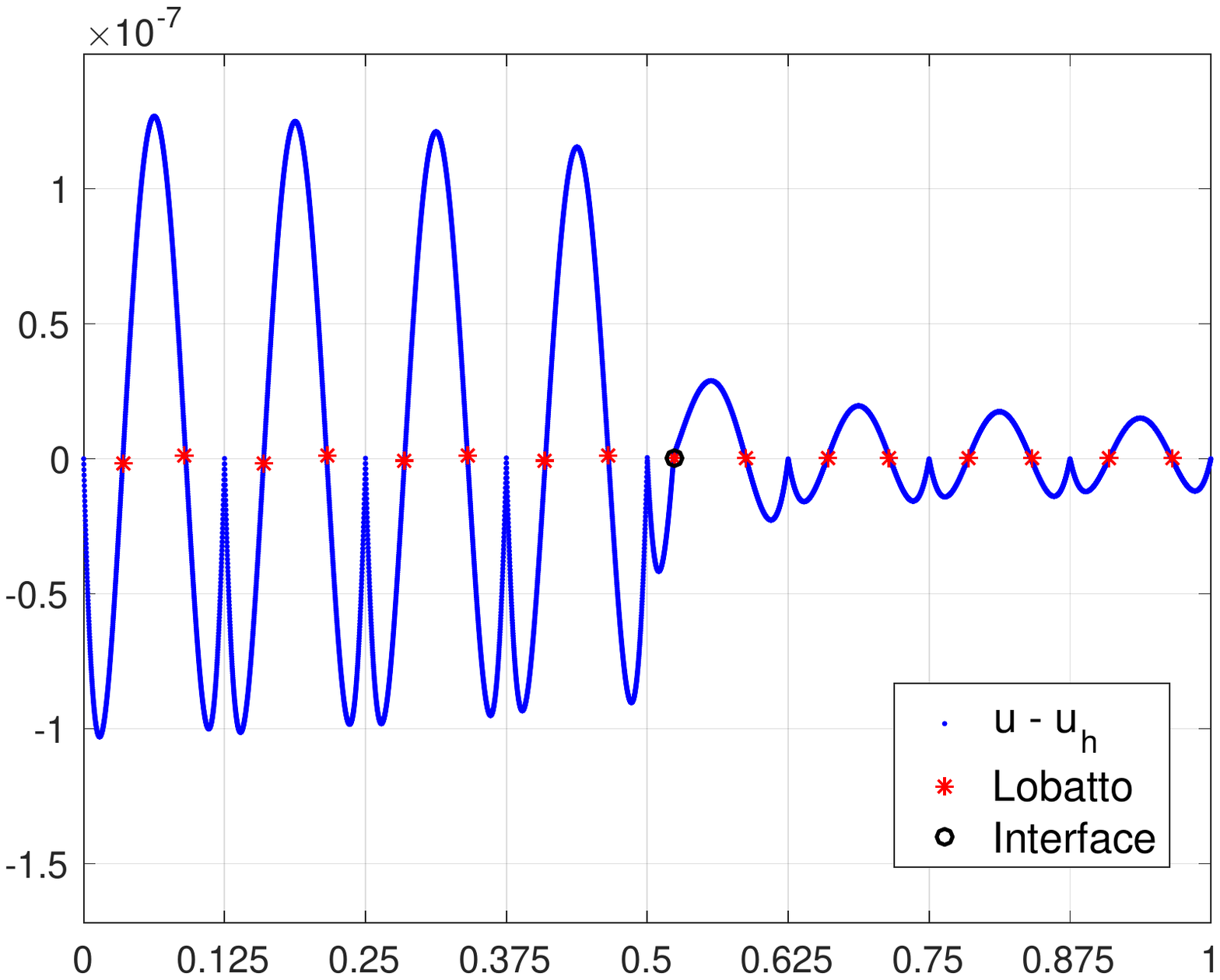}~
  \includegraphics[width=.48\textwidth]{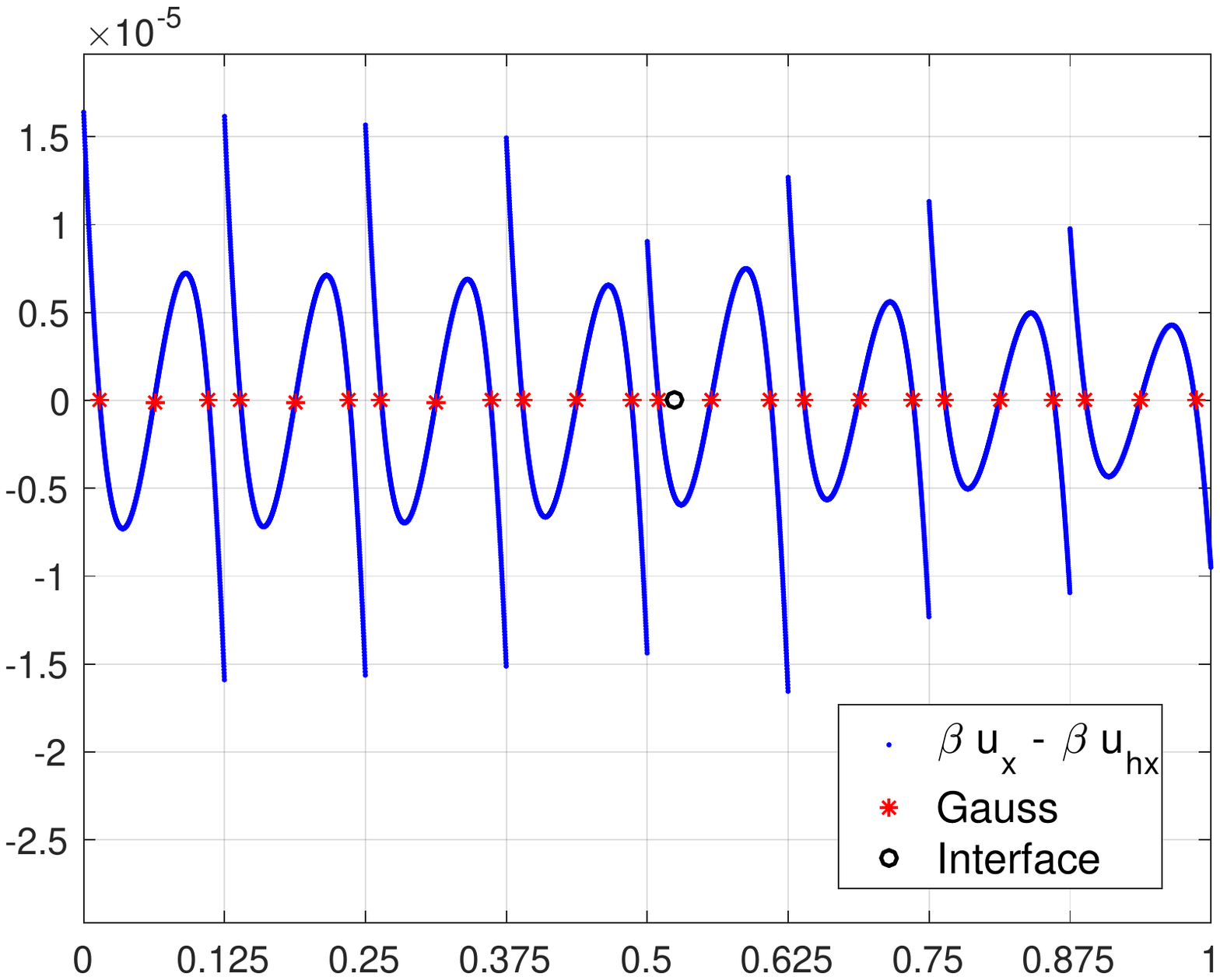}
  \caption{Error and flux error of $P_3$ IFE solution. $\beta^- = 1$, $\beta^+ = 10$, $\alpha = \pi/6$, $\gamma=1$, $c=10$.}
  \label{fig: error P3 IFE}
\end{figure}

\begin{example}\textbf(Multiple interface points)
In this example, we use IFE method to interface problems with multiple discontinuities. In particular, we consider the following function as the exact solution, where the coefficient function $\beta$ has two discontinuities at $\alpha_1$ and $\alpha_2$. 
\begin{equation}\label{eq: IFE ex2}
  u(x) =
  \left\{
    \begin{array}{ll}
      \dfrac{1}{\beta_1}\cos(x), & \text{if}~~x \in [0,\alpha_1), \\
      \dfrac{1}{\beta_2}\cos(x) + \left(\dfrac{1}{\beta_1} - \dfrac{1}{\beta_2}\right)\cos(\alpha_1), & \text{if}~~x \in (\alpha_1,\alpha_2], \\
      \dfrac{1}{\beta_3}\cos(x) + \left(\dfrac{1}{\beta_1} - \dfrac{1}{\beta_2}\right)\cos(\alpha_1) + \left(\dfrac{1}{\beta_2} - \dfrac{1}{\beta_3}\right)\cos(\alpha_2) , & \text{if}~~x \in (\alpha_2,1]. \\
    \end{array}
  \right.
\end{equation}
We set the interface points $\alpha_1 = \frac{\pi}{6}$, and $\alpha_2 =  \frac{\pi}{6}+0.06$. They separate the domain into three subdomiains, on which the diffusion coefficients are chosen as $\beta_1 = 1$, $\beta_2 = 5$, $\beta_3 = 100$.
It can be easy to verify that
\begin{equation*}
  \jump{u(\alpha_i)} = 0,~~~
  \bigjump{\beta u^{(j)}(\alpha_i)} = 0, ~~\forall j \geq 1, ~i = 1,2.
\end{equation*}
\end{example}

\begin{table}[thb]
\begin{center}
\begin{small}
\begin{tabular}{|r|c|c|c|c|c|}
\hline
$1/h$ & $\|e_h\|_{node}$ & $\|e_h\|_{L^\infty}$ & $\|\beta e_h'\|_{Gau}$ &$\|e_h\|_{L^2}$ & $\|e_h\|_{H^1}$\\
\hline
    8 & 2.71e-05 & 1.92e-03 & 1.38e-03 & 9.67e-04 & 2.46e-02 \\
  16 & 5.26e-06 & 4.81e-04 & 3.48e-04 & 2.42e-04 & 1.24e-02 \\
  32 & 1.46e-06 & 1.20e-04 & 8.78e-05 & 6.06e-05 & 6.20e-03 \\
  64 & 3.86e-07 & 3.01e-05 & 2.20e-05 & 1.52e-05 & 3.11e-03 \\
128 & 1.02e-07 & 7.53e-06 & 5.52e-06 & 3.82e-06 & 1.56e-03 \\
256 & 2.56e-08 & 1.88e-06 & 1.38e-06 & 9.55e-07 & 7.81e-04 \\
512 & 6.40e-09 & 4.71e-07 & 3.45e-07 & 2.39e-07 & 3.91e-04 \\
\hline
rate & 1.98     & 2.00     & 1.99     & 2.00    & 1.00  \\
\hline\end{tabular}
\caption{Error of $P_1$ IFE Solution with $\beta=\{1,5,100\}$, $\alpha = \{\dfrac{\pi}{6},\dfrac{\pi}{6}+0.06\}$, $\gamma=1$, $c=1$.}
\label{table: general elliptic 2pt P1-IFE}
\end{small}
\end{center}
\end{table}

\begin{table}[thb]
\begin{center}
\begin{small}
\begin{tabular}{|r|c|c|c|c|c|c|}
\hline
$1/h$ & $\|e_h\|_{node}$ & $\|e_h\|_{L^\infty}$ &$\|e_h\|_{Lob}$ & $\|\beta e_h'\|_{Gau}$ &$\|e_h\|_{L^2}$ & $\|e_h\|_{H^1}$\\
\hline
    8 & 2.89e-08 & 6.70e-06 & 3.08e-07 & 9.77e-06 & 2.23e-06 & 1.17e-04 \\
  16 & 6.26e-09 & 8.84e-07 & 1.54e-08 & 1.45e-06 & 2.89e-07 & 3.05e-05 \\
  24 & 1.36e-09 & 2.67e-07 & 2.95e-09 & 3.67e-07 & 8.66e-08 & 1.35e-05 \\
  32 & 2.06e-10 & 1.14e-07 & 1.17e-09 & 1.55e-07 & 3.62e-08 & 7.53e-06 \\
  40 & 4.12e-11 & 5.87e-08 & 5.46e-10 & 9.17e-08 & 1.90e-08 & 4.93e-06 \\
  48 & 2.69e-11 & 3.54e-08 & 2.60e-10 & 4.67e-08 & 1.11e-08 & 3.46e-06 \\
  56 & 3.50e-11 & 2.22e-08 & 1.15e-10 & 2.93e-08 & 6.94e-09 & 2.53e-06 \\
\hline
rate & 3.95     & 2.93     & 3.95     & 2.97     & 3.00     & 1.97  \\
\hline
\end{tabular}
\caption{Error of $P_2$ IFE Solution with $\beta=\{1,5,100\}$, $\alpha = \{\dfrac{\pi}{6},\dfrac{\pi}{6}+0.06\}$, $\gamma=1$, $c=1$.}
\label{table: general elliptic 2pt P2-IFE}
\end{small}
\end{center}
\end{table}

\begin{table}[thb]
\begin{center}
\begin{small}
\begin{tabular}{|r|c|c|c|c|c|c|}
\hline
$1/h$ & $\|e_h\|_{node}$ & $\|e_h\|_{L^\infty}$ &$\|e_h\|_{Lob}$ & $\|\beta e_h'\|_{Gau}$ &$\|e_h\|_{L^2}$ & $\|e_h\|_{H^1}$\\
\hline
    8 & 2.01e-10 & 1.18e-07 & 1.66e-09 & 6.94e-08 & 5.51e-08 & 4.20e-06 \\
  10 & 9.06e-10 & 4.83e-08 & 5.41e-10 & 2.87e-08 & 2.26e-08 & 2.15e-06 \\
  12 & 7.44e-11 & 2.33e-08 & 2.16e-10 & 1.40e-08 & 1.10e-08 & 1.25e-06 \\
  14 & 2.94e-11 & 1.26e-08 & 9.99e-11 & 7.58e-09 & 5.92e-09 & 7.88e-07 \\
  16 & 1.00e-11 & 7.37e-09 & 5.13e-11 & 4.45e-09 & 3.47e-09 & 5.27e-07 \\
  18 & 1.70e-12 & 4.60e-09 & 2.85e-11 & 2.78e-09 & 2.16e-09 & 3.70e-07 \\
  20 & 1.27e-12 & 3.02e-09 & 1.69e-11 & 1.82e-09 & 1.42e-09 & 2.70e-07 \\
\hline
rate & 5.76     & 4.00     & 5.01     & 3.97     &  3.99    & 3.00  \\
\hline\end{tabular}
\caption{Error of $P_3$ IFE Solution with $\beta=\{1,5,100\}$, $\alpha = \{\dfrac{\pi}{6},\dfrac{\pi}{6}+0.06\}$, $\gamma=1$, $c=1$.}
\label{table: general elliptic 2pt P3}
\end{small}
\end{center}
\end{table}

\begin{figure}[htb]
  \centering
  \includegraphics[width=.48\textwidth]{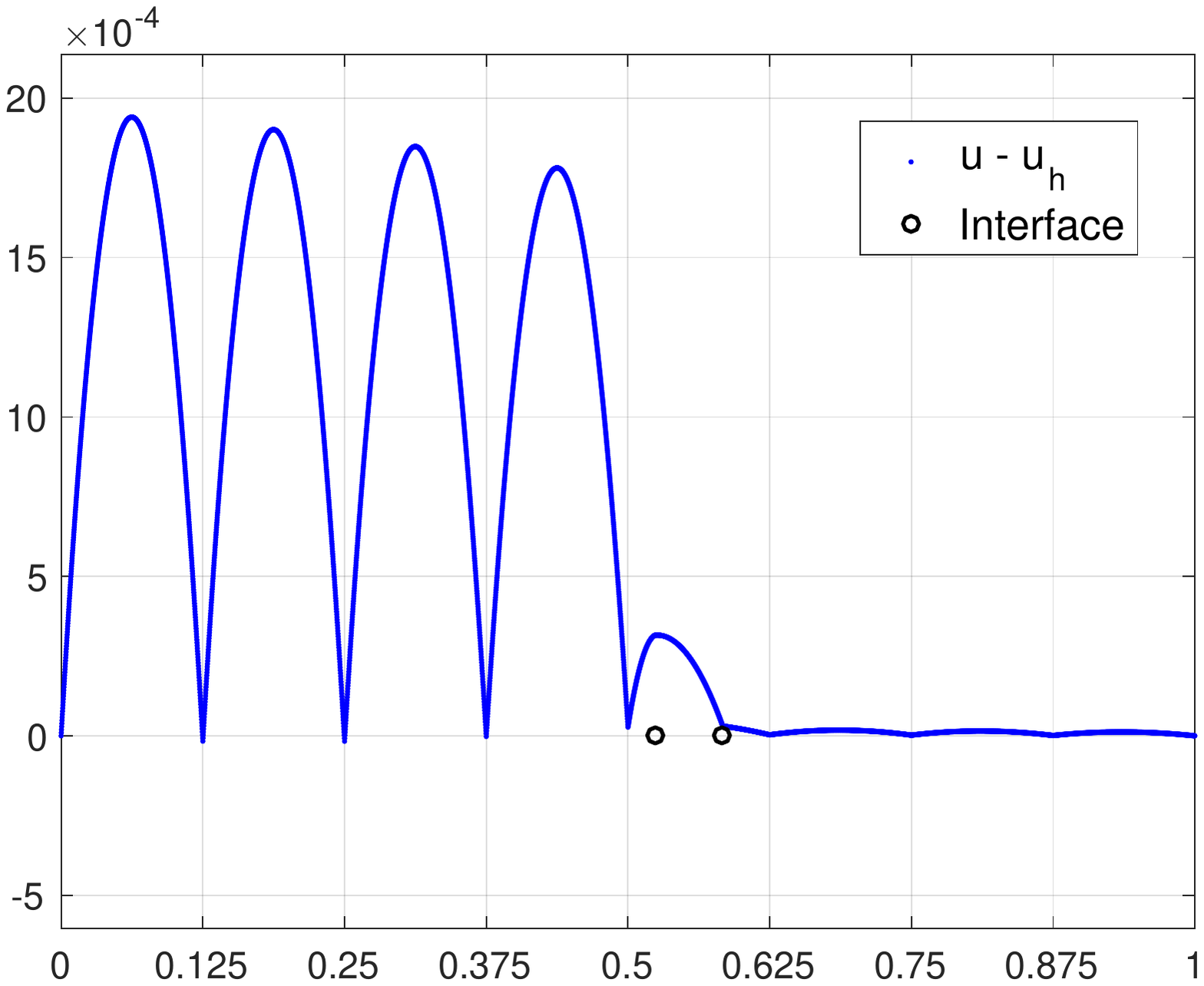}~
  \includegraphics[width=.48\textwidth]{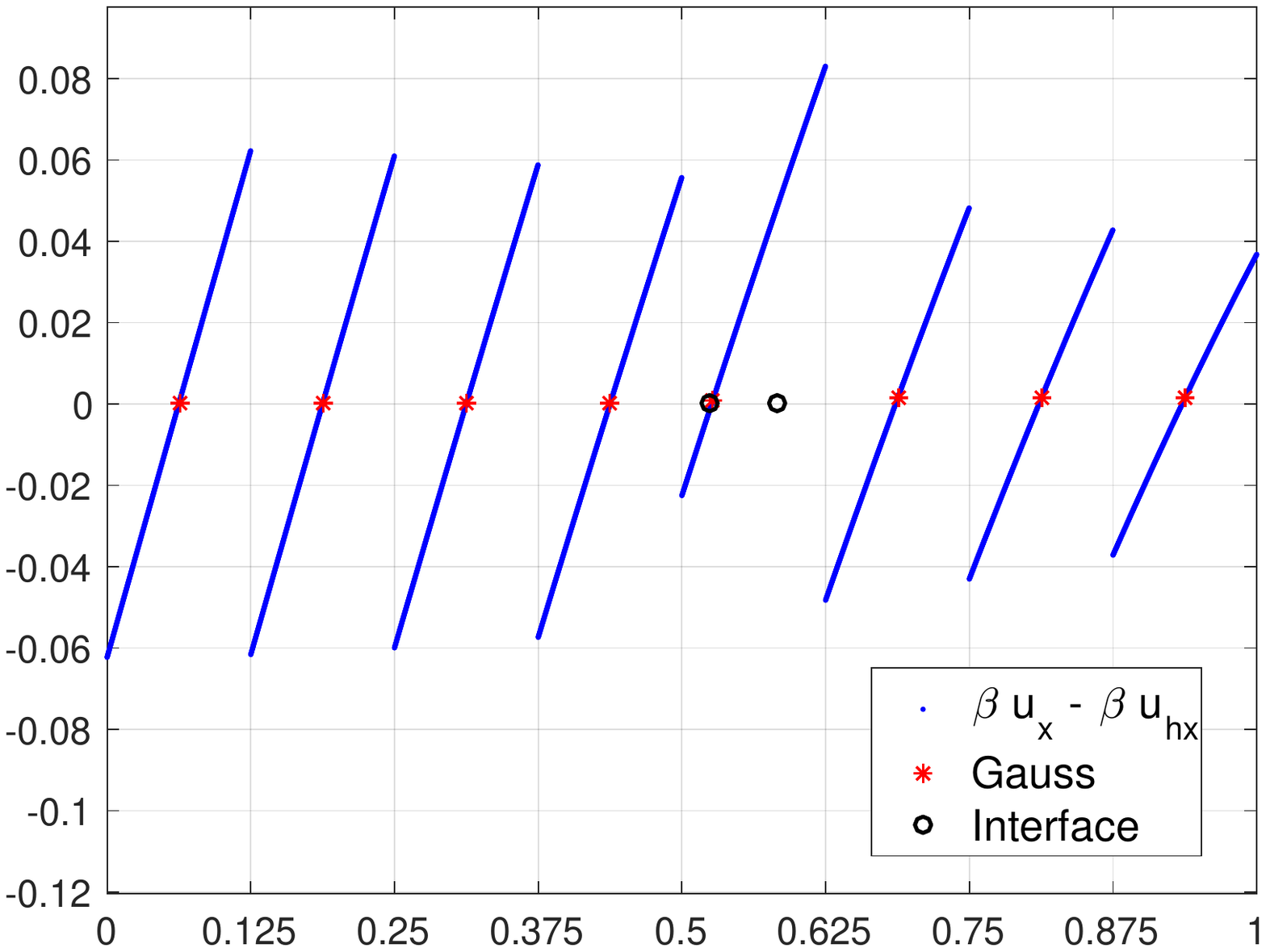}
  \caption{Error and flux error of $P_1$ IFE solution. $\beta=\{1,5,100\}$, $\alpha = \{\dfrac{\pi}{6},\dfrac{\pi}{6}+0.06\}$}
  \label{fig: error P1 IFE 2pt}
\end{figure}

\begin{figure}[htb]
  \centering
  \includegraphics[width=.48\textwidth]{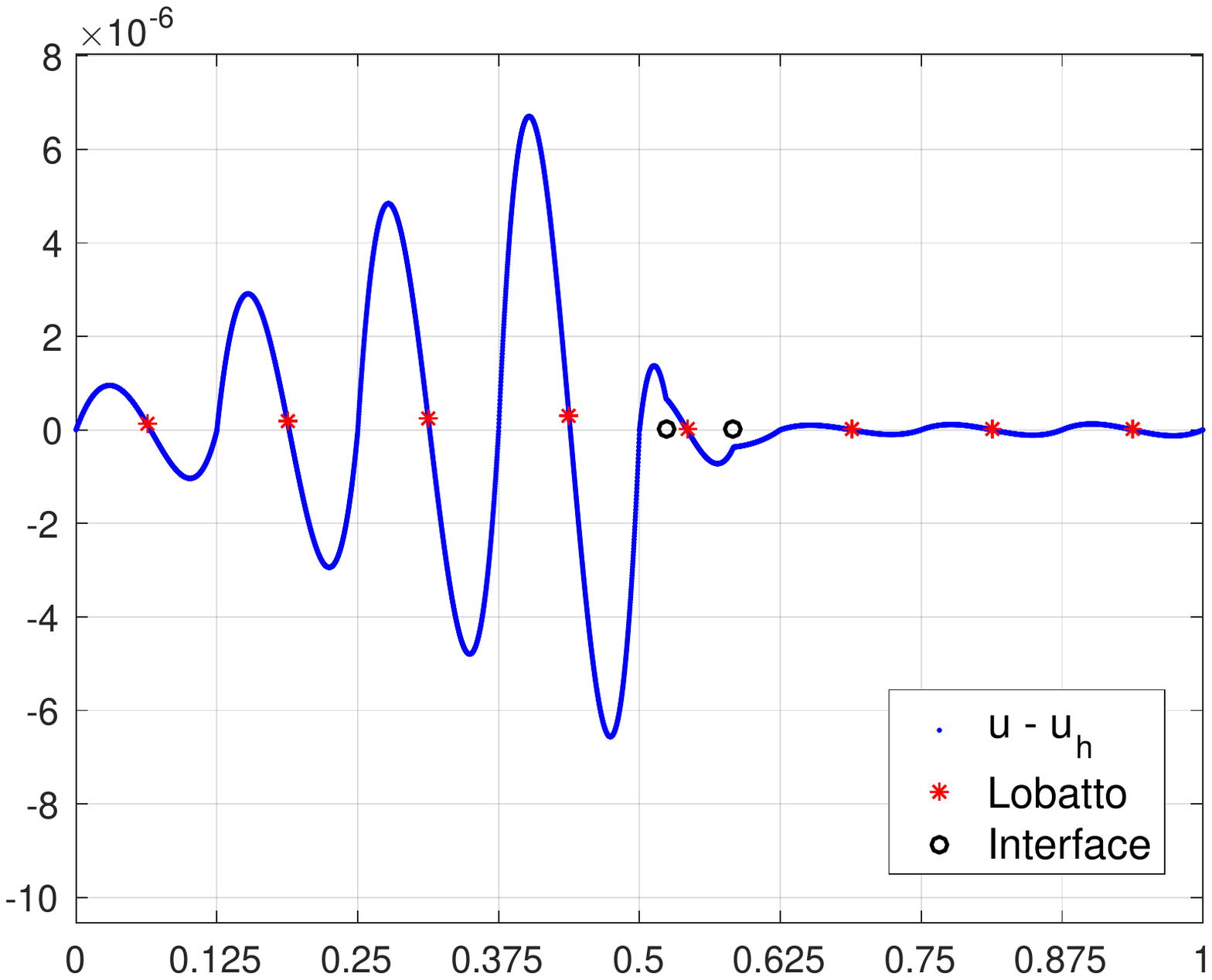}~
  \includegraphics[width=.48\textwidth]{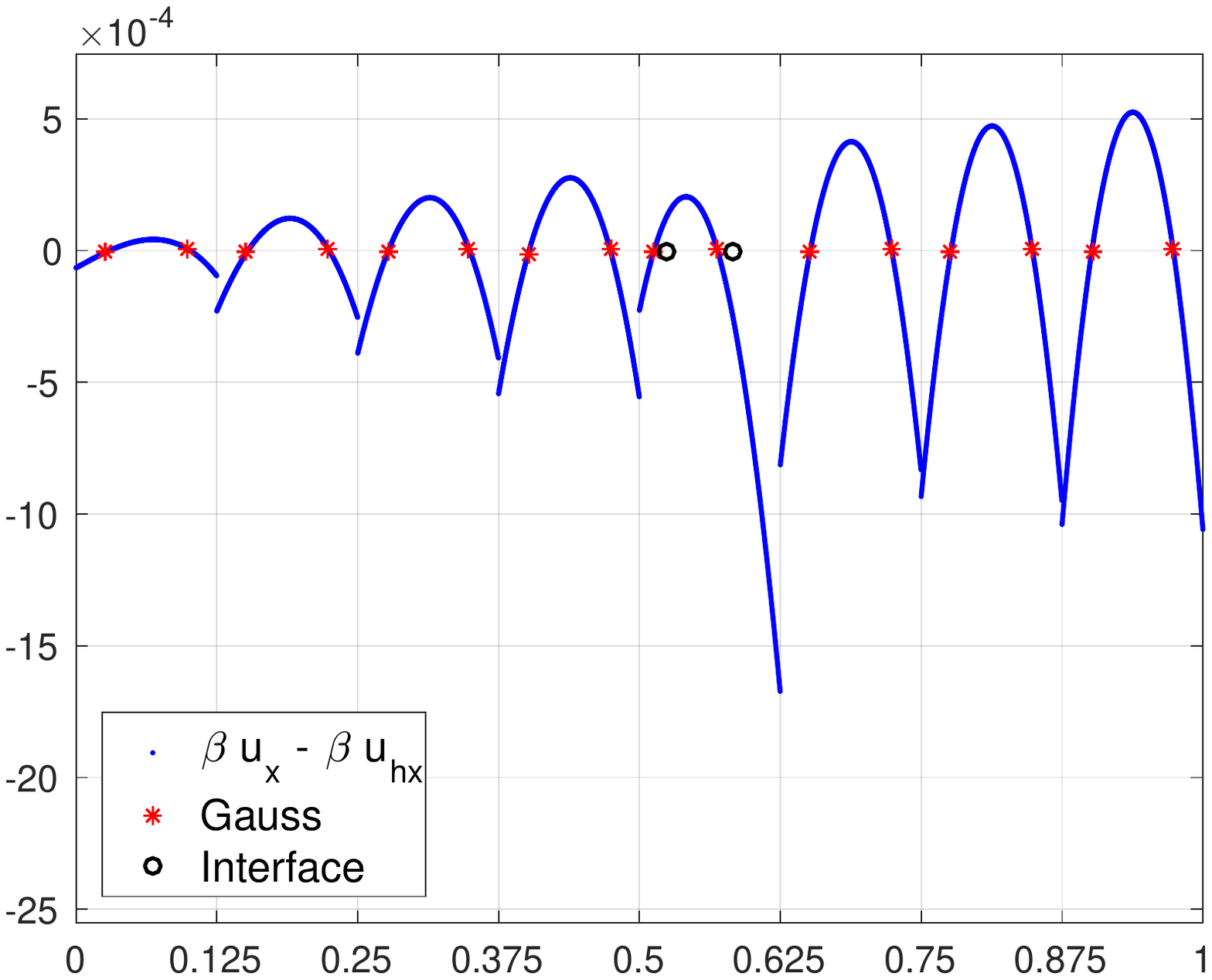}
  \caption{Error and flux error of $P_2$ IFE solution. $\beta=\{1,5,100\}$, $\alpha = \{\dfrac{\pi}{6},\dfrac{\pi}{6}+0.06\}$}
  \label{fig: error P2 IFE 2pt}
\end{figure}

\begin{figure}[htb]
  \centering
  \includegraphics[width=.48\textwidth]{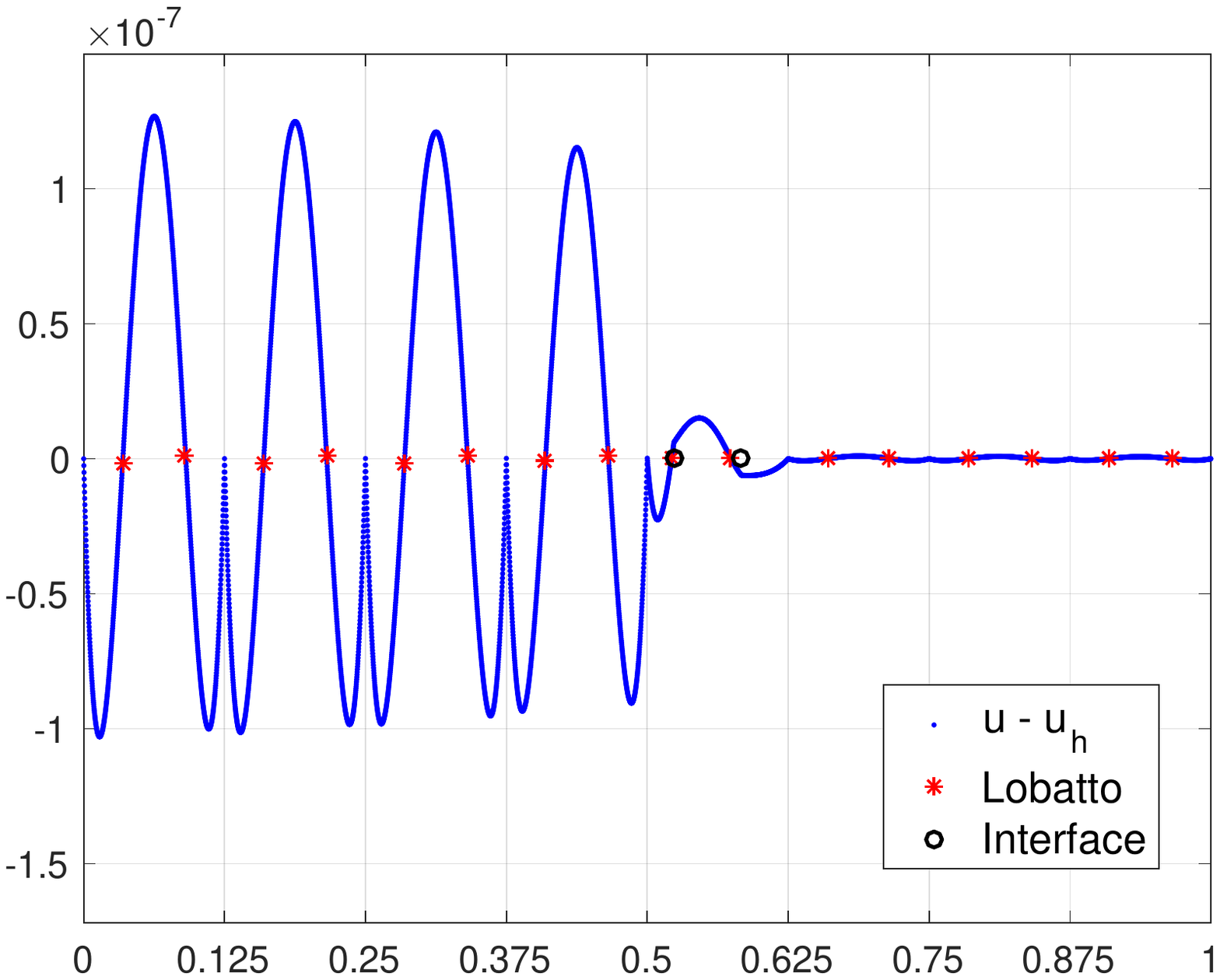}~
  \includegraphics[width=.48\textwidth]{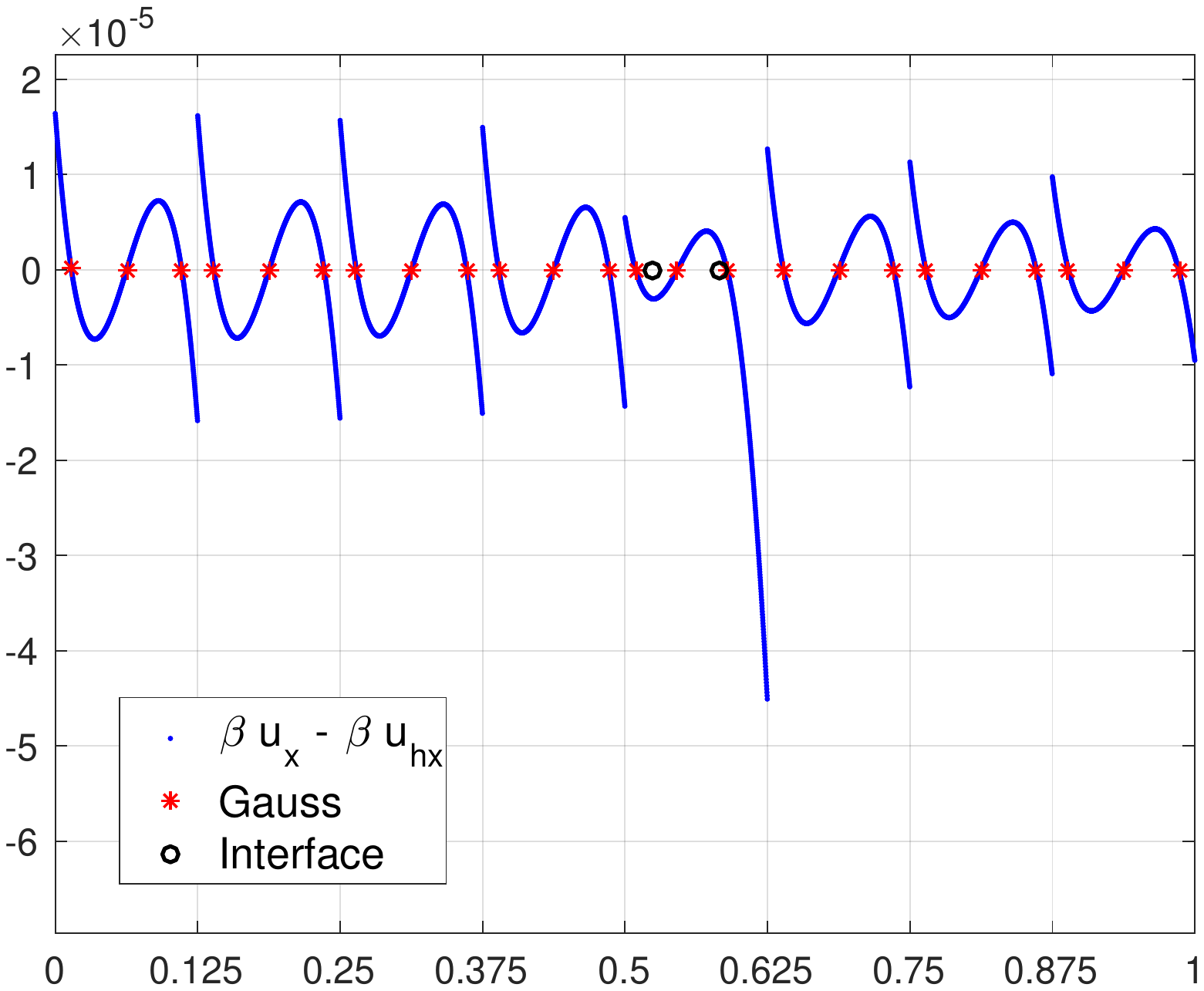}
  \caption{Error and flux error of $P_3$ IFE solution. $\beta=\{1,5,100\}$, $\alpha = \{\dfrac{\pi}{6},\dfrac{\pi}{6}+0.06\}$}
  \label{fig: error P3 IFE 2pt}
\end{figure}

Tables \ref{table: general elliptic 2pt P1-IFE} - \ref{table: general elliptic 2pt P3} report the numerical errors and convergence rates in different norms. Figures \ref{fig: error P1 IFE 2pt} - \ref{fig: error P3 IFE 2pt} demonstrate the superconvergence behavior on the roots of generalized Lobatto/Legendre polynomials. We note that, on the coarsest mesh which contains $8$ elements, the interface element contains two interface points. As the mesh size becomes smaller, the interface points are separated in different elements. This example shows the robustness of our scheme with respect to multiple coefficient discontinuities.

The numerical results for diffusion (only) interface problems are similar, except at mesh points there are only roundoff errors. We also conducted numerical experiments for different configuration of  interface locations $\alpha$, and different sets of coefficients $\beta^\pm$, including large coefficient contrast. Similar superconvergence properties have been observed as the exemplified examples, hence we omit these data in the article.

\section{Conclusion}
In this article, we developed explicitly, the orthogonal IFE basis functions. First we constructed a set of bases for flux using (generalized) Legendre polynomials, then integrate to obtain basis functions for the primary unknown. The procedure is somewhat ``reversed" from the classical approach in constructing IFE basis functions.
The superconvergence behavior has been observed and proved for general elliptic interface problems in the one dimensional setting. At the roots of generalized Lobatto polynomial of degree $p+1$, the IFE solution is superconvergent to the exact solution with order $p+2$ (comparing with the optimal order $p+1$); at the roots of generalized Legendre polynomial of degree $p$, the derivative of the IFE solution is superconvergent to the derivative of the exact solution with order $p+1$ (comparing with the optimal order $p$). In addition, the convergent rate at all mesh points (including those of the interface element) is of order $p+2$ (comparing with the optimal order $p+1$). The idea presented in this article seems extendable to the two dimensional elliptic interface problems (at least for the tensor-product space case), which will be of interesting in future work.


\begin{thebibliography}{99}

\bibitem{2007AdjeridLin}
S.~Adjerid and T.~Lin.
\newblock Higher-order immersed discontinuous {G}alerkin methods.
\newblock {\em Int. J. Inf. Syst. Sci.}, 3(4): 555--568, 2007.

\bibitem{2009AdjeridLin}
S.~Adjerid and T.~Lin.
\newblock A {$p$}-th degree immersed finite element for boundary value problems
  with discontinuous coefficients.
\newblock {\em Appl. Numer. Math.}, 59(6): 1303--1321, 2009.

\bibitem{Adjerid;Massey2006}
S.  Adjerid and T. C. Massey,
\newblock { Superconvergence of discontinuous Galerkin solutions for a nonlinear scalar hyperbolic problem,}
\newblock  {\em Comput. Methods Appl. Mech. Engrg.}, 195: 3331--3346, 2006.

\bibitem{2001BabuskaStrouboulis}
I.~Babu{\v{s}}ka and T.~Strouboulis.
\newblock {\em The finite element method and its reliability}.
\newblock Numerical Mathematics and Scientific Computation. The Clarendon
  Press, Oxford University Press, New York, 2001.

\bibitem{Babuska1996}
I. Babu$\check{s}$ka, T. Strouboulis, C. S. Upadhyay, and S.K.
Gangaraj,
\newblock  Computer-based proof of the existence of superconvergence points in the finite
element method: superconvergence of the derivatives in finite
element solutions of Laplace's, Poisson's, and the elasticity
equations,
\newblock {\em Numer. Meth. PDEs.}, 12: 347--392, 1996.

\bibitem{Bramble.Schatz.math.com} J. Bramble and A. Schatz,
\newblock High order local accuracy by averaging in the finite element
method,
\newblock {\em Math. Comp.}, 31:  94--111, 1997.

\bibitem{1994BrennerScott} S. C. Brenner and S. L. Ridgway,
\newblock The mathematical theory of finite element methods, Texts in Applied Mathematics, 15
\newblock {\em Springer-Verlag, New York}, 1994.



\bibitem{Cai.Z1991}
Z. Cai,
\newblock On the finite volume element method,
\newblock {\em Numer. Math.}, 58: 713--735, 1991.

\bibitem{2006CampLinLinSun}
B.~Camp, T.~Lin, Y.~Lin, and W.~Sun.
\newblock Quadratic immersed finite element spaces and their approximation
  capabilities.
\newblock {\em Adv. Comput. Math.}, 24(1-4): 81--112, 2006.

\bibitem{Cao-Shu-zhang-Yang2D} W. Cao, C.-W. Shu, Y. Yang, and Z.
Zhang,
\newblock{Superconvergence of discontinuous Galerkin methods for 2-D
hyperbolic equations},
\newblock{\em SIAM. J. Numer. Anal.,}  53: 1651--1671, 2015.

\bibitem{Cao;zhang:supLDG2k+1}
 W. Cao and Z. Zhang,
\newblock{Superconvergence of Local Discontinuous Galerkin method for one-dimensional linear parabolic equations,}
\newblock  {\em Math. Comp.}, 85: 63--84, 2016.

\bibitem{Cao;Zhang;Zou2012}
 W. Cao, Z. Zhang, and Q. Zou,
\newblock{Superconvergence of any order finite volume schemes for 1D general elliptic equations,}
\newblock  {\em J. Sci. Comput.},  56: 566--590, 2013.


\bibitem{Cao;zhang;zou:2k+1}
 W. Cao, Z. Zhang, and Q. Zou,
\newblock{Superconvergence of Discontinuous Galerkin method for linear hyperbolic equations,}
\newblock {\em SIAM J. Numer. Anal.}, 52: 2555--2573, 2014.


\bibitem{Cao;zhang;zou:2kFVM}
W. Cao, Z. Zhang, and Q. Zou,
\newblock{Is $2k$-conjecture valid for finite volume methods?,}
\newblock{\em SIAM J. Numer. Anal.},  53(2): 942--962, 2015.


\bibitem{Chen.C.M2012}
C. Chen and S. Hu,
\newblock {The highest order superconvergence for bi-$k$ degree rectangular elements at nodes- a proof of $2k$-conjecture,}
\newblock  {\em Math. Comp.}, 82: 1337--1355, 2013.

\bibitem{Chou_Ye2007}
 S. Chou and X. Ye,
\newblock{ Superconvergence of finite volume methods for the second
order elliptic problem,}
\newblock{\em Comput. Methods Appl. Mech. Eng.}, 196:
3706-3712, 2007.

\bibitem{2010ChouKwakWee}
S.-H. Chou, D.~Y. Kwak, and K.~T. Wee.
\newblock Optimal convergence analysis of an immersed interface finite element
  method.
\newblock {\em Adv. Comput. Math.}, 33(2):149--168, 2010.

\bibitem{1974DouglasDupont}
J.~Douglas, Jr. and T.~Dupont.
\newblock Galerkin approximations for the two point boundary problem using
  continuous, piecewise polynomial spaces.
\newblock {\em Numer. Math.}, 22: 99--109, 1974.

\bibitem{2007GongLiLi}
Y.~Gong, B.~Li, and Z.~Li.
\newblock Immersed-interface finite-element methods for elliptic interface
  problems with nonhomogeneous jump conditions.
\newblock {\em SIAM J. Numer. Anal.}, 46(1): 472--495, 2007/08.


\bibitem{Guo_zhong_Qiu2013}W. Guo, X. Zhong and J. Qiu,
\newblock  Superconvergence of discontinuous Galerkin and local discontinuous Galerkin methods: eigen-structure analysis based on Fourier approach,
\newblock {\em J. Comput. Phys.}, 235: 458--485, 2013.

\bibitem{2008HeLinLin}
X.~He, T.~Lin, and Y.~Lin.
\newblock Approximation capability of a bilinear immersed finite element space.
\newblock {\em Numer. Methods Partial Differential Equations},
  24(5): 1265--1300, 2008.

\bibitem{2013HeLinLinZhang}
X.~He, T.~Lin, Y.~Lin, and X.~Zhang.
\newblock Immersed finite element methods for parabolic equations with moving
  interface.
\newblock {\em Numer. Methods Partial Differential Equations}, 29(2): 619--646,
  2013.

\bibitem{2005KafafyLinLinWang}
R.~Kafafy, T.~Lin, Y.~Lin, and J.~Wang.
\newblock Three-dimensional immersed finite element methods for electric field
  simulation in composite materials.
\newblock {\em Internat. J. Numer. Methods Engrg.}, 64(7): 940--972, 2005.

\bibitem{Neittaanmaki1987}
 M. K$\breve{r}$i$\breve{z}$ek and P. Neittaanm$\ddot{a}$ki,
\newblock On superconvergence techniques,
{\em Acta Appl. Math.}, 9: 175--198, 1987.

\bibitem{1998Li}
Z.~Li.
\newblock The immersed interface method using a finite element formulation.
\newblock {\em Appl. Numer. Math.}, 27(3): 253--267, 1998.

\bibitem{2004LiLinLinRogers}
Z.~Li, T.~Lin, Y.~Lin, and R.~C. Rogers.
\newblock An immersed finite element space and its approximation capability.
\newblock {\em Numer. Methods Partial Differential Equations}, 20(3): 338--367,
  2004.

\bibitem{2003LiLinWu}
Z.~Li, T.~Lin, and X.~Wu.
\newblock New {C}artesian grid methods for interface problems using the finite
  element formulation.
\newblock {\em Numer. Math.}, 96(1): 61--98, 2003.

\bibitem{2013LinLinZhang2}
T.~Lin, Y.~Lin, and X.~Zhang.
\newblock Immersed finite element method of lines for moving interface problems
  with nonhomogeneous flux jump.
\newblock In {\em Recent advances in scientific computing and applications},
  volume 586 of {\em Contemp. Math.}, pages 257--265. Amer. Math. Soc.,
  Providence, RI, 2013.

\bibitem{2013LinLinZhang1}
T.~Lin, Y.~Lin, and X.~Zhang.
\newblock A method of lines based on immersed finite elements for parabolic
  moving interface problems.
\newblock {\em Adv. Appl. Math. Mech.}, 5(4): 548--568, 2013.

\bibitem{2015LinLinZhang}
T.~Lin, Y.~Lin, and X.~Zhang.
\newblock Partially penalized immersed finite element methods for elliptic
  interface problems.
\newblock {\em SIAM J. Numer. Anal.}, 53(2): 1121--1144, 2015.

\bibitem{2013LinSheenZhang}
T.~Lin, D.~Sheen, and X.~Zhang.
\newblock A locking-free immersed finite element method for planar elasticity
  interface problems.
\newblock {\em J. Comput. Phys.}, 247: 228--247, 2013.

\bibitem{2015LinYangZhang1}
T.~Lin, Q.~Yang, and X.~Zhang.
\newblock {\it A Priori} error estimates for some discontinuous {Galerkin}
  immersed finite element methods.
\newblock {\em J. Sci. Comput.}, 65(3): 875--894, 2015.

\bibitem{2015LinYangZhang2}
T.~Lin, Q.~Yang, and X.~Zhang.
\newblock Partially penalized immersed finite element methods for parabolic
  interface problems.
\newblock {\em Numer. Methods Partial Differential Equations},
  31(6):1925--1947, 2015.

\bibitem{Schatz.Wahlbin1996}
 A. H. Schatz, I. H. Sloan and L. B. Wahlbin,
\newblock { Superconvergence in  finite element methods and meshes which are symmetric with respect to a point},
\newblock {\em SIAM J. Numer. Anal.}, 33: 505--521, 1996.

\bibitem{2011ShenTangWang}
J.~Shen, T.~Tang, and L.-L. Wang.
\newblock {\em Spectral methods}, volume~41 of {\em Springer Series in
  Computational Mathematics}.
\newblock Springer, Heidelberg, 2011.
\newblock Algorithms, analysis and applications.

\bibitem{V.Thomee.math.comp}
 V. Thomee,
\newblock High order local approximation to derivatives in the finite element method,
\newblock {\em Math. Comp.}, 31: 652--660, 1997.

\bibitem{2010VallaghePapadopoulo}
S.~Vallagh{\'e} and T.~Papadopoulo.
\newblock A trilinear immersed finite element method for solving the
  electroencephalography forward problem.
\newblock {\em SIAM J. Sci. Comput.}, 32(4): 2379--2394, 2010.

\bibitem{1995Wahlbin}
L.~B. Wahlbin.
\newblock {\em Superconvergence in {G}alerkin finite element methods}, volume
  1605 of {\em Lecture Notes in Mathematics}.
\newblock Springer-Verlag, Berlin, 1995.

\bibitem{Xie;Zhang2012} Z. Xie and Z. Zhang,
\newblock Uniform superconvergence analysis of the discontinuous Galerkin method for a singularly perturbed problem in 1-D,
\newblock {\em Math. Comp.}, 79: 35--45, 2010.

\bibitem{Xu.J.Zou.Q2009}
J. Xu and Q. Zou,
\newblock Analysis of linear and quadratic simplitical finite volume methods for elliptic equations,
\newblock {\em Numer. Math.}, 111: 469--492, 2009.

\bibitem{Yang;Shu:SIAM2012} Y. Yang and C.-W. Shu,
\newblock Analysis of optimal superconvergence of discontinuous Galerkin method for linear hyperbolic equations,
\newblock {\em SIAM J. Numer. Anal.}, 50: 3110--3133, 2012.

\bibitem{zhang1} Z. Zhang,
\newblock Superconvergence of Spectral collocation and p-version methods in one dimensional problems,
\newblock {\em Math. Comp.}, 74: 1621--1636, 2005.

\bibitem{zhang2} Z. Zhang,
\newblock Superconvergence of a Chebyshev spectral collocation method,
\newblock {\em Journal of Scientific Computing}, 34: 237--246, 2008.

\bibitem{zhang3} Z. Zhang,
\newblock Superconvergence points of polynomial spectral interpolation,
\newblock {\em SIAM J. Numer. Anal.}, 50: 2966--2985, 2012.

\bibitem{Zienkiewicz-Cheung} O. C. Zienkiewicz and Y. K. Cheung,
\newblock {\em The Finite Element Method in Structural and Continuum Mechanics:
Numerical Solution of Problems in Structural and Continuum Mechanics Vol. 1},
\newblock European Civil Engineering Series, McGraw-Hill, 1967.

\end{thebibliography}
\section*{Acknowledgement}
The authors would like to thank Prof. Tao Lin for his valuable suggestions on this article.

\section*{Reference}

\end{document}